\newcolumntype{M}[1]{>{\centering\arraybackslash}m{#1}} %define dimension for long stable
\newcommand{\norm}[1]{\left\lVert#1\right\rVert}%mathcal letters
\newcommand\sL{{\mathscr L}}
\newcommand{\Rmnum}[1]{\expandafter\@slowromancap\romannumeral #1@}
\theoremstyle{plain}
\newtheorem{thm}{Theorem}[section]
\newtheorem{lem}[thm]{Lemma}
\newtheorem{prop}[thm]{Proposition}
\newtheorem{cor}[thm]{Corollary}
\newtheorem{defn}[thm]{Definition}
\theoremstyle{definition}
\newtheorem{example}[thm]{Example}
\newtheorem{rem}[thm]{Remark}
\newcommand{\btheorem}{\begin{thm}}
    \newcommand{\etheorem}{\end{thm}}
\newcommand{\bproposition}{\begin{prop}}
    \newcommand{\eproposition}{\end{prop}}
\newcommand{\bdefinition}{\begin{defn}}
    \newcommand{\edefinition}{\end{defn}}
\newcommand{\bcorollary}{\begin{cor}}
    \newcommand{\ecorollary}{\end{cor}}
\newcommand{\bproof}{\begin{proof}}
    \newcommand{\eproof}{\end{proof}}
\newcommand{\bremark}{\begin{remark}}
    \newcommand{\eremark}{\end{remark}}
\newcommand{\eexample}{\end{example}}
\newcommand{\bexample}{\begin{example}}
\newcommand{\elemma}{\end{lemma}}
\newcommand{\blemma}{\begin{lemma}}
\newcommand{\la}{\langle}
\newcommand{\ra}{\rangle}
\renewcommand{\bar}{\overline}
\renewcommand{\phi}{\varphi}
\newcommand{\beq}{\begin{equation}}
\newcommand{\eeq}{\end{equation}}
\newcommand{\ee}{\end{eqnarray*}}
\newcommand{\be}{\begin{eqnarray*}}
\newcommand{\bd}{\begin{enumerate}}
    \newcommand{\ed}{\end{enumerate}}
\renewcommand{\tilde}{\widetilde}
\newcommand{\thmref}[1]{Theorem~\ref{#1}}
\newcommand{\lemref}[1]{Lemma~\ref{#1}}
\newcommand{\corref}[1]{Corollory~\ref{#1}}
\newcommand{\R}{{\mathbb R}}
\newcommand{\di}{{\rm div}}
\newcommand{\ric}{\mathrm{Ric}}
\newcommand{\vol}{{\rm{Vol}}}%%%%%%%%%%%%%%%%%%%%%%%%%%%%%
\renewcommand{\#}{\sharp}
\newcommand{\lrp}[1]{\left( #1\right)}
\newcommand{\lrb}[1]{\left\{ #1\right\}}
\newcommand{\abs}[1]{\lvert#1\rvert}
\setlist[itemize]{leftmargin=*}
\setlist[enumerate]{leftmargin=*}
\numberwithin{equation}{section} %numbering of equations
\title{Quasi-linear equation $\Delta_pv+av^q=0$ on manifolds with integral bounded Ricci curvature and geometric applications}
\author{Youde Wang}
\author{Guodong Wei}
\author{Liqin Zhang}
\address{Youde Wang, 1. Hua Loo-Keng Key Laboratory of Mathematics, Institute of Mathematics, Academy of Mathematics and Systems Science, Chinese Academy of Sciences, Beijing 100190, China;
		2. School of Mathematical Sciences, University of Chinese Academy of Sciences, Beijing 100049, China.}
\email{wyd@math.ac.cn}
\address{Guodong Wei, School of Mathematics (Zhuhai), Sun Yat-sen University, Zhuhai, Guangdong, 519082, P. R. China}
\email{weigd3@mail.sysu.edu.cn}
\address{Liqin Zhang, School of Mathematics and Information Sciences, Guangzhou University, 510006, P. R. China}
\email{1061837643@qq.com}
\begin{document}
\renewcommand{\subjclassname}{
 \textup{2010} Mathematics Subject Classification}
\subjclass[2010]{Primary 53C21; Secondary 53C20}

    \begin{abstract}  
    We study nonexistence results and gradient estimates for solutions of
\[
\Delta_p v + a v^{q}=0
\]
defined on complete Riemannian manifolds satisfying a \emph{$\chi$-type Sobolev inequality}. We establish a Liouville theorem under the assumptions that the underlying manifold $(M,g)$ supports a \emph{$\chi$-type Sobolev inequality} and that the $L^{\frac{\chi}{\chi-1}}$-norm of $\ric_-(x)$ is bounded above by a constant depending only on $\dim(M)$, the Sobolev constant $\mathbb{S}_\chi(M)$, and the volume growth rate of geodesic balls $B_r\subset M$. This extends and improves several recent results of Ciraolo, Farina, and Polvara \cite{CFP}; our approach, however, differs from their ``$P$-function'' method. In addition, for manifolds satisfying a \emph{$\chi$-type Sobolev inequality}, we obtain a lower bound on the volume growth of geodesic balls. We also derive a local logarithmic gradient estimate for positive solutions, assuming that $\ric_-(x)\in L^\gamma$ for some $\gamma > \frac{\chi}{\chi-1}$.

Some geometric and topological applications of our main result are also presented in this article (see \thmref{end}, \thmref{main4}, and \corref{main5}). In particular, we prove the following. Let $(M,g)$ be a complete noncompact Riemannian manifold of dimension $n\ge 3$ on which the Sobolev inequality \eqref{chi-n} holds, and assume that $\ric(x)\ge 0$ outside some geodesic ball $B(o,R_0)$. Then there exists a positive constant $C(n)$, depending only on $n$, such that if
\[
\|\ric_-\|_{L^{\frac{n}{2}}}\leq C(n)\,\mathbb{S}_{\frac{n}{n-2}}(M),
\]
then $(M,g)$ has exactly one end.    \end{abstract}

    \maketitle

    \setcounter{tocdepth}{1}
{\small{    \begin{spacing}{1.08} \tableofcontents %
            % \dottedcontents{chapter}[1.3cm]{}{3em}{5pt}
            \dottedcontents{section}[1cm]{}{2em}{5 pt} %
            % \dottedcontents{subsection}[2cm]{}{3em}{5pt}
\end{spacing} }}
\section{Introduction}
%Gradient estimate is a fundamental and powerful technique in the study of partial differential equations on Riemannian manifolds. For instance, one can use gradient estimates to derive Harnack inequalities, to deduce Liouville type theorems, to study the geometry of manifolds, etc. Many mathematicians pay attention to the study on this topic (see for example,  \cite{MR615628, MR834612,  peng2020gradient, MR2880214, MR431040, MR3866881} and the references there in).
	
In this paper we are concerned with the following quasi-linear equation
\begin{align}\label{equ0}
\Delta_pv +av^{q}=0
\end{align}
defined on a complete Riemannian manifold $(M,g)$ which supports a Sobolev inequality, where $p>1$, $a$, $q\in\R$ are constants, and the $p$-Laplacian operator is defined as
$$\Delta_p(v)=\di(|\nabla v|^{p-2}\nabla v).$$
In the case $a=1$ and $p=2$, equation \eqref{equ0} reduces to the well-known semilinear equation
\begin{equation}\label{se}
\Delta v + v^{q}=0,
\end{equation}
commonly referred to as the Lane-Emden equation. This equation arises in various branches of mathematics, such as the prescribed scalar  curvature problem (for $q=(n+2)/(n-2)$, cf.~e.g. \cite{MR788292, MR929283}), the scalar field equation (cf. \cite{BL}), the stationary solutions to Euler's equation on $\mathbb{S}^2$ (cf.~\cite{CCKW, CG}) and has been studied extensively in the last half century (cf. e.g. \cite{ADW, MR1134481, CM1, CM2, DKW, DN2, GG1, GG2, BJ, GS, lf, PWW1, N, Z}).

The study on the existence and non-existence of positive solutions to the equation \eqref{equ0} and \eqref{se} is rather subtle. It was proved by Gidas and Spruck in \cite{BJ} that any nonnegative solutions to \eqref{se} with $1< q <(n+2)/(n-2)$ on a Riemannian manifold of nonnegative Ricci curvature is zero. Combining the results in \cite{MR4559367, BJ}, we know this result actually holds for $-\infty< q <(n+2)/(n-2)$. On the other hand, Ding-Ni proved in \cite{DN1} that for any $b>0$, there exists a positive solution to \eqref{se} defined in $\mathbb{R}^n$ with $q\geq (n+2)/(n-2)$ such that $\|v\|_{L^\infty}=b$. Moreover, Cafarelli-Gidas-Spruck \cite{CGS} proved that all positive solutions to equation \eqref{se} in $\R^n$ with critical power $q=\frac{n+2}{n-2}$ are radial, and can be explicitly written as
\begin{align}\label{eq:solution-2}
u(x)=\left(a+b\abs{x-x_0}^{2}\right)^{-\frac{n-2}{2}},\quad\quad n\left(n-2\right)ab=1,
\end{align}
where $x_0\in\mathbb{R}^n$, $a>0$ and $b>0$ (cf.~\cite{MR1121147} for the two dimensional case). We refer to \cite{CM1, SunW} and \cite{CFP} for recent development of classification results for equation \eqref{se}.

Now, we turn our attention back to the equation \eqref{equ0}. When $a = 0$, this equation becomes the $p$-Laplacian equation
\begin{align}\label{plaplace}
\Delta_pv=0, \quad p>1.
\end{align}
The renowned Cheng-Yau's logarithm gradient estimate showed that when $p=2$, any solution bounded from above or below to \eqref{plaplace} is a constant provided the Ricci curvature of the Riemannian manifold is nonnegative (cf.~\cite{CY}). Kotschwar-Ni \cite{MR2518892} proved that any positive $p$-harmonic function on a complete Riemannian manifold with nonnegative sectional curvature is a positive constant. Subsequently, this result was verified to remain correct by Wang and Zhang \cite{WZ} under the condition of nonnegative Ricci curvature, where the authors applied the Nash-Moser iteration technique and Saloff-Coste's Sobolev inequality (cf.~\cite[Theorem 3.1]{S}) to study the gradient estimates of equation \eqref{plaplace}. This gradient estimate was later refined to be sharp by Sung and Wang \cite{MR3275651}.
		
When $q\neq p-1$ and  $a>0$, the constant $a$ can be absorbed by a dilation transformation, hence equation (\ref{equ0}) can be reduced to the classical Lane-Emden-Fowler (or Emden-Fowler) equation
\begin{align}\label{equa:1.2}
\Delta_pv +v^{q}=0,
\end{align}
which appears naturally on fluid mechanics and conformal geometry and has been widely studied in the literature (cf.~\cite{BL, MR1004713, MR1134481, HWW, MR829846, O, MR1946918, MR3336621} and the references therein). It was proved by Serrin and Zou in \cite{MR1946918} that if $1<p<n$ and $q>0$, then equation \eqref{equa:1.2} defined on $\R^n$ admits no positive solution if and only if
$$ 0<q<np/(n-p)-1.$$
By employing the Nash-Moser iteration method, He and the first two named authors of the present paper \cite{HWW} showed that there is no positive solution of \eqref{equ0} defined on a complete Riemannian manifold of nonnegative Ricci curvature with
$$a>0 \quad \&\quad  q<\frac{n+3}{n-1}(p-1)\quad \text{or} \quad a<0 \quad \& \quad  q>p-1.$$	
Especially, in the case $p=2$, $a>0$ and $q\in (-\infty,\, \frac{n+2}{n-2})$, Lu \cite{Lu} established the Cheng-Yau type logarithm gradient estimate for positive solutions to Lane-Emden equation \eqref{se} on a complete Riemannian manifold with Ricci curvature bounded from below.

Recently, He, Sun and the first named author of this paper \cite{HSW} showed there is no positive solutions to the subcritical Lane-Emden-Fowler equations(i.e., \eqref{equa:1.2} with
$-\infty< q <\frac{np}{(n-p)_+}-1$),
over complete Riemannian manifolds with nonnegative Ricci curvature, thereby deriving the optimal Liouville theorems for such equations.

Numerous mathematicians have also studied differential inequalities on a complete manifold $(M, g)$, such as
\begin{align}\label{eq:lane-emden-3}
\Delta_pu + u^{q}\leq 0.
\end{align}
Grigor'yan and Sun \cite{GS} and Zhang \cite{Z} investigated the uniqueness of a nonnegative solution to this inequality for $p=2$. Notably, they utilized the condition of volume growth instead of relying on nonnegative Ricci curvature. Similar results hold for general $p>1$ (cf.~\cite{MR3336621}).
%That is, there exists no positive solution to the inequality \eqref{eq:lane-emden-3} provided $q>p-1$ and for some fixed point $o\in M^n$ and some %positive number $C$
%\begin{align*}
%\mathrm{Vol}\left(B_{R}(o)\right)\leq C R^{\frac{pq}{q+1-p}}\ln^{\frac{p-1}{q+1-p}}R,\quad\forall R\gg 1.
%\end{align*}

Very recently, Ciraolo-Farina-Polvara \cite{CFP} studied the Liouville theorem for positive solutions to \eqref{equa:1.2} defined on a manifold associated with a so-called {\it $\chi$-type Sobolev inequality}. In order to introduce their results, we first clarify the definition of this inequality.

\begin{defn}
Let $\left(M^n,g\right)$ be an $n$-dimensional Riemannian manifold. We say the {\it $\chi$-type Sobolev inequality} holds on $(M,g)$, if $\chi>1$ and there exists a positive constant $\mathbb{S}_{\chi}(M)>0$ such that for any $f\in C^{\infty}_0(M,g)$, there holds
\begin{equation}\label{chi}
\mathbb{S}_{\chi}(M)\left(\int_M f^{2\chi}dv\right)^{\frac{1}{\chi}}\leq\int_M |\nabla f|^2 dv.
\end{equation}
\end{defn}

It is well known that \eqref{chi} holds for a large class of Cartan--Hadamard manifolds on which Poincar\'e inequalities are valid; for details, we refer to \cite{HE}. We also note that if a \emph{$\chi$-type Sobolev inequality} holds on $(M,g)$ with $\dim(M)=n\ge 3$, then necessarily $\chi\le \frac{n}{n-2}$ (see Theorem \ref{leq}). Therefore, throughout the remainder of this article we assume $\chi\in\bigl(1,\frac{n}{n-2}\bigr]$ when $\dim(M)\ge 3$.

Clearly, \eqref{chi} reduces to the classical Sobolev inequality when $\chi=\frac{n}{n-2}$, namely,
\begin{equation}\label{chi-n}
\mathbb{S}_{\frac{n}{n-2}}(M)\left(\int_M f^{\frac{2n}{n-2}}\,dv\right)^{\frac{n-2}{n}}
\leq \int_M |\nabla f|^2\,dv,
\end{equation}
which holds for a broad class of complete Riemannian manifolds. In recent years, significant effort has been devoted to optimal Sobolev inequalities on Riemannian manifolds; see, for instance, the survey \cite{DruHeb02AB} and the references therein.

In the sequel, we will use the notation $\ric_{-}(\,\cdot\,)$ which is defined as
$$\ric_-(x)=\max_{|v|=1,\, v\in T_xM}\left\{0,\, -\ric_x(v,v)\right\}, \quad \forall x\in M.$$
%{\color{red} omit? Here $\ric_x: T_xM \to T_xM$ denotes the Ricci curvature tensor at $x\in M$ with the equipped Riemannian metric $g$.}

Recently, Ciraolo, Farina, and Polvara (cf.~\cite{CFP}) proved the following results, which may be viewed as a generalization of the classical Gidas--Spruck's theorem for the semilinear equation $\Delta u + u^q = 0$ on Riemannian manifolds with nonnegative Ricci curvature to the setting where the Ricci curvature is only bounded in an integral sense.

\begin{thm}[Theorem 1.5 in \cite{CFP}]\label{cfp}
Let $(M,g)$ be a complete noncompact Riemannian manifold of dimension $n\geq3$ on which the \it{$\chi$-type Sobolev inequality} holds, and let $u$ be a nonnegative solution to
\begin{equation*}
\Delta u +u^q=0\ \ \ \ \text{in} \ \  M,
\end{equation*}
with $1<q<(n+2)/(n-2)$. Assume that
\begin{equation}\label{ric-}
\|\ric_-\|_{L^{\frac{\chi}{\chi-1}}}\leq\frac{1}{48}\left(\frac{n+2}{n-2}-q\right)(q-1)(n-2)\mathbb{S}_{\chi}(M)^2.
\end{equation}
Assume also that, for some fixed point $o\in M$, the volume of the geodesic ball $B(o,R)$ satisfies
\begin{equation}\label{vol1}
\vol(B(o,R))=O\left(R^{2+\frac{8}{q-1}}\right),\ \ \ \text{as}\ \ R\rightarrow \infty.
\end{equation}
Then $u$ is identical to zero.
\end{thm}

It was first observed by the first-named author \cite[Proposition 2.4]{W}, and later independently by Carron (cf.~\cite{C}) and Akutagawa (cf.~\cite{AK}), that if a Riemannian manifold $(M^n,g)$ has positive Sobolev constant, then the volume of any geodesic ball of radius $R$ satisfies
\[
\vol(B_R)\ge C R^n,
\]
where the constant $C$ depends only on $n$ and the Sobolev constant.

The first main result of the present article shows that an analogous volume growth estimate also holds for Riemannian manifolds satisfying the \emph{$\chi$-type Sobolev inequality}. Throughout the paper, we use the abbreviation $B_r$ to denote a geodesic ball of radius $r$ centered at an arbitrary point of the manifold. We now state the result precisely as follows.
\begin{thm}\label{ve}
Let $(M,g)$ be a complete noncompact Riemannian manifold on which the {$\chi$-type Sobolev inequality} holds. Then,
$$\mathrm{Vol}\left(B_r\right)\geq C\left(\chi, \mathbb{S}_{\chi}(M)\right)r^{\frac{2\chi}{\chi-1}}.$$
\end{thm}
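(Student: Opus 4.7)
The plan is to test the $\chi$-type Sobolev inequality against a distance-based Lipschitz cutoff and iterate the resulting volume recursion. Fix a center $o\in M$, set $V(R):=\vol(B_R(o))$, and consider the Lipschitz function $f(x)=(R-d(x,o))_+$; by density of $C_0^{\infty}(M)$ in the natural Sobolev space on a complete manifold, inequality \eqref{chi} applies to $f$. Since $|\nabla f|=1$ a.e.\ on $B_R(o)$ and $f\geq R/2$ on $B_{R/2}(o)$, plugging in gives the basic self-improving recursion
\begin{equation*}
\mathbb{S}_{\chi}(M)\,(R/2)^{2}\,V(R/2)^{1/\chi}\ \leq\ V(R).
\end{equation*}

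I would then renormalize by setting $W(R):=V(R)/R^{\alpha}$ where $\alpha:=2\chi/(\chi-1)$. The algebraic identity $2+\alpha/\chi=\alpha$ is designed precisely so that the recursion becomes scale-invariant, namely
\begin{equation*}
W(R)\ \geq\ K\,W(R/2)^{1/\chi},\qquad K:=\mathbb{S}_{\chi}(M)\,2^{-\alpha}.
\end{equation*}
Iterating this $k$ times yields
\begin{equation*}
W(R)\ \geq\ K^{1+\chi^{-1}+\cdots+\chi^{-(k-1)}}\cdot W(R/2^{k})^{\chi^{-k}}.
\end{equation*}
Sending $k\to\infty$ with $R$ fixed, the geometric sum converges to $\chi/(\chi-1)$, producing the desired prefactor $K^{\chi/(\chi-1)}$. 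For the tail factor, I would invoke the local Euclidean structure to get $V(r)\geq c_{o}r^{n}$ for all small $r$, which together with $n\leq\alpha$ gives $\log W(R/2^{k})\geq (\alpha-n)k\log 2+O(1)$. Since $k/\chi^{k}\to 0$ (recall $\chi>1$), dividing by $\chi^{k}$ sends this lower bound to $0$, so $W(R/2^{k})^{\chi^{-k}}\to 1$. This forces $W(R)\geq K^{\chi/(\chi-1)}$, i.e.\ the claimed bound $V(R)\geq C(\chi,\mathbb{S}_{\chi}(M))\,R^{2\chi/(\chi-1)}$ with $C=K^{\chi/(\chi-1)}=\mathbb{S}_{\chi}(M)^{\chi/(\chi-1)}\,2^{-2\chi^{2}/(\chi-1)^{2}}$; since $o$ was arbitrary this finishes the proof.

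The main delicate point, and the step I expect to be the genuine obstacle, is the passage $k\to\infty$: one must verify that $W$ evaluated on shrinking balls cannot degenerate faster than any fixed exponential rate in $k$, so that the $\chi^{-k}$-th power collapses it to $1$ rather than to something smaller. This is exactly what forces the appeal to the soft Riemannian lower bound $V(r)\geq c_{o}r^{n}$ near the origin. It is also the reason the final constant depends only on $\chi$ and $\mathbb{S}_{\chi}(M)$, not on $n$ or on $o$: any deficit from the Euclidean regime at small scales is absorbed by the iterated root, and the same $K^{\chi/(\chi-1)}$ works uniformly in the center.
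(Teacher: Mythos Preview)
Your proof is correct and follows essentially the same route as the paper's second proof: test the Sobolev inequality on the Lipschitz hat function $(R-d(\cdot,o))_+$, obtain a volume recursion between scales $R$ and $R/2$, iterate, and kill the tail with the local Euclidean asymptotic $\vol(B_r)\sim b_n r^n$. The only difference is cosmetic: the paper first passes through an intermediate H\"older step, bounding $\|f\|_{L^2}\le\|f\|_{L^{2\chi}}\vol(B_R)^{(\chi-1)/(2\chi)}$ before invoking Sobolev, which yields the recursion $\vol(B_R)\ge\big(\tfrac{R}{2}\sqrt{\mathbb{S}_\chi}\big)^{2\chi/(2\chi-1)}\vol(B_{R/2})^{\chi/(2\chi-1)}$ with contraction exponent $\chi/(2\chi-1)$ rather than your $1/\chi$. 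Your direct estimate of $\int f^{2\chi}$ from below is a bit cleaner and leads to the same $r^{2\chi/(\chi-1)}$ growth with an explicit constant. The paper also records a first, independent proof via a Nash--Moser local maximum principle for subharmonic functions (applied trivially to $u\equiv 1$), which you did not use.
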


It is worth pointing out that $\frac{2\chi}{\chi-1}=n$ when $\chi=\frac{n}{n-2}$, where $n=\dim(M)$. In this case, we recover the volume estimate obtained in \cite{W}. Moreover, it is easy to see that
\[
\frac{2\chi}{\chi-1}>2+\frac{8}{q-1}
\]
whenever $q>4\chi-3$. Consequently, for such $q$ there is a structural incompatibility between the assumptions that a \emph{$\chi$-type Sobolev inequality} holds and that the volume growth condition \eqref{vol1} is satisfied in the theorem of Ciraolo--Farina--Polvara. The main contribution of the present paper is to remove the \emph{a priori} volume growth assumption from that theorem. We now state our result precisely.

\begin{thm}\label{main}
Let $(M,g)$ be a complete noncompact Riemannian manifold on which the {\it $\chi$-type Sobolev inequality} holds. Assume that  $\vol(B(o,R))= O(R^{\beta^*})$ for some $\beta^*\geq n$ in the case $\chi=\frac{n}{n-2}$, where $B(o,R)\subset M$ is a geodesic ball centered at a fixed point $o\in M$. Then there exists a positive constant $C(n,p,q,\beta^*)$ depending on $n$, $p$, $q$ and $\beta^*$ such that, if
$$a=0\quad\text{or} \quad a>0\quad  \&\quad  q<\frac{n+3}{n-1}(p-1)\quad \text{or} \quad a<0 \quad \& \quad  q>p-1$$
and
$$\|\ric_-\|_{L^{\frac{\chi}{\chi-1}}}\leq C(n,p,q,\beta^*)\mathbb{S}_{\chi}(M),$$
then equation \eqref{equ0} does not admit any positive solution for $a\neq 0$ and does not admit any non-constant positive solution for $a=0$.
\end{thm}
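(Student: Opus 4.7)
The plan is to adapt the Nash--Moser iteration technique developed in \cite{HWW, WZ} to the present integral curvature regime, bypassing the ``$P$-function'' argument of \cite{CFP}. The starting point is a weighted Bochner-type inequality for the auxiliary function $f=|\nabla v|^{p}v^{\alpha}$ (with $\alpha$ chosen according to $p,q$), obtained from the $p$-Laplace Bochner formula combined with the equation $\Delta_p v=-av^q$. After applying a refined Kato inequality and absorbing the terms coming from $\langle\nabla\Delta_p v,\nabla v\rangle=-aqv^{q-1}|\nabla v|^{2}$, one arrives on the regular set $\{\nabla v\neq 0\}$ at a differential inequality of the schematic form
\begin{align*}
\mathcal{L}_v f \;\geq\; c_1(n,p,q)\, f^{1+\sigma} v^{\tau} \;-\; c_2(n,p)\,\ric_{-}\,f,
\end{align*}
where $\mathcal{L}_v$ is the linearization of $\Delta_p$. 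The coefficient $c_1$ is strictly positive precisely under the stated trichotomy: $a=0$, or $a>0$ with $q<\tfrac{n+3}{n-1}(p-1)$, or $a<0$ with $q>p-1$.

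Next, I would test this inequality against $f^{\beta-1}\phi^{2\beta}$, with $\phi$ a standard cutoff satisfying $\phi\equiv 1$ on $B_r$ and $\mathrm{supp}\,\phi\subset B_{2r}$. Integration by parts together with Young's inequality converts the left-hand side into a Dirichlet term $\int_M|\nabla(f^{\beta/2}\phi^{\beta})|^2\,dv$ plus the dominant good term $\beta\int_M f^{\beta+\sigma}\phi^{2\beta}v^{\tau}\,dv$, modulo cutoff errors. The dangerous curvature term is controlled via H\"older's inequality and the $\chi$-type Sobolev inequality \eqref{chi}:
\begin{align*}
\int_M \ric_{-}\,(f^{\beta/2}\phi^{\beta})^{2}\,dv \;\leq\; \frac{\|\ric_{-}\|_{L^{\chi/(\chi-1)}}}{\mathbb{S}_{\chi}(M)}\int_M \bigl|\nabla(f^{\beta/2}\phi^{\beta})\bigr|^{2}\,dv.
\end{align*}
Choosing $C(n,p,q,\beta^{*})$ so small that $\|\ric_{-}\|_{L^{\chi/(\chi-1)}}/\mathbb{S}_{\chi}(M)<1/2$, the bad term is absorbed into the good Dirichlet term \emph{uniformly in the iteration index} $\beta$. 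Applying \eqref{chi} once more to $g=f^{\beta/2}\phi^{\beta}$ yields a reverse-H\"older inequality on $B_r$, and the dyadic Moser iteration $\beta_{k+1}=\chi\beta_k$ produces an $L^{\infty}$-to-$L^{p_0}$ estimate of the form
\begin{align*}
\sup_{B_r} f \;\leq\; \frac{C(n,p,q,\beta^{*})}{r^{a}\,\vol(B_{2r})^{b}}\Bigl(\int_{B_{2r}} f^{p_0}\,dv\Bigr)^{1/p_0}
\end{align*}
for suitable exponents $a,b,p_0>0$, and the right-hand integral is then bounded by a power of $\vol(B_{3r})$ via a direct Caccioppoli argument applied to \eqref{equ0}.

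The endgame combines the lower volume bound $\vol(B_r)\geq C r^{2\chi/(\chi-1)}$ from \thmref{ve} with the upper bound $\vol(B(o,R))=O(R^{\beta^{*}})$ in the hypothesis: by choosing the iteration exponents so that the net power of $r$ on the right-hand side is negative, one concludes $\sup_{B_r}|\nabla v|^{p}\to 0$ as $r\to\infty$, hence $v$ is a positive constant. If $a\neq 0$, substituting back into \eqref{equ0} gives $av^{q}=0$, contradicting positivity; if $a=0$, the constant conclusion is exactly the non-existence of non-constant positive solutions. The main obstacle is the algebraic bookkeeping: the exponents in the Bochner inequality, the test-function integration, and the Moser iteration must be coordinated so that (i) $c_1$ remains strictly positive throughout in the stated $(p,q)$ ranges, (ii) the curvature absorption produces a constant independent of $\beta_k$, and (iii) the resulting polynomial-in-$r$ factor from $\vol(B_{3r})^{O(1)}$ is defeated by the cutoff-generated decay $r^{-a}$, which is exactly where the finiteness of $\beta^{*}$ is used to define $C(n,p,q,\beta^{*})$.
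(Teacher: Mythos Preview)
Your outline has a genuine gap at the step you flag as ``(ii) the curvature absorption produces a constant independent of $\beta_k$.'' When you test the Bochner-type inequality against $f^{\beta-1}\phi^{2\beta}$ and integrate by parts, the Dirichlet term $\int|\nabla(f^{\beta/2}\phi^{\beta})|^2$ appears with a coefficient of order $1/\beta$, not $1$: indeed $(\beta-1)\int f^{\beta-2}|\nabla f|^2\phi^{2\beta}\approx\tfrac{4}{\beta}\int|\nabla(f^{\beta/2}\phi^{\beta})|^2$. After applying the $\chi$-Sobolev inequality this becomes $\tfrac{c}{\beta}\,\mathbb{S}_{\chi}(M)\,\|f^{\beta}\phi^{2\beta}\|_{L^{\chi}}$, while the Ricci term after H\"older is $\|\ric_{-}\|_{L^{\chi/(\chi-1)}}\,\|f^{\beta}\phi^{2\beta}\|_{L^{\chi}}$. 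Absorption therefore forces $\|\ric_{-}\|_{L^{\chi/(\chi-1)}}\lesssim \mathbb{S}_{\chi}(M)/\beta$, which degenerates to zero along the Moser sequence $\beta_k=\chi^{k}\beta_0\to\infty$. With $\ric_{-}$ only in the \emph{critical} space $L^{\chi/(\chi-1)}$ there is no interpolation slack to rescue this (contrast the subcritical case $\gamma>\chi/(\chi-1)$ used in \thmref{main3}). So the full iteration to $L^{\infty}$ does not close, and the assertion that $\|\ric_{-}\|/\mathbb{S}_{\chi}<1/2$ suffices uniformly is incorrect.

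The paper circumvents this by \emph{not} iterating. Working with $u=-(p-1)\log v$ and $f=|\nabla u|^{2}$, it performs a \emph{single} integral estimate (\lemref{41} and \lemref{lpbound}) at one fixed exponent $\theta=\alpha+t+\tfrac{p}{2}-1$, with $t$ chosen large enough that $2(\theta+1)>\beta^{*}$. At that single level the absorption condition reads $\|\ric_{-}\|_{L^{\chi/(\chi-1)}}<\tfrac{a_2t}{\theta^{2}}\mathbb{S}_{\chi}(M)\sim\tfrac{c}{\theta}\mathbb{S}_{\chi}(M)$, and \emph{this} is precisely where $\beta^{*}$ enters the constant $C(n,p,q,\beta^{*})$ --- not in a final power-counting step after iteration, as your sketch suggests. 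The output is then the $L^{\theta\chi}$ bound $\|f\|_{L^{\theta\chi}(B_{3R/4})}\le a_3\bigl(V/R^{2(\theta+1)}\bigr)^{1/\theta}$, which combined with $V=O(R^{\beta^{*}})$ and $2(\theta+1)>\beta^{*}$ tends to $0$ as $R\to\infty$; no $L^{\infty}$ control is needed to conclude $f\equiv 0$.
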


Roughly speaking, we show that if $(M,g)$ satisfies a \emph{$\chi$-type Sobolev inequality} and has volume growth strictly subexponential, then, provided the $L^{\chi/(\chi-1)}$-norm of $\ric_-$ is bounded by $C'\,\mathbb{S}_{\chi}(M)$, where $C'$ depends on the growth rate of the volume of geodesic balls, equation \eqref{equ0} admits no positive solutions.

Moreover, in the case $p=2$ and $a=1$, we obtain the following conclusion:

\begin{thm}\label{theorem0}
Let $(M,~g)$ be a complete noncompact Riemannian manifold on which the $\chi$-type Sobolev inequality holds. Assume that $\mathrm{Vol}\left(B(o,R) \right)=O(R^{\beta^*})$ for some $\beta^*\geq n$ in the case $\chi=\frac{n}{n-2}$, where $B(o,R)\subset M$ is a geodesic ball centered at a fixed point $o\in M$. Then there exists a positive constant $C(n,q,\beta^*)$ depending on $n$, $q$ and $\beta^*$ such that, if
\begin{align*}
q<\frac{n+2}{(n-2)_+}
\end{align*}
and
\begin{align*}
\|\mathrm{Ric}_-\|_{L^\frac{\chi}{\chi-1}}\le C(n,q,\beta^*)\mathbb{S}_\chi(M),
\end{align*}
then Lane-Emden equation \eqref{se} does not admit any positive solution.
\end{thm}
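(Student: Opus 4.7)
The plan is to follow the Nash--Moser iteration strategy that underlies \thmref{main}, specialized to $p=2$ so that the Bochner identity
\[
\tfrac{1}{2}\Delta|\nabla v|^{2}=|\hess v|^{2}+\ric(\nabla v,\nabla v)-q v^{q-1}|\nabla v|^{2}
\]
becomes available. This extra structure (absent in the general quasi-linear case of \thmref{main}) is what lets us push the admissible exponent all the way to the Sobolev critical value $(n+2)/(n-2)$, as in the classical Gidas--Spruck argument, and is the reason one can afford a much wider range of $q$ than in \thmref{main}.

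First I would fix a positive solution $v$, set $w=|\nabla v|^{2}$, and combine the Bochner identity with the refined Kato inequality $|\hess v|^{2}\ge (1+\delta_{n})\bigl|\nabla |\nabla v|\bigr|^{2}$ to derive the pointwise differential inequality
\[
\Delta w \ge 2(1+\delta_{n})\bigl|\nabla|\nabla v|\bigr|^{2}-2\ric_{-}\, w-2qv^{q-1}w .
\]
I would then multiply by $w^{\alpha-1}v^{\beta}\eta^{2}$ for a pair of exponents $(\alpha,\beta)$ and a cutoff $\eta$ supported in a geodesic ball $B(o,2R)$, integrate by parts, and exploit $\Delta v=-v^{q}$ to convert the $v^{q-1}w$ term into terms controlled by $w^{\alpha}v^{\beta}$. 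The full admissible range $q<(n+2)/(n-2)$ will emerge here as the exponents for which $(\alpha,\beta)$ can be selected so that every bad term apart from the $\ric_{-}$ contribution carries a favorable sign.

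The curvature term $\int \ric_{-}\, w^{\alpha}v^{\beta}\eta^{2}\,dv$ is treated by H\"older plus the $\chi$-type Sobolev inequality \eqref{chi}: it is dominated by $\|\ric_{-}\|_{L^{\chi/(\chi-1)}}\,\mathbb{S}_{\chi}(M)^{-1}$ times a gradient energy, so the smallness hypothesis absorbs it into the Bochner term. What remains is a reverse-H\"older inequality relating the $L^{\chi s}$- and $L^{s}$-norms of a suitable power of $v$ on concentric balls, and standard Nash--Moser iteration on this inequality then produces an $L^{\infty}$-bound on $B_{R}$ controlled by an integral norm on $B_{2R}$.

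To close the argument, I would let $R\to\infty$ and combine the polynomial volume assumption $\vol(B(o,R))=O(R^{\beta^{*}})$ with H\"older to show that the integral norm on $B_{2R}$ is dominated by a negative power of $R$, forcing $v\equiv 0$, a contradiction. The hard part will be step two: choosing $(\alpha,\beta)$ so that the Kato-improved coefficient in front of $|\hess v|^{2}$ remains strictly positive uniformly in subcritical $q$, and simultaneously tracking how the absorption constants and the final threshold $C(n,q,\beta^{*})\,\mathbb{S}_{\chi}(M)$ depend on $q$ and $\beta^{*}$. Special care will be required as $q\uparrow(n+2)/(n-2)$, where several denominators degenerate and the Nash--Moser iteration needs finer bookkeeping in order to keep the final smallness constant genuinely positive.
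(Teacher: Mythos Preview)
Your outline is not the paper's argument, and as written it has a real gap.

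\textbf{What the paper actually does.} The proof is split in two. For $q<\tfrac{n+3}{n-1}$ the result is simply the $p=2$ instance of \thmref{main}, which works with $u=-(p-1)\log v$ and $f=|\nabla u|^{2}$. For the remaining range $\tfrac{n+3}{n-1}\le q<\tfrac{n+2}{(n-2)_+}$ the paper does \emph{not} iterate on $|\nabla v|^{2}$ at all. It imports Lu's auxiliary functions
\[
F=(v+\varepsilon)^{-\theta}\Bigl(\tfrac{|\nabla\omega|^{2}}{\omega^{2}}+d\,v^{q-1}\Bigr),\qquad \omega=v^{-\theta}\ \text{or}\ (v+\varepsilon)^{-\theta},
\]
with carefully tuned $\theta,d$ (depending on $n,q$ and on whether $n\ge 4$ or $n\in\{2,3\}$), and proves a pointwise lower bound $\Delta F\ge -2\ric_{-}F-\tfrac{2}{\theta}|\nabla F||\nabla\ln\omega|+C_{0}(v+\varepsilon)^{\theta}F^{2}-M_{0}\varepsilon^{q-1}F$. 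Multiplying by $F^{t}\eta^{2}$, applying the $\chi$-Sobolev inequality, and absorbing $\int\ric_{-}F^{t+1}\eta^{2}$ via the smallness hypothesis yields an $L^{(t+1)\chi}$ bound on $(v+1)^{-\theta}v^{q-1}$ with right-hand side $C\bigl(R^{-2(t+2)}\varepsilon^{-\theta(t+1)}+\varepsilon^{(q-1)+(q-\theta-1)(t+1)}\bigr)V$. Choosing $\varepsilon=R^{-1/\theta}$ and $t$ large (depending on $\beta^{*}$) makes both terms decay, so letting $R\to\infty$ forces $v\equiv 0$. There is no Nash--Moser iteration in this part; one integral inequality suffices.

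\textbf{The gap in your plan.} Your key pointwise step is wrong as stated. The refined Kato inequality $|\hess v|^{2}\ge (1+\delta_{n})\bigl|\nabla|\nabla v|\bigr|^{2}$ holds for \emph{harmonic} $v$; for a solution of $\Delta v=-v^{q}$ it fails, and what survives is the traceless-Hessian identity $|\hess v|^{2}=\bigl|\hess v-\tfrac{\Delta v}{n}g\bigr|^{2}+\tfrac{(\Delta v)^{2}}{n}$. Reaching the full subcritical range hinges on keeping and exploiting the $\tfrac{1}{n}v^{2q}$ term (or, equivalently, working with a quantity that already encodes it, as Lu's $F$ does through the $d\,v^{q-1}$ summand). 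Dropping it, as your displayed inequality does, leaves you stuck at $q<\tfrac{n+3}{n-1}$, which is exactly the threshold of \thmref{main} and the reason the paper needs a separate argument beyond it. Your closing step is also incomplete: a Nash--Moser $L^{\infty}$ bound on $v$ over $B_{R}$ in terms of an integral over $B_{2R}$ does not by itself force $v\to 0$; you still need an a priori decay estimate for that integral, and you have not indicated where it comes from.
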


\begin{rem}
Note that \(\frac{\chi}{\chi-1}>\frac{n}{2}\) whenever \(\chi\in\bigl(1,\,\frac{n}{n-2}\bigr)\).
By the relative volume comparison theorem under integral Ricci curvature bounds
due to Petersen and Wei~\cite{PW}, if \(\|\ric_-\|_{L^{\frac{\chi}{\chi-1}}}\) is bounded, then
\[
\mathrm{Vol}\bigl(B(o,R)\bigr)=O\!\left(R^{\frac{2\chi}{\chi-1}}\right)\quad \text{as}\quad R\rightarrow\infty.
\]
Consequently, the assumption `` $\mathrm{Vol}\bigl(B(o,R)\bigr)=O(R^{\beta^*})$ for some
$\beta^*\geq n$" in Theorem~\ref{main} and Theorem~\ref{theorem0} is needed only in
the borderline case \(\chi=\frac{n}{n-2}\).
\end{rem}

As a direct corollary of \thmref{main}, we obtain the following Liouville type result for $p$ harmonic functions.

\begin{cor}\label{main2}
Let $(M,g)$ be a complete noncompact Riemannian manifold on which the Sobolev inequality \eqref{chi-n} holds true. Assume that $\dim(M)=n\geq 3$ and $\vol(B(o,R))= O(R^{\beta^*})$ for some $\beta^*\geq n$ where $B(o,R)$ is a geodesic ball centered at fixed point $o\in M$. Then there exists a positive constant $C(n, p, \beta^*)$ depending on $n$, $p$ and $\beta^*$ such that, if
$$\|\ric_-\|_{L^{\frac{n}{2}}}\leq C(n,p,\beta^*)\mathbb{S}_{\frac{n}{n-2}}(M),$$
then there is no nonconstant, positive $p$-harmonic function on $(M, g)$.
\end{cor}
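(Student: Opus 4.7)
The plan is to derive Corollary \ref{main2} as an immediate specialization of Theorem \ref{main} to the parameter values $a=0$ and $\chi = n/(n-2)$. A positive $p$-harmonic function is by definition a positive solution of $\Delta_p v = 0$, which is exactly equation \eqref{equ0} with $a=0$. Under this choice the first alternative of the structural hypothesis of Theorem \ref{main} (namely, ``$a=0$'' or the sign/exponent conditions on $q$) is automatically satisfied, and the exponent $q$ plays no role in the equation itself.

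Next I would verify the translation of numerical parameters. With $\chi = n/(n-2)$ one computes
\[
\frac{2\chi}{\chi-1} = n \qquad\text{and}\qquad \frac{\chi}{\chi-1} = \frac{n}{2},
\]
so that the volume growth hypothesis $\vol(B(o,R)) = O(R^{\beta^*})$ with $\beta^* \geq n$ is precisely the condition $\beta^* \geq 2\chi/(\chi-1)$ required in Theorem \ref{main}. Moreover, inequality \eqref{chi-n} is by definition the $\chi$-type Sobolev inequality \eqref{chi} specialized to $\chi = n/(n-2)$, and the hypothesis $\|\ric_-\|_{L^{n/2}} \leq C(n,p,\beta^*)\mathbb{S}_{n/(n-2)}(M)$ matches $\|\ric_-\|_{L^{\chi/(\chi-1)}} \leq C(n,p,q,\beta^*)\mathbb{S}_\chi(M)$ under this identification.

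The only minor point is that the constant $C(n,p,q,\beta^*)$ supplied by Theorem \ref{main} formally depends on $q$, while Corollary \ref{main2} promises a constant depending only on $n$, $p$, $\beta^*$. Since $q$ is irrelevant to the equation when $a=0$, one simply fixes any admissible value $q_0$ (for instance $q_0 = 0$) and sets $C(n,p,\beta^*) := C(n,p,q_0,\beta^*)$. With these identifications Theorem \ref{main} produces the desired non-existence of nonconstant positive solutions to $\Delta_p v = 0$. There is no genuine obstacle in this argument; the corollary is essentially a rephrasing of Theorem \ref{main} in the classical critical Sobolev setting, and the main work has already been carried out in the proof of Theorem \ref{main} itself.
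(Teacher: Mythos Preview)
Your proposal is correct and matches the paper's approach: the paper presents Corollary \ref{main2} as a direct specialization of Theorem \ref{main} (stating it ``in particular'' for $p$-harmonic functions) without giving a separate proof, and your verification of the parameter identifications $a=0$, $\chi=n/(n-2)$, $2\chi/(\chi-1)=n$, $\chi/(\chi-1)=n/2$ is exactly what is needed. One small sharpening: when $a=0$ the relevant case of Lemma \ref{lem31} is Case 1 with $\beta_{n,p,q,\alpha}=1/(n-1)$ independent of $q$, so the constant produced genuinely does not depend on $q$ rather than merely depending on an arbitrary fixed $q_0$.
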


\begin{rem}
Here, we record several remarks on \thmref{main} and \thmref{theorem0}.

\bd
  \item Our results remove the prior volume growth assumption \eqref{vol1}.

  \item In \thmref{cfp}, the restriction $q>1$ is required due to the curvature assumption \eqref{ric-}. In contrast, our results remain valid for all $q\in(-\infty,1]$.

  \item We establish a Liouville theorem for the general $p$-Laplace equation \eqref{equ0}, where $a$ is an arbitrary constant.

  \item In a certain sense, these two theorems may be viewed as effective versions of the classical Liouville results of Gidas--Spruck~\cite{BJ} and Serrin--Zou~\cite{MR1946918}.
  To explain this, let $g_\varepsilon$ be a family of metrics on $\mathbb{R}^n$ such that $g_\varepsilon$ agrees with the Euclidean metric outside a compact set and $g_\varepsilon \to g_{\mathrm{Euc}}$ in $C^{2}$ as $\varepsilon\to 0$.
  The Positive Mass Theorem implies that $g_\varepsilon$ cannot have nonnegative Ricci curvature.
  Since the Sobolev constant depends only on the $C^{0}$-geometry of the metric, our results show that the classical Liouville property still holds for such a deformation provided that
  \[
  \bigl\|\ric^{g_\varepsilon}_{-}\bigr\|_{L^{\frac{n}{2}}}
  \le C(n,p,q)\,\mathbb{S}_{\frac{n}{n-2}}(\mathbb{R}^n,g_\varepsilon).
  \]
\ed
\end{rem}
%It is worth noticing that $$\|\ric_-\|_{L^{\frac{n}{2}}}=\left(\int_M\ric_-^\frac{n}{2}(x)dM\right)^\frac{2}{n}$$ is scale invariant with respect to the metric $g$ equipped on $M$, so the quantity is of obvious and important geometric significance.}
Assume that $\ric_-(x)\in L^{\gamma}(B_1)$ for some $\gamma>\frac{\chi}{\chi-1}$. By the relative volume comparison theorem under integral Ricci curvature bounds due to Petersen and Wei (cf.~\cite{PW}), we obtain the following local gradient estimate for solutions of \eqref{equ0}.

\begin{thm}\label{main3}
Let $(M,g)$ be a complete noncompact Riemannian manifold with $\dim(M)\geq 3$ on which the {\it $\chi$-type Sobolev inequality} holds. Assume $v$ is a positive solution to \eqref{equ0} on $B_1\subset M$. Then, when
$$a=0\quad\text{or} \quad a>0\quad  \&\quad  q<\frac{n+3}{n-1}(p-1)\quad \text{or} \quad a<0 \quad \& \quad  q>p-1,$$
the following gradient estimate holds true
\begin{equation}\label{1017}
\sup_{B_{1/2}}\frac{|\nabla v|^2}{v^2}\leq C\left(p, q, \mathbb{S}_{\chi}(M), \gamma, \|\ric_-\|_{L^{\gamma}(B_1)}\right),
\end{equation}
where $\gamma$ is any number greater than $\chi/(\chi-1)$. In particular, if $\|\ric_-\|_{L^{\gamma}(M)}\leq\Lambda$, then the following global gradient estimate holds
$$\frac{|\nabla v|^2}{v^2}\leq C\left(p, q, \mathbb{S}_{\chi}(M), \gamma, \Lambda\right).$$
\end{thm}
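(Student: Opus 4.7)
\textbf{Proof proposal for Theorem \ref{main3}.} The strategy combines the $p$-Bochner technique used in Theorem \ref{main} with a Nash--Moser iteration adapted to the integral Ricci bound, exploiting the strict gap $\gamma>\chi/(\chi-1)$ as the source of Hölder slack.

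The first step is to derive a differential inequality for $w=|\nabla v|^2/v^2$ (equivalently $w=|\nabla\log v|^2$). Following the scheme of \cite{WZ,HWW}, I would apply the Bochner formula to $|\nabla v|^2$, use the equation $\Delta_p v=-av^q$ to eliminate $\Delta_p v$ in favor of $v^q$, and set up the computation relative to the linearized $p$-Laplace operator $\mathcal{L}$. After a substitution $w=|\nabla v|^2 v^{-2}$ (or rather $|\nabla v|^p v^{-\alpha}$ with $\alpha$ tuned to $p$ and $q$) and making use of the hypothesis on $q$, one obtains an inequality of the schematic form
\begin{equation*}
\mathcal{L}\,w \;\geq\; A_1\, w^{1+\epsilon} \;-\; A_2\,\ric_-(x)\,w \;-\; (\text{lower order nonlinear terms in $v$,$w$}),
\end{equation*}
with $A_1>0$ precisely because of the ranges of $(p,q,a)$ allowed in the statement; this is the same positivity mechanism that drives Theorem \ref{main}.

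Next I would convert this pointwise inequality into an integral one. Multiplying by $w^{\beta}\eta^{2p}$ with $\eta$ a smooth cutoff supported in $B_1$ and integrating by parts, the good term $A_1 w^{1+\epsilon}$ produces, after applying the $\chi$-type Sobolev inequality \eqref{chi} to $\eta w^{(\beta+1)/2}$, a global $L^{(\beta+1)\chi}$ norm of $w$ on a slightly smaller ball. The bad term $\int \ric_-\,w^{\beta+1}\eta^{2p}$ is controlled by Hölder with exponents $(\gamma,\gamma')$: since $\gamma>\chi/(\chi-1)$, the conjugate $\gamma'<\chi$, so
\begin{equation*}
\int \ric_-\,w^{\beta+1}\eta^{2p} \;\leq\; \|\ric_-\|_{L^{\gamma}(B_1)}\Bigl(\int (w^{\beta+1}\eta^{2p})^{\gamma'}\Bigr)^{1/\gamma'},
\end{equation*}
and interpolating the $L^{\gamma'}$ piece between $L^{1}$ and $L^{\chi}$ lets one absorb a small $L^{\chi}$ contribution into the Sobolev term, paying only with an $L^{1}$-type norm of $w^{\beta+1}$. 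This is the step that forces the strict inequality in $\gamma$ and is, I expect, the main technical obstacle.

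With the reverse Hölder inequality $\|w\|_{L^{(\beta+1)\chi}(B_{r'})}\leq C(\beta)\|w\|_{L^{\beta+1}(B_r)}$ in hand on a shrinking family of balls between $B_{1/2}$ and $B_1$, Nash--Moser iteration with geometric exponents and a careful tracking of the constants gives $\sup_{B_{1/2}} w \leq C\,\|w\|_{L^{p_0}(B_1)}$ for some fixed $p_0>0$. Here the relative volume comparison of Petersen--Wei under an $L^{\gamma}$ bound on $\ric_-$ is what legitimately bounds the accumulated geometric factors (volume ratios of successive balls) in terms of $\|\ric_-\|_{L^\gamma(B_1)}$ and $\gamma$ alone. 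Finally, the starting $L^{p_0}$ bound for $w$ is obtained from a separate, simpler energy estimate: test $\Delta_p v + a v^q=0$ against $v^{-s}\phi^p$ for suitable $s>0$ and cutoff $\phi$, and use Sobolev and Hölder in the same manner, but with $\beta$ free, to close an absolute $L^{p_0}$ bound depending only on the stated data. Combining this with the iteration gives \eqref{1017}; the global statement follows immediately by letting the ball sweep out $M$ after rescaling.
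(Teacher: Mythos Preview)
Your overall architecture matches the paper's: work with $f=|\nabla\log v|^2$, derive a Bochner-type inequality for $\mathcal{L}(f^\alpha)$ (Lemma \ref{lem31}), convert to the integral inequality of Lemma \ref{41}, handle the $\ric_-$ term by H\"older with exponent $\gamma$ and interpolation (since $\frac{2\gamma}{\gamma-1}\in(2,2\chi)$), and then Nash--Moser iterate. Two points deserve correction.

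First, your proposed source for the starting $L^{p_0}$ bound---``test $\Delta_p v+av^q=0$ against $v^{-s}\phi^p$''---is not how the paper proceeds, and as written it does not close in all cases. For $a<0$ the term $-a\int v^{q-s}\phi^p$ sits on the wrong side with the wrong sign, and you cannot absorb it without additional structure. The paper instead extracts the initial $L^{\theta\chi}$ bound directly from the \emph{same} Bochner-derived inequality: the good term $\beta_{n,p,q,\alpha}\int f^{\theta+1}\eta^2$ in \eqref{924} absorbs $\int f^{\theta}|\nabla\eta|^2$ and $\int f^{\theta}\eta^2$ via H\"older--Young (see \eqref{2.39} and its analogue in \eqref{1016}), leaving only a volume term $V/R^{2(\theta+1)}$. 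This is where the hypothesis on $q$ does double duty---it not only gives the positivity $A_1>0$ you mention, but also supplies the self-absorption that yields the starting bound with no separate energy estimate.

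Second, the Petersen--Wei relative volume comparison is not used to control ``accumulated volume ratios'' during the iteration. The iteration in the paper runs purely on the $\chi$-type Sobolev inequality (see \eqref{931}--\eqref{1929}); volumes of intermediate balls never appear. Petersen--Wei enters only once, at the very end of the proof of Theorem \ref{main3}, to bound $\vol(B_1)\leq \omega_n+C(n,\gamma)\|\ric_-\|_{L^\gamma}^\gamma$ and thereby convert the estimate of Theorem \ref{main1} into the stated form \eqref{1017}.
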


In \cite[Theorem 1.2 ]{PW1}, Petersen and Wei proved that if a manifold has positive Sobolev constant and $\ric_-\in L^p$ for some $p>\frac{n}{2}$, then any positive harmonic function $v$ on $B_1$ satisfies the local gradient estimate
\[
\sup_{B_{1/2}} |\nabla v| \le C \sup_{B_1} v.
\]
Therefore, Theorem \ref{main3} both improves and extends the result of Petersen and Wei.

Now, we turn to recalling some topological features of noncompact complete manifolds, with a focus on estimates for the number of ends. Notice that ``{\it an end $E$ of a manifold $M$ is an unbounded component of the complement of some compact subset $D$ of $M$. In this case,
we say that $E$ is an end corresponding to $D$}".

For a noncompact complete manifold whose Ricci curvature is nonnegative outside a compact set, Cai~\cite{Cai} proved that such a manifold has only finitely many ends. Later, in 1995, Cai, Colding, and Yang~\cite{CCY} studied a gap phenomenon for the ends of this class of manifolds. More precisely, they proved the following theorem:
\begin{quotation}
Given $n > 0$, there exists an $\epsilon(n) > 0$ such that for all pointed open complete manifolds $(M^n , o)$ with Ricci curvature bounded from below by $-(n-1)\Lambda^2$ and nonnegative outside the ball $B(o, a)$, if $\Lambda a < \epsilon(n)$,
then $M^n$ has at most two ends.
\end{quotation}
For more results related to this topic, we refer to \cite{CM, Wu} and references therein.

On the other hand, using harmonic function theory to study the number of ends has a long history (see for example, \cite{CSZ, DW, LTam}). Especially, if the Sobolev constant of $(M,g)$ is positive and $\dim(M)\geq 3$, the first named author has ever used harmonic function theory to prove that such a manifold is of finitely many ends and the dimension of linear space spanned by bounded harmonic functions on $(M, g)$ equals the number of ends (cf.~\cite{W}).

By the volume estimate established in \thmref{ve} we can employ the method adopted in \cite{DW} and \cite{W} to show the following theorem.

\begin{thm}\label{end}
Let $(M, g)$ be a complete noncompact Riemannian manifold with Sobolev constant $\mathbb{S}_{\chi}(M)>0$. If $(M, g)$ has at least $k$ ends, then $\dim(H^\infty(M))\geq k$.
\end{thm}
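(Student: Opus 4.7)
The plan is to follow the Li-Tam construction of end-associated bounded harmonic functions, as implemented by the first named author in \cite{W} and \cite{DW}, so as to exhibit $k$ linearly independent elements of $H^\infty(M)$. Let $D \subset M$ be a compact set whose complement has at least $k$ unbounded components $E_1,\ldots,E_k$, and take a smooth exhaustion $\Omega_1 \Subset \Omega_2 \Subset \cdots$ of $M$ with $D \subset \Omega_1$. For each index $i\in\{1,\ldots,k\}$ and each $j$, let $h_{i,j}$ denote the harmonic function on $\Omega_j$ with boundary data $1$ on $\partial\Omega_j\cap\overline{E_i}$ and $0$ on the remaining components of $\partial\Omega_j$. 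The maximum principle gives $0 \le h_{i,j}\le 1$. The $\chi$-type Sobolev inequality \eqref{chi} powers a Moser iteration producing a mean value inequality, and hence uniform interior H\"older and gradient estimates, so by a diagonal extraction $h_{i,j}$ converges locally uniformly to a bounded harmonic function $h_i\colon M \to [0,1]$.

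The heart of the argument is to show that each $h_i$ is non-degenerate and distinguishes the ends, in the sense that $h_i$ is bounded away from $1$ deep inside $E_i$ while $h_i\to 0$ along $E_l$ for $l\ne i$. Because the $\chi$-type Sobolev inequality restricts to every open submanifold of $M$, each end $E_i$ inherits the same Sobolev inequality, and the volume growth lower bound $\vol(B_r) \ge C r^{2\chi/(\chi-1)}$ provided by \thmref{ve} gives an exponent strictly larger than $2$ since $\chi > 1$. Together these yield a Faber-Krahn type inequality and, via the standard capacity argument, non-parabolicity of each end $E_i$: each $E_i$ admits a positive minimal Green's function $G_i$ vanishing on $\partial E_i$. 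Invoking Li-Tam theory on each end, one then sees that the harmonic measure of $\partial\Omega_j \cap \overline{E_i}$ evaluated at points of $E_l$ with $l\ne i$ decays to $0$ as $j \to \infty$, while on $E_i$ itself it remains bounded away from zero outside a sufficiently large compact set.

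Linear independence of $\{h_1,\ldots,h_k\}$ then follows by testing a putative relation $\sum_i c_i h_i \equiv 0$ against sequences of points tending to infinity within each $E_l$: the asymptotic profile described above forces $c_l=0$ one end at a time. Therefore $H^\infty(M)$ contains $k$ linearly independent functions, yielding $\dim H^\infty(M) \geq k$. The main obstacle is the middle step: upgrading the combination of the $\chi$-type Sobolev inequality and the polynomial volume lower bound of \thmref{ve} into non-parabolicity of each individual end, and controlling the harmonic measures tightly enough that the limits $h_i$ do not collapse to a constant or to zero on the wrong end. Once this non-parabolicity input is secured, the remaining steps are essentially a formal instantiation of the Li-Tam scheme.
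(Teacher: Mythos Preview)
Your construction of the sequences $h_{i,j}$ and the extraction of limits $h_i$ matches the paper's setup, but the key technical step---showing the limits are nonconstant and linearly independent---is handled quite differently.  The paper does \emph{not} invoke non-parabolicity of the ends or Li--Tam capacity theory at all.  Instead, it compares $u_j^i$ directly with a fixed smooth function $\phi_j$ satisfying $\phi_j\equiv 1$ on $E_j$ and $\phi_j\equiv 0$ outside $\Omega_{i_0}\cup E_j$, so that $\rho_{j,i}:=|u_j^i-\phi_j|$ obeys $\Delta\rho_{j,i}\ge -|\Delta\phi_j|$ in the sense of distributions with $\rho_{j,i}=0$ on $\partial\Omega_i$.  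Testing against $\rho_{j,i}^{\alpha-1}$ and applying the $\chi$-type Sobolev inequality yields a uniform $L^{\alpha\chi}$ bound $\|\rho_{j,i}\|_{L^{\alpha\chi}(\Omega_i)}\le C\|\Delta\phi_j\|_{L^{p}}$ independent of $i$.  Passing to the limit gives $\int_{E_j}(1-u_j)^{\alpha\chi}<\infty$ and $\int_{E_l}u_j^{\alpha\chi}<\infty$ for $l\ne j$.  Since the volume lower bound of \thmref{ve} forces $\vol(E_j)=\infty$, these integral bounds immediately rule out $u_j$ being constant; linear independence follows by averaging over balls $B_r(x)$ with $x\to\infty$ in a chosen end and using the same uniform lower volume bound.

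The paper's route is more elementary and entirely self-contained: it sidesteps the obstacle you flagged (non-parabolicity of individual ends) by replacing the asymptotic-harmonic-measure analysis with a global $L^p$ estimate driven by the compactly supported source term $\Delta\phi_j$.  Your approach via Faber--Krahn and Green's functions would likely also succeed, but requires more machinery and the extra step of transferring the Sobolev inequality to each end with enough uniformity to control capacities.  One small slip: you write that $h_i$ is ``bounded away from $1$ deep inside $E_i$'', but the correct behaviour (and what the paper's integral bound $\int_{E_i}(1-u_i)^{\alpha\chi}<\infty$ encodes) is that $u_i$ tends to $1$ on $E_i$ in an averaged sense.
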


As a geometric application of \corref{main2} and \thmref{end}, we obtain the following gap theorem for ends.

\begin{thm}\label{main4}
Let $(M,g)$ be a complete noncompact Riemannian manifold on which the Sobolev inequality \eqref{chi-n} holds. Assume that $\dim(M)=n\geq 3$ and $\vol(B(o,R))= O(R^{\beta^*})$ for some $\beta^*\geq n$ where $B(o,R)$ is a geodesic ball centered at fixed point $o\in M$.Then there exists a positive constant $C(n,\beta^*)$ depending only on $n$ and $\beta^*$ such that, if
$$\|\ric_-\|_{L^{\frac{n}{2}}}\leq C(n,\beta^*)\mathbb{S}_{\frac{n}{n-2}}(M),$$
then $(M, g)$ is of a unique end.
\end{thm}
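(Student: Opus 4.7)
The plan is to argue by contradiction. Suppose $(M,g)$ admits at least two ends. Since the Sobolev inequality \eqref{chi-n} is a $\chi$-type Sobolev inequality with $\chi = n/(n-2) > 1$ and positive constant $\mathbb{S}_{n/(n-2)}(M) > 0$, the hypothesis of \thmref{end} is satisfied. Applying \thmref{end} with $k = 2$ yields $\dim H^\infty(M) \geq 2$; because the constant functions already span a one-dimensional subspace of $H^\infty(M)$, there must then exist a non-constant bounded harmonic function $h$ on $M$.

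The next step is to convert $h$ into a non-constant positive harmonic function by translation. Since $h$ is bounded, the quantity $c := \inf_M h$ is finite, and $u := h - c + 1$ is then a harmonic function satisfying $u \geq 1 > 0$ everywhere on $M$, while remaining non-constant.

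The final step is to invoke \corref{main2} with $p = 2$. Its hypotheses, namely the Sobolev inequality \eqref{chi-n}, the volume growth $\vol(B(o,R)) = O(R^{\beta^*})$ with $\beta^* \geq n$, and the smallness of $\|\ric_-\|_{L^{n/2}}$, coincide with those of the present theorem once we set $C(n,\beta^*) := C(n, 2, \beta^*)$, where $C(n, 2, \beta^*)$ is the constant supplied by \corref{main2}. That corollary then forbids the existence of any non-constant positive harmonic function on $(M,g)$, contradicting the $u$ constructed above. Hence $(M,g)$ has a unique end.

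The theorem is essentially a corollary obtained by chaining two previously established results, so no substantial new analytical work is required. The only points I would verify with care are: (i) that the harmonic function produced by \thmref{end} is genuinely bounded (not merely of finite Dirichlet energy or in $L^2$), so that the translation $h \mapsto h - c + 1$ is legitimate and yields a positive, harmonic, non-constant function to which \corref{main2} applies; and (ii) that the constant $C(n, 2, \beta^*)$ furnished by \corref{main2} depends only on the stated parameters, so that the choice $C(n, \beta^*) := C(n, 2, \beta^*)$ is well-defined and depends only on $n$ and $\beta^*$ as asserted.
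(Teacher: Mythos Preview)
Your proposal is correct and follows essentially the same route as the paper: argue by contradiction, invoke \thmref{end} to produce a nonconstant bounded harmonic function, and then apply \corref{main2} (with $p=2$) to reach a contradiction. The only minor difference is that the paper observes directly that the harmonic functions constructed in the proof of \thmref{end} already satisfy $0<u_j<1$ (by the maximum principle), so no translation step is needed; your detour through $u=h-\inf_M h+1$ is a perfectly valid alternative that does not rely on inspecting that proof.
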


As a direct corollary of the above theorem, we have the following:
\begin{cor}\label{main5}
Let $(M,g)$ be a complete noncompact Riemannian manifold with $\dim(M)=n\geq 3$ on which the Sobolev inequality \eqref{chi-n} holds true. Assume Ricci curvature is nonnegative outside some compact set, or more generally, $\mathrm{Vol}\left(B_r\right)=O(r^n)$ for any $r\to\infty$. Then there exists a positive constant $C(n)$ depending only on $n$ such that, if
$$\|\ric_-\|_{L^{\frac{n}{2}}}\leq C(n)\mathbb{S}_{\frac{n}{n-2}}(M),$$
then $(M, g)$ is of a unique end. In particular, if $(M,g)$ is a complete Riemannian manifold with $\dim(M)=n\geq 3$ and nonnegative Ricci curvature on which the Sobolev inequality \eqref{chi-n} holds true, then $(M, g)$ has only an end.
\end{cor}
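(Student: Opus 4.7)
The plan is to derive Corollary~\ref{main5} essentially as a specialization of Theorem~\ref{main4} with $\beta^{*}=n$. After fixing a base point $o\in M$, I need only verify that each of the two stated hypotheses (nonnegative Ricci curvature outside a compact set, or the a priori bound $\mathrm{Vol}(B_r)=O(r^n)$) guarantees the volume growth assumption $\vol(B(o,R))=O(R^{n})$ needed to invoke Theorem~\ref{main4}. Since $\beta^{*}=n$ is then determined purely by the dimension, the constant $C(n,\beta^{*})$ from Theorem~\ref{main4} collapses to a constant $C(n):=C(n,n)$ depending only on $n$, giving the claimed smallness threshold $\|\ric_{-}\|_{L^{n/2}}\le C(n)\,\mathbb{S}_{\frac{n}{n-2}}(M)$.

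If we directly assume $\mathrm{Vol}(B_{r})=O(r^{n})$, there is nothing to prove: Theorem~\ref{main4} applies verbatim with $\beta^{*}=n$, yielding that $M$ has a unique end. So the substantive case is Ricci nonnegative outside a compact set, say outside $B(o,R_{0})$. Here I would invoke the standard Bishop--Gromov-type comparison argument in the annular region $M\setminus B(o,R_{0})$: since $\ric\geq 0$ on this region, the Laplacian comparison $\Delta r\leq (n-1)/r$ holds (in the barrier sense) for the distance function $r(x)=d(o,x)$ wherever $r(x)>R_{0}$, and integrating this against an appropriate test function on annuli $B(o,R)\setminus B(o,R_{0})$ produces $\vol(B(o,R)\setminus B(o,R_{0}))\leq C R^{n}$ for $R\geq 2R_{0}$. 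Adding the fixed volume $\vol(B(o,R_{0}))$ yields $\vol(B(o,R))=O(R^{n})$, which reduces us to the previous case.

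The ``In particular'' statement, where $\ric\ge 0$ globally, is then trivial: such a manifold satisfies $\ric_{-}\equiv 0$, so the smallness hypothesis $\|\ric_{-}\|_{L^{n/2}}\le C(n)\,\mathbb{S}_{\frac{n}{n-2}}(M)$ holds automatically, and $\ric\ge 0$ is of course nonnegative outside every compact set, so the main assertion applies and produces a unique end.

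The only genuinely nontrivial step is the reduction ``$\ric\ge 0$ outside a compact set $\Rightarrow \vol(B(o,R))=O(R^{n})$'', which I expect to be the main (but still modest) obstacle, since one must handle the failure of the global curvature bound on $B(o,R_{0})$ carefully. Everything else is bookkeeping: matching the parameter $\beta^{*}=n$ in Theorem~\ref{main4} to the hypotheses, and noting that the dependence $C(n,\beta^{*})=C(n,n)$ is then purely dimensional.
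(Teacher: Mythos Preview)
Your approach is correct and matches the paper's, which presents this result as an immediate corollary of Theorem~\ref{main4} (with $\beta^{*}=n$) without giving a separate proof. One small caveat: the bound $\Delta r\le (n-1)/r$ does not hold literally for $r(x)>R_{0}$, since the minimizing geodesic from $o$ still traverses the region of possibly negative curvature; the standard fix is to use that $\ric$ is bounded below on the compact set $\overline{B(o,R_{0})}$ and integrate the Riccati inequality for the mean curvature of geodesic spheres to obtain $\mathrm{Area}(\partial B(o,r))\le C r^{n-1}$ and hence $\vol(B(o,R))=O(R^{n})$---a well-known fact you already flag as the one place requiring care.
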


%It should be pointed out that the conclusion in the above corollary ``a complete Riemannian manifold with $\dim(M)=n\geq 3$ and nonnegative Ricci curvature, on which the Sobolev inequality \eqref{chi-n} holds true, has only an end" is implied by Theorem 3.3 in \cite{W}.

Combining Theorem \ref{main4} and Cai-Colding-Yang's result, we would like to ask the following problem: {\em whether or not there exists a positive constant $\epsilon(n)$ depending on $n$ such that any noncompact complete Riemannian manifold with nonnegative Ricci curvature outside a compact set is of at most two ends if
$$\|\ric_-\|_{L^{\frac{n}{2}}}\leq \epsilon(n);$$
or more generally,  does there exist \(\epsilon(n)>0\) such that any noncompact complete Riemannian manifold has at most two ends whenever}
\[
\|\ric_-\|_{L^{\frac{n}{2}}}\le \epsilon(n)?
\]

The rest of the paper is organized as follows. In Section~2, we derive a volume estimate for geodesic balls $B_R$ in $(M,g)$ under the assumption that $(M,g)$ satisfies a \emph{$\chi$-type Sobolev inequality}. Section~3 is devoted to a careful estimate of $\sL\!\left(|\nabla \log v|^{2\alpha}\right)$ (the operator $\sL$ is defined explicitly in \eqref{linea}). Our proofs show that, by choosing a suitable parameter $\alpha$, one can obtain effective integral estimates for the gradient of positive solutions to \eqref{equ0}. In particular, under the assumptions of Theorem~\ref{main} or Theorem~\ref{main3}, we obtain an $L^{\theta\chi}$ bound for $|\nabla \log v|$. Crucially, this bound depends only on the radius and the volume of the geodesic ball. With this integral bound in hand, we then apply the Nash--Moser iteration scheme to complete the proof of \thmref{main3}. In Section~4, we proceed to prove \thmref{theorem0} by choosing appropriate auxiliary functions. Finally, in Section~5 we prove \thmref{end} and \thmref{main4}.

\section{Volume estimate}
In this section, we shall provide two different proofs of \thmref{ve}. One of the two proofs is to make use of the Nash-Moser iteration initially developed by Wang in \cite{W}, the other is a direct iteration of volume of the geodesic ball as in \cite{HE}.

First, we will show the following result.

\begin{thm}\label{leq}
Let $(M^n,g)$ be a complete noncompact Riemannian manifold on which the {\it $\chi$-type Sobolev inequality} holds with $n\geq3$, then $\chi\leq n/(n-2)$.\end{thm}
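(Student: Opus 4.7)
The plan is to extract the constraint $\chi\leq n/(n-2)$ from the locally near-Euclidean structure of the metric by a simple scaling argument on test functions that concentrate around a single point. The heuristic is that a global Sobolev-type inequality must survive arbitrarily small rescalings, and only exponents $\chi\leq n/(n-2)$ are compatible with the Euclidean scaling relations between $L^{2\chi}$ mass and Dirichlet energy in dimension $n$.

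First I would fix an arbitrary point $p\in M$ and a normal coordinate chart on a small geodesic ball $B_{r_0}(p)$; there one has $g_{ij}(x)=\delta_{ij}+O(|x|^2)$, so both the Riemannian volume form and the squared Riemannian gradient of any smooth function agree with their Euclidean counterparts up to multiplicative factors tending to $1$ uniformly as the support shrinks to $p$. Next, fix one nonzero $\phi\in C_0^\infty(\R^n)$ with $\mathrm{supp}(\phi)\subset B_1^{\mathrm{Eucl}}$, and for each $\lambda\in(0,r_0)$ set $\phi_\lambda(x):=\phi(x/\lambda)$, viewed as a smooth compactly supported function on $M$ via the chart. Straight Euclidean scaling gives
\begin{equation*}
\int_{\R^n}\phi_\lambda^{2\chi}\,dx=\lambda^{n}\int_{\R^n}\phi^{2\chi}\,dx,\qquad \int_{\R^n}|\nabla\phi_\lambda|^{2}\,dx=\lambda^{n-2}\int_{\R^n}|\nabla\phi|^{2}\,dx,
\end{equation*}
and the near-Euclidean form of $g$ on $\mathrm{supp}(\phi_\lambda)\subset B_\lambda(p)$ upgrades these to
\begin{equation*}
\int_M \phi_\lambda^{2\chi}\,dv_g=(1+o(1))\,\lambda^{n}\!\int_{\R^n}\phi^{2\chi}\,dx,\qquad \int_M|\nabla\phi_\lambda|^{2}\,dv_g=(1+o(1))\,\lambda^{n-2}\!\int_{\R^n}|\nabla\phi|^{2}\,dx
\end{equation*}
as $\lambda\to 0^+$.

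Plugging these asymptotics into the $\chi$-type Sobolev inequality \eqref{chi} applied to $\phi_\lambda$ yields
\begin{equation*}
\mathbb{S}_\chi(M)\,\lambda^{n/\chi}\Bigl(\int_{\R^n}\phi^{2\chi}\,dx\Bigr)^{1/\chi}(1+o(1))\leq \lambda^{n-2}\int_{\R^n}|\nabla\phi|^{2}\,dx\,(1+o(1)).
\end{equation*}
Since the bracketed integrals are positive constants independent of $\lambda$, sending $\lambda\to 0^+$ forces $n/\chi\geq n-2$, i.e.\ $\chi\leq n/(n-2)$. The only technical point, and what I would expect to be the most subtle part of the write-up, is to confirm that the $O(\lambda^2)$ normal-coordinate error terms do not corrupt the leading scaling exponents; but since both leading orders are polynomial in $\lambda$ and the coordinate correction is of strictly higher order in $\lambda$, the limit cleanly isolates the exponent comparison, so no serious obstacle is anticipated.
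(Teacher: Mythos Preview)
Your argument is correct and follows essentially the same approach as the paper: both test the $\chi$-type Sobolev inequality against a one-parameter family of functions concentrating at a point and compare the leading powers of the shrinking parameter on the two sides. The only cosmetic difference is the choice of test function---the paper uses the tent function $u(x)=\max\{r-d_g(x,o),0\}$ together with the small-ball volume asymptotics, whereas you use a generic rescaled bump in normal coordinates; either choice yields the same exponent comparison $n/\chi\geq n-2$.
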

\begin{proof}
Fix a point $o\in M$ and fix some $r>0$. Define
\begin{equation*}
u(x)=\left\{
\begin{aligned}
r-d_g(x,0)&,\ \ \ \ \text{if}\ \ d_g(x,o)\leq r,\\
0&,\ \ \ \ \text{if}\ \ d_g(x,0)\geq r,
\end{aligned}
\right.
\end{equation*}
where $d_g(\cdot,\cdot)$ is the distance function on $(M,g)$. Obviously, $u\in W_0^{1,2}(M,g)$. Notice that
$$\int_M |\nabla u|^2=\mathrm{Vol}\left(B_r(o)\right).$$

On the other hand, we know that
$$\mathrm{Vol}\left(B_r(o)\right)=\omega_nr^n(1+o(1)),\quad\mathrm{and}\quad \mathrm{Area}\left(\partial B_r(o)\right)=n\omega_nr^{n-1}(1+o(1))\ \ \ \text{as}\ r\rightarrow 0.$$
Direct calculation shows that
$$\left(\int_M u^{2\chi}\right)^{\frac{1}{\chi}}=\left(n\omega_n\frac{\Gamma(n)\Gamma(2\chi+1)}{\Gamma(n+2\chi+1)}\right)^{\frac{1}{\chi}}r^{2+\frac{n}{\chi}}(1+o(1))\ \ \text{as}\ r\rightarrow 0.$$
From the definition of the $\chi$-type Sobolev inequality, there must hold
$$n\leq 2+\frac{n}{\chi}.$$
Hence, $\chi\leq n/(n-2)$.
\end{proof}

Now, we will establish a local maximum principle for subharmonic functions on Riemannian manifolds on which the $\chi$-type Sobolev inequality holds via the classical Nash-Moser iteration. Then, we will see later that \thmref{ve} is a direct corollary of this local maximum principle.

\begin{lem}\label{nm1}
Let $(M,g)$ be a complete noncompact Riemannian manifold on which the $\chi$-type Sobolev inequality holds. Assume $u\in W^{1,2}\left(B_r\right)$ satisfying
\begin{equation*}
\Delta u\geq0,
\end{equation*}
in the weak sense, i.e.,
\begin{equation*}\int_{B_r} \langle\nabla u,\nabla \phi\rangle\leq 0,\ \ \text{for any}\ \ 0\leq\phi\in C^{\infty}_0\left(B_r\right).\end{equation*}
Then, for any $s>0$ and $0<\theta<1$, there holds
\begin{equation}\label{nm2}
\sup_{B_{\theta r}}u\leq C\left(\chi,s,\theta, \mathbb{S}_\chi(M)^{-1}\right)r^{-\frac{2\chi}{s(\chi-1)}}\left(\int_{B_r}(u^+)^s\right)^{1/s}.
\end{equation}
\end{lem}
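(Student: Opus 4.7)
The strategy is a standard Nash–Moser / De Giorgi iteration, but carried out with the $\chi$-type Sobolev inequality \eqref{chi} in place of the usual Euclidean/Sobolev inequality, so that each iteration step gains a factor of $\chi$ on the exponent rather than $n/(n-2)$. Replacing $u$ by $u^+$ (which is still weakly subharmonic) we may assume $u\geq0$. To avoid the standard issue of negative powers near the zero set of $u$, I would first work with $u_\varepsilon=u+\varepsilon$ for $\varepsilon>0$, obtain bounds independent of $\varepsilon$, and let $\varepsilon\to0$ at the end.

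\textbf{Step 1 (Caccioppoli + Sobolev).} For $p>1$ and a Lipschitz cutoff $\phi$ compactly supported in $B_r$, test $\Delta u\geq0$ against $\phi^{2}u_\varepsilon^{p-1}$ to obtain
\[
(p-1)\int\phi^{2}u_\varepsilon^{p-2}|\nabla u|^{2}\,dv\leq 2\int\phi\,u_\varepsilon^{p-1}|\nabla u||\nabla\phi|\,dv,
\]
and absorb via Cauchy–Schwarz to get $\int\phi^{2}u_\varepsilon^{p-2}|\nabla u|^{2}\leq C(p)\int u_\varepsilon^{p}|\nabla\phi|^{2}$. Applying \eqref{chi} to $f=\phi\,u_\varepsilon^{p/2}$ and expanding $|\nabla f|^{2}$ yields
\[
\mathbb{S}_{\chi}(M)\left(\int\bigl(\phi\,u_\varepsilon^{p/2}\bigr)^{2\chi}dv\right)^{\!1/\chi}\leq C(p)\int u_\varepsilon^{p}|\nabla\phi|^{2}\,dv.
\]

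\textbf{Step 2 (Moser iteration for $s\geq s_{0}>1$).} Choosing standard cutoffs supported in $B_{r_{k}}$ and equal to $1$ on $B_{r_{k+1}}$ with $r_{k}=\theta r+(1-\theta)r\,2^{-k}$, so $|\nabla\phi|\leq C\,2^{k}/((1-\theta)r)$, and setting $p_{k}=s_{0}\chi^{k}$, the previous inequality yields
\[
\|u_\varepsilon\|_{L^{p_{k+1}}(B_{r_{k+1}})}\leq\bigl[C(p_{k})\,\mathbb{S}_{\chi}(M)^{-1}(1-\theta)^{-2}r^{-2}2^{2k}\bigr]^{1/p_{k}}\|u_\varepsilon\|_{L^{p_{k}}(B_{r_{k}})}.
\]
Iterating and using $\sum_{k\geq0}p_{k}^{-1}=\chi/\bigl(s_{0}(\chi-1)\bigr)$, I would send $k\to\infty$ and then $\varepsilon\to0$ to conclude
\[
\sup_{B_{\theta r}}u\leq C(\chi,s_{0},\theta,\mathbb{S}_{\chi}(M)^{-1})\,r^{-\frac{2\chi}{s_{0}(\chi-1)}}\left(\int_{B_{r}}u^{s_{0}}dv\right)^{\!1/s_{0}},
\]
which is \eqref{nm2} for any $s_{0}>1$.

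\textbf{Step 3 (extension to $0<s<1$).} This is the step that needs the most care. I would apply Step 2 on pairs of nested balls $B_{\rho'}\subset B_{\rho}\subset B_{r}$ (with $\theta r\leq\rho'<\rho\leq r$) to get
\[
\sup_{B_{\rho'}}u\leq C(\rho-\rho')^{-\frac{2\chi}{\chi-1}}\int_{B_{\rho}}u\,dv\leq C(\rho-\rho')^{-\frac{2\chi}{\chi-1}}\bigl(\sup_{B_{\rho}}u\bigr)^{1-s}\int_{B_{r}}u^{s}\,dv,
\]
where in the last inequality I interpolate $L^{1}$ by $L^{\infty}$ and $L^{s}$. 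Letting $\Phi(\rho)=\sup_{B_{\rho}}u$ and applying Young's inequality with exponents $1/(1-s)$ and $1/s$ gives
\[
\Phi(\rho')\leq\tfrac{1}{2}\Phi(\rho)+C(\rho-\rho')^{-\frac{2\chi}{s(\chi-1)}}\Bigl(\int_{B_{r}}u^{s}dv\Bigr)^{\!1/s}.
\]
The standard iteration lemma (e.g.\ Giaquinta's absorption lemma) applied to $\Phi$ on the interval $[\theta r,r]$ then removes the $\tfrac{1}{2}\Phi(\rho)$ term and produces the claimed inequality with the correct $s$-dependent scaling exponent $-2\chi/(s(\chi-1))$.

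\textbf{Main obstacle.} The principal technical point is Step 3: the iteration in Step 2 only produces the sup bound against the $L^{s_{0}}$ norm for $s_{0}>1$, and one must carefully combine interpolation with an absorption argument to descend to arbitrary $s>0$ while keeping the scaling in $r$ consistent with $r^{-2\chi/(s(\chi-1))}$. The rest is bookkeeping of the constants $C(p_{k})$ through the geometric series in $1/p_{k}$, which converges precisely because $\chi>1$.
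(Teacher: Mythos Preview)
Your proposal is correct and follows essentially the same route as the paper's proof: Caccioppoli plus the $\chi$-Sobolev inequality to drive a Moser iteration when the exponent is large, followed by an $L^\infty$--$L^s$ interpolation and Giaquinta's absorption lemma for small $s$.

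One small sloppiness worth fixing: in Step~3 you invoke Step~2 to obtain an $L^1$ bound (i.e.\ the case $s_0=1$, with exponent $-2\chi/(\chi-1)$), but your Step~1 only delivers the Caccioppoli inequality for $p>1$, and the constant $C(p)\sim p^2/(p-1)^2$ blows up as $p\to1$, so Step~2 genuinely gives \eqref{nm2} only for $s_0>1$. The paper handles this by running the iteration for $s\geq2$ and then, in the second phase, interpolating $\|u\|_{L^2}\leq(\sup u)^{1-s/2}\|u\|_{L^s}^{s/2}$ for $0<s<2$ before applying Young and the absorption lemma. You should do the same with any fixed $s_0>1$ in place of $s_0=1$; this also closes the borderline case $s=1$ that your two-case split $(s_0>1$ vs.\ $0<s<1)$ leaves uncovered.
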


\begin{proof}
Since $u^+$ is also the subsolution, without loss of generality we assume $u\geq0$. We first prove this lemma in the case $s\geq2$. By integration by part, we know
$$\int_{B_r} \langle\nabla u,\nabla \phi\rangle\leq 0,\ \ \text{for any}\ \ 0\leq\phi\in W^{1,2}_0\left(B_r\right).$$
For any $\eta(x)\in C_0^{\infty}\left(B_r\right)$, substituting $\phi=\eta^2u^{s-1}$ into the above inequality yields
\begin{equation}\label{eq1}
(s-1)\int_{B_r}|\nabla u|^2u^{s-2}\eta^2\leq-2\int_{B_r}\eta u^{s-1}\langle\nabla u,\nabla \eta\rangle.
\end{equation}
By Young's inequality, we deduce that
\begin{equation}\label{eq2}
\int_{B_r}|\nabla u|^2u^{s-2}\eta^2\leq\frac{4}{(s-1)^2}\int_{B_r}u^s|\nabla \eta|^2.
\end{equation}
Note that
$$u^{s-2}|\nabla u|^2=\frac{4}{s^2}|\nabla u^{\frac{s}{2}}|^2.$$
We know
$$|\nabla (\eta u^{s/2})|^2=\eta^2|\nabla u^{\frac{s}{2}}|^2+u^s|\nabla \eta|^2+s\eta u^{s-1}\langle\nabla u,\nabla \eta\rangle.$$
This implies
\begin{eqnarray}\label{eq4}
\int_{B_r}|\nabla (\eta u^{s/2})|^2 &=&\int\eta^2|\nabla u^{\frac{s}{2}}|^2+\int u^s|\nabla \eta|^2+s\int\eta u^{s-1}\langle\nabla u,\nabla \eta\rangle\nonumber\\
&\leq& \frac{s^2}{(s-1)^2} \int u^s|\nabla \eta|^2+\int u^s|\nabla \eta|^2+\int u^s|\nabla \eta|^2+\frac{s^2}{4}\int u^{s-2}|\nabla u|^2\eta ^2\nonumber\\
&\leq&2\left(1+\frac{s^2}{(s-1)^2}\right)\int u^s|\nabla \eta|^2\nonumber\\
&\leq&10\int u^s|\nabla \eta|^2.
\end{eqnarray}
By Sobolev inequality, there holds
$$\mathbb{S}_{\chi}(M)\left(\int_{B_r}(\eta u^{s/2})^{2\chi}\right)^{\frac{1}{\chi}}\leq\int_{B_r}|\nabla (\eta u^{s/2})|^2.$$
Combining the above inequality with \eqref{eq4}, we arrive at
\begin{equation}\label{eq3}
\left(\int_{B_r}(\eta u^{s/2})^{2\chi}\right)^{\frac{1}{\chi}}\leq 10\mathbb{S}_\chi(M)^{-1}\int u^s|\nabla \eta|^2.
\end{equation}

For some positive number $\theta\in(0,1)$, let
$$r_k=r\lrp{\theta+\frac{1-\theta}{2^k}},\ \ \ k=0, 1, 2, \cdots$$
and choose $\eta_k\in C_0^{\infty}\left(B_{r_k}\right)$ such that $\eta_k\equiv 1$ on $B_{r_{k+1}}$ and
\begin{equation}\label{eq5}
|\nabla \eta_k|\leq\frac{2}{r_k-r_{k+1}}=\frac{2^{k+2}}{(1-\theta)r}.
\end{equation}
Let
$$s_k=s\chi^{k}.$$
Substituting $\eta\triangleq\eta_k$, $s\triangleq s_k$ and \eqref{eq5} into \eqref{eq3}, we obtain
\begin{equation}\label{eq6}
\|u\|_{L^{s_{k+1}}\lrp{B_{r_{k+1}}}}\leq\lrb{\frac{160\times4^k}{\mathbb{S}_\chi(M)(1-\theta)^2r^2}}^{\frac{1}{s_k}} \|u\|_{L^{s_{k}}\lrp{B_{r_{k}}}}.
\end{equation}
By iteration, we derive
\beq\label{eq7}
\|u\|_{L^{s_{k+1}}\lrp{B_{r_{k+1}}}}\leq4^{\sum_{i=0}^{k}\frac{i}{s_i}}\left\{\frac{160}{\mathbb{S}_\chi(M)(1-\theta)^2r^2}\right\}^{\sum_{i=0}^{k}\frac{1}{s_i}} \|u\|_{L^{s}\lrp{B_{r}}}.
\eeq
Since
$$\sum_{i=0}^{\infty}\frac{1}{s_i}=\frac{\chi}{s(\chi-1)}\quad\mbox{and}\quad \sum_{i=0}^{\infty}\frac{i}{s_i}=\frac{\chi}{s(\chi-1)^2}.$$
Letting $k\rightarrow\infty$ in \eqref{eq7} yields
\begin{equation}\label{eq8}
\|u\|_{L^{\infty}\left(B_{\theta r}\right)}\leq C\left(\chi,s,\mathbb{S}_\chi(M)^{-1}\right)[(1-\theta)r]^{-\frac{2\chi}{s(\chi-1)}}\|u\|_{L^s\left(B_r\right)}.
\end{equation}

Next, we prove Lemma \ref{nm1} when $s\in(0,2)$. By letting $s=2$ in \eqref{eq8}, we obtain
\begin{equation*}
\sup_{B_{\theta r}}u\leq C\left(\chi,\mathbb{S}_\chi(M)^{-1}\right)[(1-\theta)r]^{-\frac{\chi}{\chi-1}}\|u\|_{L^2\left(B_r\right)}.\end{equation*}
Hence, for $0<s<2$, there holds true
\begin{equation*}
\sup_{B_{\theta r}}u\leq C\left(\chi,\mathbb{S}_\chi(M)^{-1}\right)[(1-\theta)r]^{-\frac{\chi}{\chi-1}}\left(\sup_{B_r}u\right)^{1-\frac{s}{2}}
\left(\int_{B_r}u^s\right)^{\frac{1}{2}}.
\end{equation*}
By Young's inequality, we deduce that
\begin{equation}\label{eq9}
\sup_{B_{\theta r}}u\leq \frac{2-s}{2}\sup_{B_r}u+\frac{s}{2}C\left(\chi,\mathbb{S}_\chi(M)^{-1}\right)^{\frac{2}{s}}[(1-\theta)r]^{-\frac{2\chi}{s(\chi-1)}}
\|u\|_{L^s\left(B_r\right)}.
\end{equation}
Let $\tilde{s}=\theta r$, $t=r$, $\psi(s)=\sup_{B_{\theta r}}u$ and $\psi(t)=\sup_{B_{r}}u$. Then \eqref{eq9} can be rewritten as
\begin{equation*}
\psi(\tilde{s})\leq\frac{2-s}{2}\psi(t)+C\left(\chi,\mathbb{S}_\chi(M)^{-1}\right)(t-\tilde{s})^{-\frac{2\chi}{s(\chi-1)}}\|u\|_{L^s\left(B_r\right)}.\end{equation*}
By Lemma \ref{hl} below, we conclude that for $s\in(0,\,2)$, inequality \eqref{eq8} also holds. Thus, we complete the proof.
\end{proof}

\begin{lem}[cf. \cite{ChWu}]\label{hl}
Let $f(t)\geq0$, $t\in[\tau_0,\tau_1]$ with $\tau_0\geq0$. Suppose for $\tau_0\leq t<\tilde{s}\leq\tau_1$,
$$f(t)\leq\theta f(\tilde{s})+\frac{A}{(\tilde{s}-t)^{\alpha}}+B$$
for some $\theta\in[0,1)$. Then for any $\tau_0\leq t<\tilde{s}\leq\tau_1$, there holds
$$f(t)\leq c(\alpha,\theta)\left(\frac{A}{(\tilde{s}-t)^\alpha}+B\right).$$
\end{lem}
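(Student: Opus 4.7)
The plan is to prove this classical interpolation-type lemma by a geometric-series iteration argument. The idea is to construct a sequence $\{t_i\}$ in $[\tau_0,\tau_1]$ that increases monotonically to the right endpoint, apply the given inequality along consecutive pairs $(t_i,t_{i+1})$, and iterate, choosing the spacing so that the resulting series converges.

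Concretely, fix $t$ and $\tilde{s}$ with $\tau_0\leq t<\tilde{s}\leq\tau_1$ and let me relabel them $t_0$ and $\sigma$. For a parameter $\tau\in(0,1)$ to be chosen, I would set
$$t_{i+1}=t_i+(1-\tau)\tau^{i}(\sigma-t_0),$$
so that $t_i=t_0+(1-\tau^{i})(\sigma-t_0)\nearrow\sigma$ and $t_{i+1}-t_i=(1-\tau)\tau^{i}(\sigma-t_0)$. Applying the hypothesis to each pair $t=t_i<\tilde{s}=t_{i+1}$ gives
$$f(t_i)\leq\theta f(t_{i+1})+\frac{A}{[(1-\tau)(\sigma-t_0)]^{\alpha}}\tau^{-i\alpha}+B,$$
and iterating this $k$ times produces
$$f(t_0)\leq\theta^{k}f(t_k)+\frac{A}{[(1-\tau)(\sigma-t_0)]^{\alpha}}\sum_{i=0}^{k-1}(\theta\tau^{-\alpha})^{i}+B\sum_{i=0}^{k-1}\theta^{i}.$$

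The decisive step is to pick $\tau$ so that $\theta\tau^{-\alpha}<1$; for instance $\tau=\theta^{1/(2\alpha)}$ (assuming $\theta>0$, the case $\theta=0$ being immediate from the hypothesis itself) yields $\theta\tau^{-\alpha}=\theta^{1/2}<1$. Both series then converge. Sending $k\to\infty$, the leading term $\theta^{k}f(t_k)$ vanishes, and summing the geometric series gives
$$f(t_0)\leq\frac{c(\alpha,\theta)\,A}{(\sigma-t_0)^{\alpha}}+\frac{B}{1-\theta},$$
which is exactly the desired conclusion after renaming $\sigma$ back to $\tilde{s}$.

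The one subtlety I expect is the step $\theta^{k}f(t_k)\to 0$: it requires a priori finiteness of $f$ along the sequence, so the lemma implicitly needs $f$ to be bounded on $[\tau_0,\tau_1]$ (as is standard in this type of Giaquinta--Giusti iteration). In the application to Lemma~\ref{nm1} this is not a problem, since $f(t)=\sup_{B_{t}}u$ is finite for every $t<r$ by the case $s=2$ of \eqref{eq8}, which was already established before appealing to this lemma. Apart from this boundedness bookkeeping, the argument is purely algebraic.
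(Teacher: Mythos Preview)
The paper does not give its own proof of this lemma; it simply cites the reference \cite{ChWu} and states the result. Your argument is exactly the standard Giaquinta--Giusti iteration that appears in that reference, and it is correct. Your remark about the implicit boundedness assumption on $f$ (needed so that $\theta^{k}f(t_k)\to 0$) is well taken and correctly justified for the application in Lemma~\ref{nm1}.
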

\begin{proof}[Proof of \thmref{ve}(Method 1):]   It is easy to see that Theorem \ref{ve} can be directly  deduced from Lemma \ref{nm1} by letting $u=1$, $s=1$ and $\theta=\frac{1}{2}$ in \eqref{nm2}.
\end{proof}

Next, we shall prove Theorem \ref{ve} via a direct iteration of the volume of the geodesic balls as in \cite{HE}.
\begin{proof}[Proof of Theorem \ref{ve}:] Recall that the $\chi$-type Sobolev inequality tells us that for any $u\in W^{1,2}_0(M)$, there holds
$$\mathbb{S}_{\chi}(M)\left(\int_M u^{2\chi}dv\right)^{\frac{1}{\chi}}\leq\int_M |\nabla u|^2.$$
Now, let $r>0$ and $x$ be some point of $M$, and let $u\in W^{1,2}_0(M)$ be such that $u=0$ on $M\backslash B_x(r)$. By H\"older's inequality, we have
$$\lrp{\int_M u^2dv}^{\frac{1}{2}}\leq \lrp{\int_M u^{2\chi}dv}^{\frac{1}{2\chi}}\vol\lrp{B_x(r)}^{\frac{\chi-1}{2\chi}}.$$
Hence,
\begin{eqnarray}\label{218}
\frac{\lrp{\int_M |\nabla u|^2dv}^{\frac{1}{2}}}{\lrp{\int_M u^2dv}^{\frac{1}{2}}}&\geq&\sqrt{\mathbb{S}_{\chi}(M)}\frac{\lrp{\int_M u^{2\chi}dv}^{\frac{1}{2\chi}}}{\lrp{\int_M u^2dv}^{\frac{1}{2}}}\nonumber\\
&\geq&\frac{\sqrt{\mathbb{S}_{\chi}(M)}}{\vol\lrp{B_x(r)}^{\frac{\chi-1}{2\chi}}}.
\end{eqnarray}
From now on, let
\begin{equation*}
u(y)=\left\{
\begin{aligned}
r-d_g(x,y)&,\ \ \ \ \text{if}\ \ d_g(x,y)\leq r,\\
0&,\ \ \ \ \text{if}\ \ d_g(x,y)\geq r,
\end{aligned}
\right.
\end{equation*}
where $d_g(\cdot,\cdot)$ is the distance function on $(M,g)$. Obviously, $u$ is Lipschitz and $u=0$ on $M\backslash B_x(r)$. Substituting $u$ into \eqref{218} yields
\begin{eqnarray*}
\frac{\int_M |\nabla u|^2dv}{\int_M u^2dv}&=&\frac{\vol\left(B_x(r)\right)}{\int_{B_x(r)} u^2dv}\\
&\geq&\frac{\mathbb{S}_{\chi}(M)}{\vol\left(B_x(r)\right)^{\frac{\chi-1}{\chi}}}.
\end{eqnarray*}
Note that
$$\int_{B_x(r)} u^2dv\geq \int_{B_x\left(r/2\right)} u^2dv,$$
and
$$ \int_{B_x\left(r/2\right)} u^2dv\geq \frac{r^2}{2^2}\vol\left(B_x\left(\frac{r}{2}\right)\right).$$
It is easy to see that
$$\frac{\mathbb{S}_{\chi}(M)}{\vol\left(B_x(r)\right)^{\frac{\chi-1}{\chi}}}\leq\frac{\vol\left(B_x(r)\right)}{\int_{B_x\left(r/2\right)} u^2dv}\leq \frac{2^2\vol\left(B_x(r)\right)}{r^2\vol\left(B_x\left(\frac{r}{2}\right)\right)}.$$
Hence, we conclude from the above inequality that
$$\vol\left(B_x(r)\right)\geq\left(\frac{r\sqrt{\mathbb{S}_{\chi}(M)}}{2}\right)^{\frac{2\chi}{2\chi-1}}\vol\left(B_x\left(\frac{r}{2}\right)\right)^{\frac{\chi}{2\chi-1}}.$$
Hence, for any $m\in \mathbb{N}$, there holds
$$\vol\left(B_x\left(\frac{r}{2^m}\right)\right)\geq\left(r\sqrt{\mathbb{S}_{\chi}(M)}\right)^{\frac{2\chi}{2\chi-1}}{2}^{-\frac{2(m+1)\chi}{2\chi-1}}\vol\left(B_x\left(\frac{r}{2^{m+1}}\right)\right)^{\frac{\chi}{2\chi-1}}.$$
By induction, we then arrive at
\begin{equation}\label{220}\vol\left(B_x(r)\right)\geq\left(r\sqrt{\mathbb{S}_{\chi}(M)}\right)^{2\alpha(m)}{2}^{-2\beta(m)}\vol\left(B_x\left(\frac{r}{2^{m}}\right)\right)^{\gamma(m)},\end{equation}
where
$$\alpha(m)=\sum^{m}_{i=1}\left(\frac{\chi}{2\chi-1}\right)^i, \ \ \beta(m)=\sum^{m}_{i=1}i\left(\frac{\chi}{2\chi-1}\right)^i,$$
and
$$\gamma(m)=\left(\frac{\chi}{2\chi-1}\right)^m.$$
Direct computation shows that
$$\lim_{m\rightarrow\infty}\alpha(m)=\frac{\chi}{\chi-1}\quad\quad \text{and}\quad\quad \lim_{m\rightarrow\infty}\beta(m)=\frac{\chi(2\chi-1)}{(\chi-1)^2}.$$

On the other hand, it's well known that the volume of geodesic ball with radius $r$ has the following expansion (cf.~\cite{GHL})
$$\vol\left(B_x(r)\right)=b_nr^n\left(1-\frac{\mathrm{R}_g(x)}{6(n+2)}r^2+o\left(r^2\right)\right),$$
where $\mathrm{R}_g(x)$ denotes the scalar curvature of $(M,g)$ at $x$ and $b_n$ is the volume of the Euclidean ball of radius one. Hence,
$$\lim_{m\rightarrow \infty}\vol\left(B_x\left(\frac{r}{2^{m}}\right)\right)^{\gamma(m)}=1.$$
By letting $m\rightarrow\infty$, we obtain
$$\vol\left(B_x(r)\right)\geq C\left(\chi,\mathbb{S}_\chi(M)\right)r^{\frac{2\chi}{\chi-1}}.$$
Thus we complete the proof of Theorem \ref{ve}.
\end{proof}

\section{$p$-Laplace case: Proof of \thmref{main} and \thmref{main3}}
\subsection{Linearization operator $\sL$ of $p$-Laplacian.}\
\\
Recall that the $p$-Laplace operator is defined as
\begin{equation}\label{lnz}
\Delta_pu=\di\left(|\nabla u|^{p-2}\nabla u\right).
\end{equation}
The solution of $p$-Laplace equation $\Delta_pu=0$, usually called $p$-harmonic function, is the critical point of the energy functional
$$
E(u)=\int_M|\nabla u|^p.
$$
From the definition, we see that a $2$-harmonic function is just a usual harmonic function.
	
\begin{defn}\label{def1}
$v$ is said to be a (weak) solution of equation (\ref{equ0}) on a region $\Omega\subset M$, if $v\in L^\infty_{loc}(\Omega)\cap W^{1,p}_{loc}(\Omega)$ and for all $\psi\in W^{1,p}_0(\Omega)$, we have
\begin{align*}
-\int_\Omega|\nabla v|^{p-2}\la\nabla v,\nabla\psi\ra +\int_\Omega av^q\psi=0.
\end{align*}
\end{defn}

From now on, we always assume that $v\in W_{loc}^{1,p}\left(\Omega\right)\cap L_{loc}^{\infty}\left(\Omega\right)$ is a weak and positive solution of the equation \eqref{equ0}. We denote
$$
\Omega_{cr} = \{x\in \Omega:\nabla v(x)=0\}.
$$
According to Theorem 1.4 in \cite{ACF} and the classical regularity theory (for example, see \cite{MR0709038, MR1946918, MR0727034, MR0474389}), we know that
$$v\in C_{loc}^{1,\beta}(\Omega)\cap W^{2,2}_{loc}(\Omega\setminus\Omega_{cr})\quad \mbox{and}\quad v\in C_{loc}^{\infty}(\Omega^c_{cr}).$$
	
On the other hand, it is easy to see from \eqref{lnz} that  the linearization operator $\sL$ of the $p$-Laplace operator is
\begin{align*}
\sL(\psi)=\di\left(|\nabla u|^{p-2}A(\nabla \psi)\right),
\end{align*}
where
\begin{align*}
A(\nabla\psi) = \nabla\psi+(p-2)|\nabla u|^{-2}\la\nabla \psi,\nabla u\ra\nabla u.
\end{align*}
	
Now, let $v$ be a positive solution to equation \eqref{equ0}. By a logarithmic transformation
$$u = -(p-1)\log v,$$
equation (\ref{equ0}) becomes
\begin{align}\label{equ21}
\Delta_pu -|\nabla u|^p-be^{cu}=0,
\end{align}
where
\begin{align*}
b = a(p-1)^{p-1},\quad  c = \frac{p-q-1}{p-1}.
\end{align*}
Denote $f = |\nabla u|^2$. Then the linearization operator $\sL$ of the $p$-Laplace operator can be rewritten as
\begin{align}\label{linea}
\sL(\psi)=\di\left(f^{p/2-1}A(\nabla \psi)\right),
\end{align}
with
\begin{align}\label{defofA}
A(\nabla\psi) = \nabla\psi+(p-2)f^{-1}\la\nabla \psi,\nabla u\ra\nabla u.
\end{align}
Next, we calculate the explicit expression $\sL(f^\alpha)$ for any $\alpha>0$ that will play a key role in our proof.
	
\begin{lem}
For any $\alpha >0$, the equality
\begin{align}\label{bochner1}
\begin{split}
\sL (f^{\alpha}) = &\alpha\left(\alpha+\frac{p}{2}-2\right)f^{\alpha+\frac{p}{2}-3}|\nabla f|^2+2\alpha f^{\alpha+\frac{p}{2}-2} \left(|\nabla\nabla u|^2 + \ric(\nabla u,\nabla u) \right)\\
&+\alpha(p-2)(\alpha-1)f^{\alpha+\frac{p}{2}-4}\langle\nabla f,\nabla u\rangle^2 + 2\alpha f^{\alpha-1}\langle\nabla\Delta_p u,\nabla u\rangle
\end{split}
\end{align}
holds point-wisely in $\{x:f(x)>0\}$.
\end{lem}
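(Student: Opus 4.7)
The plan is to expand $\sL(f^\alpha)$ directly from \eqref{linea}--\eqref{defofA}, apply the classical Bochner identity to $\Delta f$, and then repackage the resulting $\Delta u$-contributions into $\langle\nabla\Delta_p u,\nabla u\rangle$ via the identity linking $\Delta_p u$ and $\Delta u$. All manipulations are pointwise on $\{f>0\}$, where the regularity facts recalled before the lemma guarantee $u=-(p-1)\log v\in C^\infty$, so that $\nabla^2 u$ and $\nabla^3 u$ make classical sense. The first step uses that, with $\nabla u$ fixed, $\xi\mapsto A(\xi)$ is linear; since $\nabla f^\alpha=\alpha f^{\alpha-1}\nabla f$ this gives $A(\nabla f^\alpha)=\alpha f^{\alpha-1}A(\nabla f)$, hence
\[
\sL(f^\alpha)=\alpha\,\di\bigl(f^{\alpha+p/2-2}A(\nabla f)\bigr)=\alpha(\alpha+p/2-2)f^{\alpha+p/2-3}\langle\nabla f,A(\nabla f)\rangle+\alpha f^{\alpha+p/2-2}\di A(\nabla f).
\]
Expanding $A$ explicitly gives $\langle\nabla f,A(\nabla f)\rangle=|\nabla f|^2+(p-2)f^{-1}\langle\nabla f,\nabla u\rangle^2$, producing the stated $|\nabla f|^2$-term together with an auxiliary contribution $\alpha(\alpha+p/2-2)(p-2)f^{\alpha+p/2-4}\langle\nabla f,\nabla u\rangle^2$.

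The remaining task is to unwind $\di A(\nabla f)=\Delta f+(p-2)\di\bigl(f^{-1}\langle\nabla f,\nabla u\rangle\nabla u\bigr)$. The classical Bochner identity $\tfrac12\Delta f=|\nabla^2 u|^2+\ric(\nabla u,\nabla u)+\langle\nabla u,\nabla\Delta u\rangle$ supplies the Hessian and Ricci terms with the correct coefficient $2\alpha$ after multiplication by $\alpha f^{\alpha+p/2-2}$. For the second divergence I would use the product rule to obtain
\[
\di\bigl(f^{-1}\langle\nabla f,\nabla u\rangle\nabla u\bigr)=-f^{-2}\langle\nabla f,\nabla u\rangle^2+f^{-1}\bigl\langle\nabla\langle\nabla f,\nabla u\rangle,\nabla u\bigr\rangle+f^{-1}\langle\nabla f,\nabla u\rangle\Delta u,
\]
and expand $\langle\nabla\langle\nabla f,\nabla u\rangle,\nabla u\rangle$ via $f_i=2u_{ij}u_j$ to produce further Hessian- and $|\nabla f|^2$-pieces. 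Finally, to convert residual $\Delta u$-terms into $\langle\nabla\Delta_p u,\nabla u\rangle$, I would differentiate $\Delta_p u=f^{(p-2)/2}\Delta u+\tfrac{p-2}{2}f^{(p-4)/2}\langle\nabla f,\nabla u\rangle$ along $\nabla u$ and substitute.

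The main obstacle is purely the bookkeeping in this final assembly: the coefficient $\alpha(\alpha+p/2-2)(p-2)$ of $\langle\nabla f,\nabla u\rangle^2$ coming from the radial piece must combine with the contributions from $\di A(\nabla f)$ and from the $\Delta u\leftrightarrow\Delta_p u$ substitution to collapse to precisely $\alpha(\alpha-1)(p-2)$, while the various spurious Hessian and $|\nabla f|^2$ pieces must cancel exactly. As a sanity check, specializing $\alpha=1$ kills the $(\alpha-1)$-factor and recovers the standard $p$-Bochner identity, confirming that the chain-rule factor arising from differentiating $f^\alpha$ instead of $f$ is the sole source of the $\langle\nabla f,\nabla u\rangle^2$-term.
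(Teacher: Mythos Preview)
Your proposal is correct and follows essentially the same route as the paper: product-rule expansion of the divergence, the Bochner identity for $\Delta f$, and the substitution for $\langle\nabla\Delta_p u,\nabla u\rangle$. The paper organizes the split as $\alpha\langle\nabla f^{\alpha-1},f^{p/2-1}A(\nabla f)\rangle+\alpha f^{\alpha-1}\sL(f)$ rather than pulling all powers of $f$ outside, and it keeps the term $\langle\nabla\langle\nabla f,\nabla u\rangle,\nabla u\rangle$ intact (it cancels directly against the same term coming from the $\langle\nabla\Delta_p u,\nabla u\rangle$ identity, so there is no need to expand it into Hessian pieces as you suggest), but these are purely cosmetic differences in bookkeeping.
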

	
\begin{proof}
By the definition of $A$ in (\ref{defofA}), we have
$$A\Big(\nabla (f^{\alpha})\Big) = \alpha f^{\alpha-1}\nabla f + \alpha(p-2)f^{\alpha-2}\langle\nabla f,\nabla u\rangle \nabla u = \alpha f^{\alpha-1}A(\nabla f).$$
Hence
\begin{align*}
\sL(f^{\alpha}) & = \alpha \text{div}\Big( f^{\alpha-1}f^{\frac{p}{2}-1}A(\nabla f)\Big)= \alpha \Big\langle\nabla( f^{\alpha-1}),f^{\frac{p}{2}-1}A(\nabla f)\Big\rangle + \alpha f^{\alpha-1}\mathcal{L} (f).
\end{align*}
A straightforward computation shows that
\begin{align}\label{equ2.6}
\alpha \Big\langle\nabla( f^{\alpha-1}),f^{\frac{p}{2}-1}A(\nabla f)\Big\rangle=&\Big\langle \alpha(\alpha-1)f^{\alpha-2}\nabla f, f^{\frac{p}{2}-1}\nabla f + (p-2)f^{\frac{p}{2}-2}\langle\nabla f,\nabla u\rangle\nabla u\Big\rangle,
\end{align}
and
\begin{align}\label{equ2.7}
\begin{split}
\alpha f^{\alpha-1}\sL (f)=& \alpha f^{\alpha-1}\Big(\left(\frac{p}{2}-1\right)f^{\frac{p}{2}-2}|\nabla f|^2 + f^{\frac{p}{2}-1}\Delta f + (p-2)\left(\frac{p}{2}-2\right)f^{\frac{p}{2}-3}\langle\nabla f,\nabla u\rangle^2\\
&+ (p-2)f^{\frac{p}{2}-2}\langle\nabla\langle\nabla f,\nabla u\rangle,\nabla u\rangle + (p-2)f^{\frac{p}{2}-2}\langle\nabla f,\nabla u\rangle\Delta u\Big).
\end{split}
\end{align}
Combining \eqref{equ2.6} and \eqref{equ2.7} yields
\begin{align}\label{equ2.9}
\begin{split}
\sL(f^\alpha)=&\alpha\left(\alpha+\frac{p}{2}-2\right)f^{\alpha+\frac{p}{2}-3}|\nabla f|^2 + \alpha f^{\alpha+\frac{p}{2}-2}\Delta f\\
&+\alpha(p-2)\left(\alpha+\frac{p}{2}-3\right)f^{\alpha+\frac{p}{2}-4}\langle\nabla f,\nabla u\rangle^2\\
&+ \alpha(p-2)f^{\alpha+\frac{p}{2}-3}\langle\nabla\langle\nabla f,\nabla u\rangle,\nabla u\rangle +\alpha(p-2)f^{\alpha+\frac{p}{2}-3}\langle\nabla f,\nabla u\rangle\Delta u.
\end{split}
\end{align}
		
Notice that, by the definition of the $p$-Laplacian we have
\begin{align*}
\langle\nabla\Delta_p u,\nabla u\rangle &= \left(\frac{p}{2}-1\right)\left(\frac{p}{2}-2\right)f^{\frac{p}{2}-3}\langle\nabla f,\nabla u\rangle^2 + \left(\frac{p}{2}-1\right)f^{\frac{p}{2}-2}\langle\nabla\langle\nabla f,\nabla u\rangle,\nabla u\rangle\\
&\quad+ \left(\frac{p}{2}-1\right)f^{\frac{p}{2}-2}\langle\nabla f,\nabla u\rangle\Delta u + f^{\frac{p}{2}-1}\langle\nabla\Delta u,\nabla u\rangle.
\end{align*}
Hence, the last term of the right hand side of (\ref{equ2.9}) can be rewritten as
		\begin{align}\label{last}
			\begin{split}
				\alpha(p-2)f^{\alpha+\frac{p}{2}-3}\langle\nabla f,\nabla u\rangle\Delta u
				&
				=
				2\alpha f^{\alpha-1}\langle\nabla\Delta_p u,\nabla u\rangle
				- 2\alpha f^{\alpha+\frac{p}{2}-2}\langle\nabla\Delta u,\nabla u\rangle
				\\
				&
				\quad- \alpha(p-2)\left(\frac{p}{2}-2\right)f^{\alpha+\frac{p}{2}-4}\langle\nabla f,\nabla u\rangle^2
				\\
				&
				\quad- \alpha(p-2)f^{\alpha+\frac{p}{2}-3}\langle\nabla\langle\nabla f,\nabla u\rangle,\nabla u\rangle.
			\end{split}
		\end{align}
Moreover, the Bochner formula tells us that
$$\langle\nabla\Delta u,\nabla u\rangle=\frac{1}{2}\Delta f - |\nabla\nabla u|^2 - \ric(\nabla u,\nabla u).$$
By substituting the above and \eqref{last} into \eqref{equ2.9}, we finally arrive at
\begin{align*}
\sL(f^{\alpha}) = &\alpha\left(\alpha+\frac{p}{2}-2\right)f^{\alpha+\frac{p}{2}-3}|\nabla f|^2 + 2\alpha f^{\alpha+\frac{p}{2}-2} \left(|\nabla\nabla u|^2 + \ric(\nabla u,\nabla u) \right)\\
&+\alpha(p-2)(\alpha-1)f^{\alpha+\frac{p}{2}-4}\langle\nabla f,\nabla u\rangle^2 +2\alpha f^{\alpha-1}\langle\nabla\Delta_p u,\nabla u\rangle.
\end{align*}
We finish the proof of the lemma.
\end{proof}

\subsection{Precise estimate of $\sL$.}\
\\
In this subsection, we shall  prove a precise estimate for $\sL(f^\alpha)$ when $v$ is a positive solution to equation \eqref{equ0}.
\begin{lem}\label{lem31}
Let $u$ be a solution of equation (\ref{equ21}) on $(M,g)$.  Denote
$$f=|\nabla u|^2 \ \ \ \text{and}\ \ \ a_1=\left|p-\frac{2(p-1)}{n-1}\right|.$$ Then the following holds point-wisely in $\{x\in M:f(x)>0\}$:
\begin{align}\label{n1}
\sL(f^{\alpha})\geq 2\alpha f^{\alpha+\frac{p}{2}-2}\left(\beta_{n,p,q ,\alpha}  f^{2 } - \ric_- f -\frac{a_1}{2}  f^{ \frac{1}{2}}|\nabla f|\right),
\end{align}
provided
\begin{enumerate}
\item $\alpha\in[1,\infty)$ and $\beta_{n,p,q ,\alpha} =1/(n-1)$ when $$a\left(\frac{n+1}{n-1}-\frac{q}{p-1}\right)\geq0,$$ 			
			
\item $\alpha>\alpha_0$, where
$$\alpha_0(n,p,q)=\frac{\frac{4}{n-1}+(p-n)\left(\frac{n+1}{n-1}-\frac{q}{p-1}\right)^2}{2\left(\frac{4}{n-1}-(n-1)
(\frac{n+1}{n-1}-\frac{q}{p-1})^2\right)},$$
and
\begin{align*}
\beta_{n,p,q,\alpha}=\frac{1}{n-1}-\left(\frac{n+1}{n-1}-\frac{q}{p-1}\right)^2\frac{ (2\alpha-1)(n-1)+p-1  }{4(2\alpha-1)}>0
\end{align*}
\noindent{when}
\begin{align*}
p-1<q<\frac{n+3}{n-1}(p-1).
\end{align*}	
\end{enumerate}
\end{lem}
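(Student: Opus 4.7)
The plan starts from the Bochner-type identity \eqref{bochner1}. Dividing through by $2\alpha f^{\alpha+p/2-2}$, the desired inequality \eqref{n1} reduces to bounding a sum of five terms from below by $\beta_{n,p,q,\alpha}f^2 - \ric_-f - (a_1/2)\sqrt{f}\,|\nabla f|$. The overall strategy is to rewrite the sum in terms of the two Hessian scalars $u_{11} := \langle\nabla f,\nabla u\rangle/(2f)$ (which is the Hessian of $u$ in the direction $\nabla u/|\nabla u|$) and $\Delta u$, apply a refined Kato estimate for $|\nabla\nabla u|^2$ together with the Ricci bound $\ric(\nabla u,\nabla u)\ge -\ric_- f$, and then analyze the resulting quadratic form in $(u_{11},\Delta u)$.

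Concretely, first I would use the transformed equation \eqref{equ21} to write
\[
\langle\nabla\Delta_p u,\nabla u\rangle = p f^{p/2}u_{11} + cbe^{cu}f,
\]
and eliminate $be^{cu}$ through the identity $be^{cu} = \Delta_p u - f^{p/2} = f^{p/2-1}(\Delta u + (p-2)u_{11} - f)$. Next, picking an orthonormal frame with $e_1=\nabla u/|\nabla u|$ at a point where $f>0$, the relation $\nabla_j f = 2\sqrt{f}\,u_{1j}$ gives $\sum_j u_{1j}^2 = |\nabla f|^2/(4f)$, and the Cauchy--Schwarz estimate on the traceless part of the Hessian produces the refined Kato inequality
\[
|\nabla\nabla u|^2 \ge u_{11}^2 + 2\sum_{j\ge 2}u_{1j}^2 + \frac{(\Delta u - u_{11})^2}{n-1} = -u_{11}^2 + \frac{|\nabla f|^2}{2f} + \frac{(\Delta u - u_{11})^2}{n-1}.
\]
Combining everything, one obtains an expression of the form $(\text{positive})\cdot|\nabla f|^2/f + (\text{coeff})\cdot u_{11}^2 + (\text{coeff})\cdot fu_{11} + (\Delta u - u_{11})^2/(n-1) + cf\Delta u + (\text{Ricci and }f^2\text{ terms})$.

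The two cases of the lemma correspond to two different ways of disposing of the $\Delta u$ dependence. In case (1), I would substitute the equation to write $\Delta u - u_{11} = f + X - (p-1)u_{11}$ with $X := be^{cu}f^{1-p/2}$, so that expanding $(\Delta u - u_{11})^2/(n-1) + cf\Delta u$ produces an $X$-dependent piece of the form $X^2/(n-1) + X[\delta f - 2(p-1)u_{11}/(n-1)]$ where $\delta := \tfrac{n+1}{n-1} - \tfrac{q}{p-1}$. Under the hypothesis $a\delta\ge 0$, the sign of $X$ equals the sign of $a$, so $X\delta\ge 0$, and the $X$-terms can be dropped by nonnegativity (after Young's bound on the residual $Xu_{11}$ cross term). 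After this the coefficient of $fu_{11}$ becomes exactly $p - 2(p-1)/(n-1)$, and Young's inequality together with the pointwise identity $|u_{11}|\le |\nabla f|/(2\sqrt{f})$ converts it to $-(a_1/2)\sqrt{f}|\nabla f|$, which explains the precise value $a_1 = |p-2(p-1)/(n-1)|$. The negative part of the $u_{11}^2$ contribution is absorbed into the $|\nabla f|^2/f$ term via $u_{11}^2 \le |\nabla f|^2/(4f)$; one checks that for $\alpha\ge 1$ the resulting $|\nabla f|^2/f$ coefficient stays nonnegative, and the surviving $f^2$ coefficient is exactly $1/(n-1)$. In case (2), where the sign of $X\delta$ is uncontrolled, I would instead treat $\Delta u$ as a free variable and complete the square in it directly in $(\Delta u - u_{11})^2/(n-1) + cf\Delta u$, which creates an extra $-c(4+(n-1)c)f^2/4$ contribution to the $f^2$ term; this quantity is nonnegative precisely when $p-1 < q < (n+3)(p-1)/(n-1)$. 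The new $fu_{11}$ coefficient is $2p - q - 1$; applying Young's inequality and then requiring positivity of the residual quadratic in $u_{11}$ gives exactly the discriminant condition $\alpha > \alpha_0$, and the leftover $f^2$ coefficient works out to the stated $\beta_{n,p,q,\alpha}$.

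The main obstacle will be the coefficient bookkeeping in the last step: after applying the refined Kato inequality, Young's inequality, and the slack bound $u_{11}^2\le |\nabla f|^2/(4f)$, one must verify that the residual $|\nabla f|^2/f$ coefficient remains nonnegative, that the $fu_{11}$ cross term yields exactly $-(a_1/2)\sqrt{f}|\nabla f|$ with the stated $a_1$, and that the $f^2$ coefficient and the threshold on $\alpha$ crystallize to the explicit $\beta_{n,p,q,\alpha}$ and $\alpha_0(n,p,q)$ in the statement. A secondary subtlety is that cases (1) and (2) require opposite strategies for the $\Delta u$ variable: case (1) requires substituting the equation in order to exploit the sign of $a\delta$, whereas case (2) requires keeping $\Delta u$ free so that completing the square introduces the favorable $-c(4+(n-1)c)f^2/4$ contribution.
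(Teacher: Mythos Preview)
Your Case~1 analysis is essentially correct and parallels the paper's argument: substitute $\sum_{i\ge2}u_{ii}=\Delta u-u_{11}=f+X-(p-1)u_{11}$ with $X=be^{cu}f^{1-p/2}$, note that the cross term $fu_{11}$ then carries the coefficient $p-2(p-1)/(n-1)$ (explaining $a_1$), and under $a\delta\ge0$ drop the nonnegative pieces $\delta Xf$ and $(X-(p-1)u_{11})^2/(n-1)$. (Your intermediate ``Young on the $Xu_{11}$ cross term'' is unnecessary---the whole square $(X-(p-1)u_{11})^2$ is nonnegative and can simply be discarded---but this does not affect correctness.)

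Your Case~2 strategy, however, does \emph{not} produce the stated constants. If you complete the square in $\Delta u$ inside $(\Delta u-u_{11})^2/(n-1)+cf\Delta u$, the emerging $fu_{11}$ coefficient is $2p-q-1$, not $p-2(p-1)/(n-1)$; bounding that via $|u_{11}|\le|\nabla f|/(2\sqrt f)$ would give the wrong $a_1$, and absorbing it instead by Young against the $u_{11}^2$ budget $(p-1)(2\alpha-1)$ yields
\[
\tilde\beta=\frac{1}{n-1}-\frac{(n-1)\delta^2}{4}-\frac{(2p-q-1)^2}{4(p-1)(2\alpha-1)},
\]
which equals the claimed $\beta_{n,p,q,\alpha}$ only when $(2p-q-1)^2=(p-1)^2\delta^2$, i.e.\ only when $a_1=p-2(p-1)/(n-1)$ vanishes. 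In general $2p-q-1=(p-1)\delta+\bigl(p-\tfrac{2(p-1)}{n-1}\bigr)$, so your $\tilde\beta$ and the associated threshold $\tilde\alpha_0$ differ from the stated $\beta_{n,p,q,\alpha}$ and $\alpha_0$.

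The paper's route in Case~2 avoids this by \emph{not} completing the square in $\Delta u$: one keeps the same expansion as in Case~1 (so the $fu_{11}$ coefficient is again $p-2(p-1)/(n-1)$, giving the same $a_1$ in both cases), then expands $(X-(p-1)u_{11})^2/(n-1)$, applies Young to the $Xu_{11}$ cross term against the $(p-1)\bigl(2\alpha-1+\tfrac{p-1}{n-1}\bigr)u_{11}^2$ budget, and finally applies Young a second time to $\tfrac{(2\alpha-1)}{(2\alpha-1)(n-1)+p-1}X^2+\delta Xf$. It is precisely this two-step Young on the $X$-terms (rather than on $\Delta u$) that produces $\beta_{n,p,q,\alpha}=\tfrac1{n-1}-\delta^2\tfrac{(2\alpha-1)(n-1)+p-1}{4(2\alpha-1)}$. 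Your approach is salvageable if you first split $2p-q-1=\bigl(p-\tfrac{2(p-1)}{n-1}\bigr)+(p-1)\delta$, send the first piece to $-(a_1/2)\sqrt f\,|\nabla f|$, and only then Young the remaining $(p-1)\delta fu_{11}$ against $(p-1)(2\alpha-1)u_{11}^2$; that does recover the stated $\beta$, but this crucial splitting is absent from your proposal.
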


\begin{proof}
Let $\{e_1,e_2,\ldots, e_n\}$ be  an orthonormal frame of $TM$ on a domain of  $\{x\in M:f(x)>0\}$ such that $\nabla u=|\nabla u|e_1$. Under this frame, there holds $$u_1=|\nabla u|=f^{1/2},\quad \text{and}\quad u_i=0 \quad \text{for} \ \ 2\leq i\leq n.$$
Moreover, notice that
\begin{eqnarray*}
2fu_{11}&=&2|\nabla u|^2\nabla^2u(e_1,e_1)\\
%&=&2|\nabla u|^2\left(e_1|\nabla u|-\nabla_{e_1}e_1(u)\right)\\
%&=&2|\nabla u|^2\langle e_1,\nabla|\nabla u|\rangle\\
&=&\langle \nabla u,\nabla f\rangle,\end{eqnarray*}
and
\begin{eqnarray*}
|\nabla f|^2&=&\sum_{i=1}^n|2u_1u_{1i}|^2=4f\sum_{i=1}^nu_{1i}^2.
\end{eqnarray*}
Hence,
\begin{equation}\label{equ:3.1}
u_{11} = \frac{1}{2}f^{-1}\la\nabla u,\nabla f\ra\ \ \ \text{and}\ \ \ \frac{|\nabla f|^2}{f}=4\sum_{i=1}^n u_{1i}^2.
\end{equation}
Meanwhile, $\Delta_p u$ has the following expression (cf. \cite{HWW, MR2518892}),
\begin{align*}
\Delta_p u
=&f^{\frac{p}{2}-1}\left((p-1)u_{11}+\sum_{i=2}^nu_{ii}\right).
\end{align*}
Substituting the above equality into equation (\ref{equ21}), we obtain:
\begin{align}\label{equ3.3}
(p-1)u_{11}+\sum_{i=2}^nu_{ii}=f+be^{cu}f^{1-\frac{p}{2}}.
\end{align}
By Cauchy inequality, we arrive at
\begin{align}\label{equ3.5}
|\nabla\nabla u|^2\geq& \sum_{i=1}^nu_{1i}^2 + \sum_{i=2}u_{ii}^2 \geq \frac{|\nabla f|^2}{4f} +\frac{1}{n-1}\left(\sum_{i=2}u_{ii}\right)^2.
\end{align}
It follows from \eqref{equ21} that
\begin{align*}
\langle\nabla\Delta_p u,\nabla u\rangle =  pf^{\frac{p}{2} }u_{11}+bce^{cu}f.
\end{align*}
Substituting \eqref{equ:3.1}, (\ref{equ3.5}) and the above equality into (\ref{bochner1}) yields
\begin{align}\label{equ3.60}
\begin{split}
\frac{f^{2-\alpha-\frac{p}{2}}}{2\alpha} \sL \left(f^{\alpha}\right)\geq &\frac{1}{2} \left(\alpha+\frac{p-3}{2}\right) \frac{|\nabla f|^2}{f}
+\frac{1}{n-1}\left(\sum_{i=2}u_{ii}\right)^2+ \ric(\nabla u, \nabla u)\\
&+2(p-2)(\alpha-1)u_{11}^2 + f^{1-\frac{p}{2}}\left(pf^{\frac{p}{2} }u_{11}+bce^{cu}f\right).
\end{split}
\end{align}
Furthermore, by the facts that
$$\frac{|\nabla f|^2}{f}\geq 4u_{11}^2,\ \ \ \text{and}\ \ \ \alpha+\frac{p-3}{2}>0,$$
we can infer from \eqref{equ3.60} that
\begin{align}\label{equ3.6}
\begin{split}
\frac{f^{2-\alpha-\frac{p}{2}}}{2\alpha} \sL \left(f^{\alpha}\right) \geq & 2\left(\alpha+\frac{p-3}{2}\right) u_{11}^2 +\frac{1}{n-1}\left(\sum_{i=2}^nu_{ii}\right)^2+ \ric(\nabla u, \nabla u)\\
&+2(p-2)(\alpha-1)u_{11}^2 + f^{1-\frac{p}{2}}\left(pf^{\frac{p}{2} }u_{11}+bce^{cu}f\right).
\end{split}
\end{align}
By \eqref{equ3.3}, we have
\begin{align*}
\left(\sum_{i=2}^nu_{ii}\right)^2 =&\left(f+be^{cu}f^{1-\frac{p}{2}}-(p-1)u_{11}\right)^2\\
=& f^2+\left(be^{cu}f^{1-\frac{p}{2}}-(p-1)u_{11}\right)^2+2be^{cu}f^{2-\frac{p}{2}}-2f(p-1)u_{11}.
\end{align*}
Substituting the above inequality into (\ref{equ3.6}) yields
\begin{align}\label{equ3.7}
\begin{split}
\frac{f^{2-\alpha-\frac{p}{2}}}{2\alpha} \sL\left(f^\alpha\right)\geq &(p-1)(2\alpha-1)u_{11}^2-\ric_- f + \left(p-\frac{2(p-1)}{n-1}\right)f u_{11}+ \frac{f^2}{n-1}\\
&+b\left(c+\frac{2}{n-1}\right)e^{cu}f^{2-\frac{p}{2}}+\frac{1}{n-1}   \left(be^{cu}f^{1-\frac{p}{2}}-(p-1)u_{11}\right)^2.
\end{split}
\end{align}
		
Now, denote
$$a_1 = \left|p-\frac{2(p-1)}{n-1}\right|.$$
It follows from (\ref{equ:3.1}) that
\begin{align*}
2\left(p-\frac{2(p-1)}{n-1}\right)f u_{11}\geq -a_1f^{\frac{ 1}{2} }|\nabla f|.
\end{align*}
Hence,
\begin{align}\label{equa:3.7}
\begin{split}
\frac{f^{2-\alpha-\frac{p}{2}}}{2\alpha} \sL(f^\alpha)\geq& (p-1)(2\alpha-1)u_{11}^2-\ric_- f-\frac{a_1}{2}f^{\frac{ 1}{2} }|\nabla f|+ \frac{f^2}{n-1}\\
&+b\left(c+\frac{2}{n-1}\right)e^{cu}f^{2-\frac{p}{2}}
+\frac{1}{n-1}\left(  be^{cu}f^{1-\frac{p}{2}}-(p-1)u_{11}\right)^2.
\end{split}
\end{align}
		
\noindent\textbf{Case $1$:} the constants $a$, $p$ and $q$ satisfy $$a\left(\frac{n+1}{n-1}-\frac{q}{p-1}\right)\geq0.$$
For this case we have
$$
be^{cu}f\left(c+\frac{2}{n-1}\right)=a(p-1)^{p-1}e^{cu}f\left(\frac{n+1}{n-1}-\frac{q}{p-1}\right)\geq 0.
$$
Since $\alpha\geq 1$, by discarding some non-negative terms in (\ref{equa:3.7}), we obtain
\begin{align*}
\sL(f^\alpha)\geq&  2\alpha f^{\alpha+\frac{p}{2}-2}\left(\frac{  f^{2}}{n-1}- \ric_- f -\frac{a_1}{2}f^{\frac{1}{2} }|\nabla f|\right),
\end{align*}
which is just the inequality in the first case of \lemref{lem31}.
		
By expanding the last term of the right hand side of (\ref{equa:3.7}), we obtain
\begin{align}\label{2.12}
\begin{split}
\frac{f^{2-\alpha-\frac{p}{2}}}{2\alpha} \sL (f^{\alpha})\geq &(p-1)\left(2\alpha-1+\frac{p-1}{n-1}\right)u_{11}^2 - \ric_- f\\
&+b\left(\frac{n+1}{n-1}-\frac{q}{p-1}\right)e^{cu}f^{2-\frac{p}{2}}-\frac{a_1}{2}f^{\frac{ 1}{2} }|\nabla f|+\frac{f^2}{n-1}\\
&+\frac{1}{n-1}\left(  b^2e^{2cu}f^{2-p}-2(p-1)be^{cu}f^{1-\frac{p}{2}}u_{11}\right).
\end{split}
\end{align}
		
\noindent\textbf{Case $2$ :} the constants $a$, $p$ and $q$ satisfy
\begin{align*}
p-1<q<\frac{n+3}{n-1}(p-1).
\end{align*}
In the present situation, the condition is equivalent to
\begin{align*}
\frac{1}{n-1}-\frac{n-1}{4}\left(\frac{n+1}{n-1}-\frac{q}{p-1}\right)^2>0.
\end{align*}
This implies
\begin{align*}
\lim_{\alpha\to\infty}\frac{1}{n-1}-\left(\frac{n+1}{n-1}-\frac{q}{p-1}\right)^2\frac{(2\alpha-1)(n-1)+p-1}{4(2\alpha-1)}>0.
\end{align*}
		
By the monotonicity of the left hand side of the above with repect to $\alpha$, it's easy to see that if we denote
$$\alpha_0(n,p,q)=\frac{\frac{4}{n-1}+(p-n)\left(\frac{n+1}{n-1}-\frac{q}{p-1}\right)^2}{2\left(\frac{4}{n-1}-(n-1)\left(\frac{n+1}{n-1}
-\frac{q}{p-1}\right)^2\right)},$$
then $\beta_{n,p,q,\alpha}>0$ when $\alpha>\alpha_0$.

On the other hand, by using the inequality $\lambda^2-2\lambda\mu\geq -\mu^2$ we have
\begin{align}\label{equa:2.13}
\begin{split}
&(p-1)\left(2\alpha-1+\frac{p-1}{n-1}\right)u_{11}^2-2\frac{(p-1)}{n-1}be^{cu}f^{1-\frac{p}{2}}u_{11}\\
\geq &-\frac{(p-1)b^2e^{2cu}f^{2-p}}{((2\alpha-1)(n-1)+p-1)(n-1)} .
\end{split}
\end{align}
Combining (\ref{2.12}) and (\ref{equa:2.13}) yields
\begin{align}\label{equa:2.14}
\begin{split}
\frac{f^{2-\alpha-\frac{p}{2}}}{2\alpha} \sL(f^\alpha)\geq &\frac{(2\alpha-1) b^2e^{2cu}f^{2-p}}{(2\alpha-1)(n-1)+p-1}-\ric_-f -\frac{a_1}{2}f^{\frac{1}{2}}|\nabla f|\\
& + b\left(\frac{n+1}{n-1}-\frac{q}{p-1}\right)e^{cu}f^{2-\frac{p}{2}}+\frac{f^2}{n-1}.
\end{split}
\end{align}
Applying the relation $\lambda^2+2\lambda\mu\geq -\mu^2$ again, we have
\begin{align}\label{equa:3.15}
\begin{split}
&\frac{(2\alpha-1) b^2e^{2cu}f^{2-p}}{ (2\alpha-1)(n-1)+p-1} + b\left(\frac{n+1}{n-1}-\frac{q}{p-1}\right)e^{cu}f^{2-\frac{p}{2}}\\
\geq& -\left(\frac{n+1}{n-1}-\frac{q}{p-1}\right)^2\frac{ (2\alpha-1)(n-1)+p-1  }{4(2\alpha-1) }f^2.
\end{split}
\end{align}
Substituting (\ref{equa:3.15}) into (\ref{equa:2.14}), we arrive at
\begin{align*}
\frac{f^{2-\alpha-\frac{p}{2}}}{2\alpha} \sL (f^\alpha)\geq&\left(\frac{1}{n-1}-\left(\frac{n+1}{n-1}-\frac{q}{p-1}\right)^2
\frac{(2\alpha-1)(n-1)+p-1}{4(2\alpha-1) }\right)f^2\\
&- \ric_- f-\frac{a_1}{2}f^{\frac{1}{2}}|\nabla f|.
\end{align*}
Hence,
\begin{align*}
\sL(f^\alpha)\geq&2\beta_{n,p,q,\alpha} \alpha f^{\alpha+\frac{p}{2} } - 2\alpha\ric_{-}f^{\alpha+\frac{p}{2}-1}
-a_1\alpha f^{\alpha+\frac{p}{2}-\frac{3}{2}}|\nabla f| ,
\end{align*}
where $\beta_{n,p,q,\alpha}>0$ is defined in Lemma \ref{lem31}. Thus, we complete the proof of this lemma.
\end{proof}

	%Denote $\alpha_\gamma=\alpha_0+\gamma$ and $\beta_{n,p,q,\gamma} = \beta_{n,p,q,\alpha_\gamma}$. Fom the definition of $\beta_{n,p,q,\alpha}$ (see \eqref{defofdelta}), it's easy to see that
	%\begin{align*}
		%\beta_{n,p,q} < \frac{1 }{n-1}.
	%\end{align*}
	%So, we actually have proved that there holds
	%\begin{align}
		%\label{equa3.14}
		%\mathcal L(f^{\alpha_\gamma})\geq
		%2\alpha_\gamma f^{\alpha_\gamma+\frac{p}{2}-2}\left(\beta_{n,p,q ,\gamma}  f^{2 } - \ric_- f
		%-\frac{a_1}{2}  f^{ \frac{1}{2}}|\nabla f|\right),
	%\end{align}
	%if one of the conditions (1) and (2) in \lemref{linear} is satisfied.	

\subsection{Approximation procedure and key integral inequality}\

Now, we are going to establish a key integral inequality of $f=|\nabla u|^2$.
\begin{lem}\label{41}
Let $\Omega = B_R(o)\subset M$ be a geodesic ball. Define $\alpha$ and $\beta_{n,p,q,\alpha}$ as in \lemref{lem31}. Denote
$$\theta(\alpha,t,p)=\alpha+t+\frac{p}{2}-1.$$
Then, for \begin{equation}\label{tc}
t\in\left(\frac{a_1^2}{a_2\beta_{n,p,q,\alpha}},\,\, \infty\right),
\end{equation}
the following inequality holds true
\begin{equation}\label{924}
\frac{\beta_{n,p,q,\alpha}}{2} \int_\Omega f^{\theta+1}\eta^2 + \frac{a_2t}{\theta^2}\mathbb{S}_{\chi}(M) \left\|f^{\theta} \eta^2\right\|_{L^{\chi}}
\leq \int_\Omega \ric_-f^{\theta}\eta^2+\gamma(\alpha,p,t)\int_\Omega f^{\theta}|\nabla\eta|^2,
\end{equation}	
where
$$a_1 = \left|p-\frac{2(p-1)}{n-1}\right|,\ \ \ a_2=\min\{1,\, p-1\}, \ \ \ \text{and}\ \ \ \gamma(\alpha,p,t)=\frac{2(p+1)^2 }{a_2t}+\frac{a_2t}{\theta^2}.$$	
\end{lem}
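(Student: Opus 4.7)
The plan is to test the pointwise inequality \eqref{n1} from Lemma~\ref{lem31} against $f^t\eta^2$, integrate by parts against $\sL$, absorb the resulting cross terms by Young's inequality, and finish by invoking the $\chi$-type Sobolev inequality on the composite function $g=f^{\theta/2}\eta$. The threshold \eqref{tc} on $t$ should emerge as the precise condition which leaves a positive fraction of $\int f^{\theta-2}|\nabla f|^2\eta^2$ after all cross-term absorption, and which then gets reshaped into the Sobolev quantity. The main analytic obstacle is that $\sL$ degenerates at critical points of $u$ and the Bochner-type identity \eqref{bochner1} is only classical on $\{f>0\}$; before any integration by parts I would regularise the $p$-Laplace operator by replacing $|\nabla u|^{p-2}$ with $(|\nabla u|^2+\epsilon)^{(p-2)/2}$ as in \cite{MR2518892, HWW}, perform the identity on the smooth approximations $u_\epsilon$, and pass to the limit $\epsilon\downarrow 0$ using the $C^{1,\beta}$ regularity and $W^{2,2}_{\mathrm{loc}}(\Omega\setminus\Omega_{cr})$ bounds recalled before Definition~\ref{def1}.

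After this justification, multiplying \eqref{n1} by $f^t\eta^2\geq 0$ and integrating gives $\int\sL(f^\alpha)f^t\eta^2\geq 2\alpha\beta_{n,p,q,\alpha}\int f^{\theta+1}\eta^2-2\alpha\int\ric_-f^\theta\eta^2-\alpha a_1\int f^{\theta-1/2}|\nabla f|\eta^2$, while the divergence form of $\sL$ in \eqref{linea} and integration by parts yield
\begin{align*}
\int_\Omega\sL(f^\alpha)f^t\eta^2=-\alpha t\int_\Omega f^{\theta-2}\la A(\nabla f),\nabla f\ra\eta^2-2\alpha\int_\Omega f^{\theta-1}\eta\la A(\nabla f),\nabla\eta\ra.
\end{align*}
From the explicit form of $A$ in \eqref{defofA}, Cauchy--Schwarz delivers the two bounds $\la A(\nabla f),\nabla f\ra\geq a_2|\nabla f|^2$ and $|A(\nabla f)|\leq (p+1)|\nabla f|$ for all $p>1$, where $a_2=\min\{1,p-1\}$. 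Combining the two sides and dividing by $2\alpha$, I control $\beta_{n,p,q,\alpha}\int f^{\theta+1}\eta^2+\tfrac{ta_2}{2}\int f^{\theta-2}|\nabla f|^2\eta^2$ from above by $\int\ric_-f^\theta\eta^2$ together with the two cross terms $\tfrac{a_1}{2}\int f^{\theta-1/2}|\nabla f|\eta^2$ and $(p+1)\int f^{\theta-1}\eta|\nabla f||\nabla\eta|$.

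The first cross term is then split by Young's inequality between $f^{\theta+1}\eta^2$ and $f^{\theta-2}|\nabla f|^2\eta^2$, depositing just enough to leave $\beta_{n,p,q,\alpha}/2$ of the $f^{\theta+1}\eta^2$-coefficient on the left; the second cross term is split between $f^{\theta-2}|\nabla f|^2\eta^2$ and $f^\theta|\nabla\eta|^2$. The condition \eqref{tc} is exactly the threshold that keeps the residual coefficient of $\int f^{\theta-2}|\nabla f|^2\eta^2$ strictly positive after both absorptions. Finally, applying \eqref{chi} to $g=f^{\theta/2}\eta$ and expanding
\begin{align*}
|\nabla g|^2=\tfrac{\theta^2}{4}f^{\theta-2}|\nabla f|^2\eta^2+\theta f^{\theta-1}\eta\la\nabla f,\nabla\eta\ra+f^\theta|\nabla\eta|^2,
\end{align*}
with a further Young's inequality on the middle term, converts the surviving $\int f^{\theta-2}|\nabla f|^2\eta^2$ into $\tfrac{a_2t}{\theta^2}\mathbb{S}_\chi(M)\|f^\theta\eta^2\|_{L^\chi}$ on the left-hand side, at the price of an extra $\int f^\theta|\nabla\eta|^2$ term that merges with the existing one. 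A careful bookkeeping of the Young parameters (choosing the two absorption weights so that the residual $\int f^{\theta+1}\eta^2$ coefficient is exactly $\beta_{n,p,q,\alpha}/2$ and balancing the $\int f^\theta|\nabla\eta|^2$ coefficient) then produces precisely the stated constants $\tfrac{a_2t}{\theta^2}$ and $\gamma(\alpha,p,t)=\tfrac{2(p+1)^2}{a_2t}+\tfrac{a_2t}{\theta^2}$, yielding \eqref{924}. The hard part is not this final bookkeeping but the approximation step that legitimises the integration by parts across the critical set $\Omega_{cr}$.
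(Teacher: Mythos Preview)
Your proposal is correct and follows essentially the same route as the paper: test \eqref{n1} against $f^t\eta^2$, integrate $\sL$ by parts using the structural bounds $\la A(\nabla f),\nabla f\ra\geq a_2|\nabla f|^2$ and $|A(\nabla f)|\leq(p+1)|\nabla f|$, absorb the two cross terms $\tfrac{a_1}{2}\int f^{\theta-1/2}|\nabla f|\eta^2$ and $(p+1)\int f^{\theta-1}\eta|\nabla f||\nabla\eta|$ by Young's inequality (each eating $\tfrac{a_2t}{4}$ of the gradient term), and then convert the surviving $\tfrac{a_2t}{2}\int f^{\theta-2}|\nabla f|^2\eta^2$ into the Sobolev quantity via \eqref{chi} applied to $f^{\theta/2}\eta$.

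The one place where your plan and the paper's execution differ is the handling of the critical set. You propose the Kotschwar--Ni/HWW operator regularisation $(|\nabla u|^2+\epsilon)^{(p-2)/2}$ with approximate solutions $u_\epsilon$; the paper instead keeps $u$ fixed and truncates the test function, taking $\psi=f_\epsilon^{\,t}\eta^2$ with $f_\epsilon=(f-\epsilon)^+$, so that all integrands are supported in $\{f>\epsilon\}$ where \eqref{n1} is classical. One then lets $\epsilon\downarrow0$ using $f\in C^\beta(\Omega)$, $|\nabla f|\in L^2_{\mathrm{loc}}$, and the fact (from \cite{ACF}) that $|\Omega_{cr}|=0$. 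Both approaches are legitimate; the paper's truncation is lighter since it avoids constructing and controlling an auxiliary family $u_\epsilon$.
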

	
\begin{proof} Since \lemref{lem31} only holds pointwisely on $\{x\in M: f(x)>0\}$. In order to obtain this integral estimate, we need to perform an approximation procedure as that in \cite{WZ, HWW}. Now, let $\eta\in C^{\infty}_0(\Omega,\R)$ be a non-negative and smooth function on $\Omega$ with compact support. Denote $f_\epsilon=(f-\epsilon)^+$. By multiplying $\psi = f_{\epsilon}^{t}\eta^2$ on both side of \eqref{n1}(where $t>1$ is to be determined later), we have
\begin{align*}
&-\int_\Omega\la f^{p/2-1}\nabla f^{\alpha} +(p-2)f^{p/2-2}\la\nabla f^{\alpha},\nabla u\ra\nabla u,\,\nabla \psi \ra\\
\geq &2\beta_{n,p,q,\alpha }\alpha\int_\Omega f^{\alpha+\frac{p}{2} }f^{t}_\epsilon\eta^2  -2\alpha\int_\Omega \ric_-f^{\alpha+\frac{p}{2} -1}f^{t}_\epsilon\eta^2 - a_1\alpha\int_\Omega f^{\alpha+\frac{p-3}{2} }f^{t}_\epsilon|\nabla f|\eta^2.
\end{align*}
Hence,
\begin{align}\label{equa:3.17}
\begin{split}
&-\int_\Omega (\alpha tf^{\alpha+\frac{p}{2}-2}f^{t-1}_\epsilon|\nabla f|^2\eta^2+ t\alpha(p-2)f^{\alpha+\frac{p}{2}-3}f^{t-1}_\epsilon\la\nabla f,\nabla u\ra^2\eta^2)\\
&-\int_\Omega(2\eta\alpha f^{\alpha+\frac{p}{2}-2}f^{t}_\epsilon\la\nabla f,\nabla\eta\ra +2\alpha\eta(p-2)f^{\alpha +\frac{p}{2}-3}f^{t}_\epsilon\la\nabla f,\nabla u\ra\la\nabla u, \nabla\eta\ra)\\
\geq & 2\beta_{n,p,q,\alpha }\alpha\int_\Omega f^{\alpha+\frac{p}{2} }f^{t}_\epsilon\eta^2  -2\alpha\int_\Omega \ric_-f^{\alpha+\frac{p}{2} -1}f^{t}_\epsilon\eta^2- a_1\alpha\int_\Omega f^{\alpha+\frac{p-3}{2} }f^{t}_\epsilon|\nabla f|\eta^2.
\end{split}
\end{align}
Notice that
\begin{align}\label{2.24}
f^{t-1}_\epsilon |\nabla f|^2 +(p-2)f^{t-1}_\epsilon f^{-1}\la\nabla f,\nabla u\ra^2\geq a_2 f^{t-1}_\epsilon|\nabla f|^2,
\end{align}
where $a_2 = \min\{1,\, p-1\}$ and
\begin{align}\label{2.25}
f^{t}_\epsilon\la\nabla f,\nabla\eta\ra+ (p-2)f^{t}_\epsilon f^{-1}\la\nabla f,\nabla u\ra\la\nabla u,  \nabla\eta\ra\geq -(p+1)f^{t}_\epsilon |\nabla f||\nabla\eta|.
\end{align}
Denote
$$\theta=\alpha+t+\frac{p}{2}-1.$$
Substituting (\ref{2.24}) and (\ref{2.25}) into (\ref{equa:3.17}), and then letting $\epsilon\to0$, we arrive at
\begin{align}\label{2.26}
\begin{split}
&2\beta_{n,p,q,\alpha }\int_\Omega f^{\theta+1}\eta^2 + a_2  t\int_\Omega f^{\theta-2}|\nabla f|^2\eta^2\\
\leq &2\int_\Omega \ric_-f^{\theta}\eta^2 + a_1 \int_\Omega f^{\theta-\frac{1}{2}}|\nabla f|\eta^2+2 (p+1)\int_\Omega  f^{\theta-1}|\nabla f||\nabla\eta|\eta.
\end{split}
\end{align}
Since $u\in W^{2,2}_{loc}(\Omega\setminus\Omega_{cr})\cap C^{1,\beta}(\Omega)$ and the measure of critical set $\Omega_{cr}$ is zero by a very recent result \cite[Corollary 1.6]{ACF}, we have $f\in C^\beta(\Omega)$ and $|\nabla f|\in L^2_{loc}$, and hence the integrals in the above make sense.
		
By Cauchy-inequality, we have
\begin{align}\label{2.27}
a_1 f^{\theta-\frac{1}{2}}|\nabla f|\eta^2\leq \frac{a_2t}{4}f^{\theta-2}|\nabla f|^2\eta^2 +\frac{a_1^2}{a_2t} f^{\theta+1}\eta^2,
\end{align}
and
\begin{align}\label{new}
2(p+1)f^{\theta-1}|\nabla f||\nabla\eta|\eta\leq \frac{a_2t}{4}f^{\theta-2}|\nabla f|^2\eta^2
+\frac{4(p+1)^2 }{a_2t} f^{\theta}|\nabla \eta|^2.
\end{align}
Combining the fact
$$t\in\left(\frac{a_1^2}{a_2\beta_{n,p,q,\alpha}},\,\, \infty\right),$$
we conclude by substituting \eqref{2.27} and \eqref{new} into \eqref{2.26} that
\begin{equation}\label{2.29}
\beta_{n,p,q ,\alpha}\int_\Omega f^{\theta+1}\eta^2 + \frac{a_2t}{2}\int_\Omega f^{\theta-2}|\nabla f|^2\eta^2
\leq 2\int_\Omega \ric_-f^{\theta}\eta^2 +\frac{4(p+1)^2 }{a_2t}  \int_\Omega f^{\theta}|\nabla \eta|^2.
\end{equation}
		
On the other hand,
\begin{align}\label{2.30}
\begin{split}
\frac{1}{2}\left|\nabla \left(f^{\frac{\theta}{2}}\eta \right)\right|^2\leq & \left|\nabla f^{\frac{\theta}{2}}\right|^2\eta^2 +f^{\theta}|\nabla\eta|^2\\
=&\frac{\theta^2}{4}f^{\theta-2}|\nabla f |^2\eta^2 +f^{\theta}|\nabla\eta|^2  .
\end{split}
\end{align}
Substituting (\ref{2.30}) into (\ref{2.29}) yields
\begin{align}\label{n2}
&\beta_{n,p,q,\alpha } \int_\Omega f^{\theta+1}\eta^2 + \frac{2a_2t}{\theta^2}\int_\Omega\left|\nabla \left(f^{\frac{\theta}{2} }\eta\right)\right|^2 \\ \nonumber
\leq &2\int_\Omega \ric_-f^{\theta}\eta^2 + \left(\frac{4(p+1)^2 }{a_2t}+\frac{2a_2t}{\theta^2}\right) \int_\Omega f^{\theta}|\nabla\eta|^2.
\end{align}
By Sobolev inequality, there holds
$$\mathbb{S}_{\chi}(M)\left\|f^{\frac{\theta}{2}}\eta\right\|_{L^{2\chi}(\Omega)}^2\leq \int_{\Omega}\left|\nabla \left(f^{\frac{\theta}{2}}\eta\right)\right|^2.$$
Hence,
\begin{align}\label{3.32}
\begin{split}
&\frac{\beta_{n,p,q,\alpha}}{2} \int_\Omega f^{\theta+1}\eta^2 + \frac{a_2t}{\theta^2}\mathbb{S}_{\chi}(M) \left\|f^{\theta} \eta^2\right\|_{L^{\chi}}\\
\leq &\int_\Omega \ric_-f^{\theta}\eta^2+\left(\frac{2(p+1)^2 }{a_2t}+\frac{a_2t}{\theta^2}\right)\int_\Omega f^{\theta}|\nabla\eta|^2.
\end{split}
\end{align}
We complete the proof.
\end{proof}		

\subsection{Local $L^{\theta\chi}$ bound of the gradient.} \label{sec3.3}\

Next, we are ready to show the following $L^{\theta \chi}$ bound with
$$\theta=\alpha+t+p/2-1$$
of the gradient of positive solutions to equation \eqref{equ0}.
\begin{lem}\label{lpbound}
Let $(M,g)$ be a complete manifold on which the $\chi$-type Sobolev inequality holds. Assume $u$ is a positive solution to equation \eqref{equ21} on the geodesic ball $B(o,R)\subset M$. Let $f=|\nabla u|^2$ and denote $\theta$ by
$$\theta = \alpha+t+\frac{p}{2}-1,$$
where $\alpha$ and $t$ satisfy the conditions in \lemref{lem31} and \eqref{tc} respectively. Assume further that
$$\beta(\alpha,p,t)\triangleq\frac{a_2t}{\theta^2}\mathbb{S}_{\chi}(M)-\left\|\ric_-\right\|_{L^{\frac{\chi}{\chi-1}}}>0.$$
Then there exists $a_3 = a_3(n,\alpha,p,q,t)>0$ such that
\begin{align}\label{lpbpund}
\|f \|_{L^{\theta\chi}(B_{3R/4}(o))}\leq a_3\left(\frac{V}{R^{2(\theta+1)}}\right)^{\frac{1}{\theta}},
\end{align}
where $V$ denotes the volume of geodesic ball $B_R(o)$.
\end{lem}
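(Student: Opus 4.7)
The plan is to take the key integral inequality \eqref{3.32} from \lemref{41} as the starting point and convert it into an $L^{\theta\chi}$ bound via a two-stage argument: first a self-improving iteration yielding a bound on $\int f^{\theta+1}$, then a single application of the Sobolev term in \eqref{3.32} to upgrade this to the desired $L^{\theta\chi}$ bound.

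First, I would handle the curvature term in \eqref{3.32} by H\"older's inequality:
\begin{equation*}
\int_\Omega \ric_- f^{\theta}\eta^2 \leq \|\ric_-\|_{L^{\chi/(\chi-1)}}\,\|f^{\theta}\eta^2\|_{L^{\chi}}.
\end{equation*}
The positivity assumption $\beta(\alpha,p,t) = \tfrac{a_2 t}{\theta^2}\mathbb{S}_{\chi}(M) - \|\ric_-\|_{L^{\chi/(\chi-1)}} > 0$ is exactly what is needed to absorb this term into the Sobolev term on the left. After absorption, \eqref{3.32} reduces to an inequality of the schematic form
\begin{equation*}
c_1\int_\Omega f^{\theta+1}\eta^2 + \beta(\alpha,p,t)\,\|f^{\theta}\eta^2\|_{L^{\chi}} \leq c_2\int_\Omega f^{\theta}|\nabla\eta|^2,
\end{equation*}
valid for any cutoff $\eta\in C_0^\infty(B_R(o))$, with constants depending only on $n,p,q,t,\alpha$.

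Next, for $\tfrac{3R}{4}\leq \rho < r\leq R$ I would pick a standard cutoff $\eta=\eta_{\rho,r}$ equal to $1$ on $B_\rho$, supported in $B_r$, with $|\nabla\eta|\leq 2/(r-\rho)$. Discarding the Sobolev term and estimating the right-hand side by H\"older's inequality,
\begin{equation*}
\int_{B_r} f^{\theta} \leq V^{1/(\theta+1)}\Bigl(\int_{B_r} f^{\theta+1}\Bigr)^{\theta/(\theta+1)},
\end{equation*}
followed by Young's inequality with small parameter to absorb the $J(r):=\int_{B_r}f^{\theta+1}$ factor, I would obtain the self-improving bound
\begin{equation*}
J(\rho) \leq \tfrac{1}{2}\,J(r) + \frac{C\,V}{(r-\rho)^{2(\theta+1)}}.
\end{equation*}
Applying the iteration \lemref{hl} on the interval $[3R/4, 4R/5]$ yields
\begin{equation*}
\int_{B_{4R/5}} f^{\theta+1} \leq \frac{C\,V}{R^{2(\theta+1)}}.
\end{equation*}

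Finally, I would apply the absorbed inequality one more time, now with a cutoff $\eta$ equal to $1$ on $B_{3R/4}$ and supported in $B_{4R/5}$ (so $|\nabla\eta|\leq C/R$). Keeping only the Sobolev term on the left and combining with the $L^{\theta+1}$ bound just obtained via H\"older gives
\begin{equation*}
\|f^{\theta}\eta^2\|_{L^{\chi}} \leq \frac{C}{R^{2}}\int_{B_{4R/5}} f^{\theta} \leq \frac{C\,V^{1/(\theta+1)}}{R^{2}}\Bigl(\frac{C V}{R^{2(\theta+1)}}\Bigr)^{\theta/(\theta+1)} \leq \frac{a_3^{\theta}\,V}{R^{2(\theta+1)}},
\end{equation*}
after which the identity $\|f\|_{L^{\theta\chi}(B_{3R/4})} = \|f^{\theta}\mathbf{1}_{B_{3R/4}}\|_{L^{\chi}}^{1/\theta} \leq \|f^{\theta}\eta^2\|_{L^{\chi}}^{1/\theta}$ delivers \eqref{lpbpund}. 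The one delicate step is the self-improving iteration: one must set it up so that the constant $\tfrac{1}{2}$ in front of $J(r)$ is strictly less than $1$ uniformly in $\rho,r$, and that the H\"older/Young splitting produces exactly the power $2(\theta+1)$ of $(r-\rho)$ needed in \lemref{hl}; the rest is bookkeeping of the constants $c_1, c_2, \beta, \gamma$ into the final constant $a_3 = a_3(n,\alpha,p,q,t)$.
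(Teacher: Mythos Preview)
Your argument is correct and arrives at the same bound, but the route differs from the paper's. The paper avoids the iteration step entirely by a clever choice of cutoff: instead of a plain $\eta_{\rho,r}$, it takes a single cutoff $\eta_1\in C_0^\infty(B_R)$ with $\eta_1\equiv 1$ on $B_{3R/4}$ and sets $\eta=\eta_1^{\theta+1}$. The point of the power is that then
\[
R^2|\nabla\eta|^2 \le C(n)^2(\theta+1)^2\,\eta^{\frac{2\theta}{\theta+1}},
\]
so H\"older gives
\[
\gamma\int f^\theta|\nabla\eta|^2 \le \frac{C}{R^2}\left(\int f^{\theta+1}\eta^2\right)^{\frac{\theta}{\theta+1}}V^{\frac{1}{\theta+1}},
\]
and a single Young inequality absorbs the $\int f^{\theta+1}\eta^2$ factor directly into the term $\tfrac{\beta_{n,p,q,\alpha}}{2}\int f^{\theta+1}\eta^2$ on the left of \eqref{924}. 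What remains is already the $L^{\theta\chi}$ bound, with no recourse to \lemref{hl}. Your two-stage scheme (self-improving inequality $J(\rho)\le\tfrac12 J(r)+CV/(r-\rho)^{2(\theta+1)}$ plus a final Sobolev application) is a perfectly standard and valid alternative; it trades the single algebraic trick $\eta=\eta_1^{\theta+1}$ for machinery you already have at hand. One small slip: to obtain a bound on $\int_{B_{4R/5}} f^{\theta+1}$ you should run the iteration on an interval like $[4R/5,\,R]$ (or on $[3R/4,\,R]$ and then evaluate at $t=4R/5$), not on $[3R/4,\,4R/5]$; this is cosmetic and does not affect the argument.
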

	
\begin{proof}
By H\"older's inequality, we have
\begin{equation*} \int_\Omega \ric_-f^{\theta}\eta^2\leq \left\|\ric_-\right\|_{L^{\frac{\chi}{\chi-1}}}\left\|f^{\theta}\eta^2\right\|_{L^{\chi}}.
\end{equation*}		
Substituting this into \lemref{41}, we conclude that
\begin{align}\label{924}
&\frac{\beta_{n,p,q,\alpha}}{2} \int_{B(o,R)} f^{\theta+1}\eta^2 + \left(\frac{a_2t}{\theta^2}\mathbb{S}_{\chi}(M)- \left\|\ric_-\right\|_{L^{\frac{\chi}{\chi-1}}}\right)\left\|f^{\theta} \eta^2\right\|_{L^{\chi}}\\ \nonumber
\leq &\gamma(\alpha,p,t)\int_{B(o,R)} f^{\theta}|\nabla\eta|^2,
\end{align}	
where
$$\gamma(\alpha,p,t)=\frac{2(p+1)^2 }{a_2t}+\frac{a_2t}{\theta^2}.$$

Now, choose  $\eta_1\in C^{\infty}_0(B_R(o))$ such that
$$
\begin{cases}
0\leq\eta_1\leq 1,\quad \eta_1\equiv 1\text{ in }  B_{{3R}/{4}}(o);\\
|\nabla\eta_1|\leq\frac{C(n)}{R},
\end{cases}
$$
and let $\eta = \eta_1^{\theta+1}$. Direct calculation shows that
\begin{align*}
R^2|\nabla\eta|^2\leq C^2(n)\left( \theta+1\right )^2\eta ^{\frac{2\theta}{\theta+1}}.
\end{align*}

By H\"older inequality and Young inequality, we have
\begin{align}\label{2.39}
\begin{split}
\gamma(\alpha,p,t)\int_{B(o,R)}f^{\theta}|\nabla\eta|^2\leq &\frac{C^2(n)\left(\theta+1\right )^2\gamma(\alpha,p,t)}{R^2} \int_{B(o,R)}f^{\theta}
\eta^{\frac{2\theta}{\theta+1}}\\
\leq &\frac{C^2(n)\left( \theta+1\right )^2\gamma(\alpha,p,t)}{R^2}  \left(\int_{B(o,R)}f^{\theta+1 }\eta^2\right)^{\frac{\theta}{\theta+1}}V^{\frac{ 1}{\theta+1}}\\
\leq &\frac{\beta_{n,p,q,\alpha }}{2}\left[\int_{B(o,R)}f^{ \theta+1}\eta^2 + \left(\frac{C^2(n)\left(\theta+1\right )^2\gamma(\alpha,p,t) }{\beta_{n,p,q,\alpha}R^2}\right)^{ \theta+1 }V\right].
\end{split}
\end{align}
Since $\eta\equiv1$ in $B_{3R/4}$, there holds
\begin{equation}\label{5.8}
\left\|f^{\theta}\right\|_{L^{\chi}(B_{3R/4}(o))}\leq \left\|f^{\theta}\eta^2\right\|_{L^{\chi}(B_R(o))}.
\end{equation}		
Substituting \eqref{2.39} and \eqref{5.8} into \eqref{924} and keeping in mind
$$\left(\frac{a_2t}{\theta^2}\mathbb{S}_{\chi}(M)- \left\|\ric_-\right\|_{L^{\frac{\chi}{\chi-1}}}\right)>0,$$
we arrive at
\begin{equation}
\left\|f^{\theta}\right\|_{L^{\chi}(B_{3R/4}(o))}\leq \frac{\beta_{n,p,q,\alpha}}{\beta(\alpha,p,t)}\left(\frac{C^2(n)\left( \theta+1\right )^2\gamma(\alpha,p,t) }{\beta_{n,p,q,\alpha}R^2}\right)^{ \theta+1 }V.\end{equation}	
Finally, we conclude that
$$\|f \|_{L^{\theta\chi}(B_{3R/4}(o))}\leq \frac{\left(C^2(n)\left( \theta+1\right )^2\gamma(\alpha,p,t)\right)^{\frac{\theta+1}{\theta}}}{\beta_{n,p,q,\alpha}\beta(\alpha,p,t)^{\frac{1}{\theta}}}\left(\frac{V}{R^{2(\theta+1)}}\right)^{\frac{1}{\theta}}.$$	\end{proof}

\begin{proof}[Proof of \thmref{main}:] Firstly, fix some $\alpha>0$ such that $\alpha$ meets the conditions in \lemref{lem31}. Then we can choose a large $t$ such that
$$2\left(\alpha+t+\frac{p}{2}\right)>\frac{2\chi}{\chi-1}\quad\mbox{as}\,\, \chi\in\left(1,\, \frac{n}{n-2}\right)$$
or 
$$2\left(\alpha+t+\frac{p}{2}\right)>\beta^*\geq n \quad\mbox{as}\,\, \chi=\frac{n}{n-2},$$
and the condition \eqref{tc} holds. Once these have been done, we see that there exists a positive constant $C(n,p,q,\beta^*)$ depending on $n,p,q$ and $\beta^*$ such that, if
$$\|\ric_-\|_{L^{\frac{\chi}{\chi-1}}} < C(n,p,q,\beta^*)\mathbb{S}_{\chi}(M),$$
then relation \eqref{lpbpund} holds. Now, by letting $R$ tends to infinity, we conclude that $f=0$, i.e., $\nabla u=0$. Thus, $v$ is a positive constant. This contradicts to $v$ is a solution. We complete the proof.
\end{proof}

\subsection{Local gradient estimate when $\ric_{-} \in L^{\gamma}$ for some $\gamma>\chi/(\chi-1)$.}
\begin{thm}\label{main1}
Let $(M,g)$ be a complete noncompact Riemannian manifold on which the {\it $\chi$-type Sobolev inequality} holds.  Denote $\Lambda=\|\ric_-\|_{L^{\gamma}(B_1)}$ for some $\gamma>\chi/(\chi-1)$. Then, for any $r\leq1$ when
$$a>0\quad  \&\quad  q<\frac{n+3}{n-1}(p-1)\quad \text{or} \quad a<0 \quad \& \quad  q>p-1,
$$
the following local gradient estimate holds for positive solution $v$ to \eqref{equ0},
$$\sup_{ B_{r/2}}\frac{|\nabla v|^2}{v^2}\leq a_5\left(\frac{V}{r^{2\left(\frac{\chi}{\chi-1}+\theta\right)}}\right)^{\frac{1}{\theta}},$$
where $V$ is the volume of $B_r$.
\end{thm}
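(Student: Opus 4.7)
The plan is to prove this pointwise bound by combining two ingredients: a version of Lemma~\ref{lpbound} that replaces the $L^{\chi/(\chi-1)}$ smallness hypothesis on $\ric_-$ by the weaker $L^{\gamma}$ integrability for $\gamma>\chi/(\chi-1)$, and a Nash--Moser iteration starting from the resulting $L^{\theta\chi}$ bound. The basic integral inequality \eqref{924} of Lemma~\ref{41} will be the starting point of every estimate, and the improvement over Lemma~\ref{lpbound} comes from handling the Ricci term by H\"older plus interpolation plus Young rather than by a direct H\"older absorption.

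First, since $\gamma>\chi/(\chi-1)$ the conjugate exponent $\gamma'=\gamma/(\gamma-1)$ lies strictly below $\chi$, so H\"older together with log-convex interpolation of $L^p$-norms between $L^1$ and $L^\chi$ gives
\begin{equation*}
\int\ric_-f^\theta\eta^2\le\Lambda\,\|f^\theta\eta^2\|_{L^1}^{1-\mu}\|f^\theta\eta^2\|_{L^\chi}^{\mu},\qquad \mu=\frac{1-1/\gamma'}{1-1/\chi}\in(0,1),
\end{equation*}
with $\mu$ depending only on $\gamma$ and $\chi$. Young's inequality then absorbs the $L^\chi$ factor into the Sobolev term on the left of \eqref{924}, at the cost of a term $C\,\Lambda^{1/(1-\mu)}\|f^\theta\eta^2\|_{L^1}$ on the right whose $\theta$-dependence is only polynomial. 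Repeating the cutoff-and-Young argument from the proof of Lemma~\ref{lpbound} (take $\eta=\eta_1^{\theta+1}$ with $\eta_1\equiv1$ on $B_{3r/4}$, bound $\int f^\theta\eta^{2\theta/(\theta+1)}$ by $(\int f^{\theta+1}\eta^2)^{\theta/(\theta+1)}V^{1/(\theta+1)}$, and absorb the resulting $\int f^{\theta+1}\eta^2$ piece back into the left-hand side) yields the no-smallness analogue of \eqref{lpbpund}: for every $r\le1$ and every admissible $\theta$,
\begin{equation*}
\|f\|_{L^{\theta\chi}(B_{3r/4})}\le C(\alpha,p,q,t,\gamma,\Lambda)\Bigl(V/r^{2(\theta+1)}\Bigr)^{1/\theta}.
\end{equation*}

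Second, a standard Nash--Moser iteration lifts this integral bound to a sup bound. Discarding the (positive) $\int f^{\theta+1}\eta^2$ term on the left of the absorbed inequality gives, for every sufficiently large exponent $s$,
\begin{equation*}
\|f^s\eta^2\|_{L^\chi}\le C(s,\Lambda)\|f^s\eta^2\|_{L^1}+Cs^2\!\int\! f^s|\nabla\eta|^2.
\end{equation*}
Iterate with $s_k=\theta\chi^{k+1}$, nested radii $r_k=r/2+r/2^{k+1}$, and the usual cutoffs $|\nabla\eta_k|\le 2^{k+2}/r$. Because $1-\mu$ is independent of $s_k$, the constant $C(s_k,\Lambda)$ is polynomial in $s_k$, and the resulting infinite product $\prod_k(Cs_k^2\cdot 4^k/r^2)^{1/s_k}$ converges because $\sum_k 1/s_k=\chi/((\chi-1)\theta\chi)<\infty$ and $\sum_k k/s_k<\infty$. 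This yields
\begin{equation*}
\sup_{B_{r/2}}f\le C(\theta,\Lambda,\gamma)\,r^{-2/((\chi-1)\theta)}\,\|f\|_{L^{\theta\chi}(B_{3r/4})}.
\end{equation*}
Plugging in the bound from the first step and combining the $r$-exponents, $-2/((\chi-1)\theta)-2(\theta+1)/\theta=-2\bigl(\chi/(\chi-1)+\theta\bigr)/\theta$, produces the claimed estimate, modulo the constant $(p-1)^{-2}$ converting $f=|\nabla u|^2$ to $|\nabla v|^2/v^2$.

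The main technical obstacle is controlling how the Moser constants depend on the iteration index: convergence of the infinite product hinges on the fact that $1-\mu$ is a fixed positive number depending only on $\gamma$ and $\chi$, not on $s_k$, which is precisely why we needed the \emph{strict} inequality $\gamma>\chi/(\chi-1)$ rather than equality. A secondary point is to choose the base exponent $\theta$ large enough that every iterated $s_k=\theta\chi^{k+1}$ lies in the admissibility range \eqref{tc} for Lemma~\ref{41}.
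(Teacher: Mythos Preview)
Your proposal is correct and follows essentially the same route as the paper: both handle the $\ric_-$ term in \eqref{924} by H\"older plus interpolation (the paper interpolates $\|f^{\theta/2}\eta\|$ between $L^2$ and $L^{2\chi}$ and uses an additive $\varepsilon$-form, you interpolate $\|f^\theta\eta^2\|$ between $L^1$ and $L^\chi$ multiplicatively and then apply Young---these are equivalent), absorb the $L^\chi$ piece into the Sobolev term to obtain the $L^{\theta\chi}$ bound without smallness, and then run a Nash--Moser iteration whose convergence rests on the polynomial growth of the constants in the exponent, which in turn rests on the strict inequality $\gamma>\chi/(\chi-1)$. Your bookkeeping of the $r$-exponent $-2/((\chi-1)\theta)-2(\theta+1)/\theta=-2(\chi/(\chi-1)+\theta)/\theta$ and your closing remark about admissibility of all iterated exponents match the paper's treatment.
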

\begin{proof}
Notice that, by \lemref{41}, we already have
\begin{equation}\label{1015}
\frac{\beta_{n,p,q,\alpha}}{2} \int_{B_r} f^{\theta+1}\eta^2 + \frac{a_2t}{\theta^2}\mathbb{S}_{\chi}(M) \left\|f^{\theta} \eta^2\right\|_{L^{\chi}}
\leq \int_{B_r} \ric_-f^{\theta}\eta^2+\gamma(\alpha,p,t)\int_{B_r} f^{\theta}|\nabla\eta|^2.
\end{equation}	
By Holder's inequality, we have
\begin{equation}
\int_\Omega \ric_-f^{\theta}\eta^2\leq \Lambda\|f^{\frac{\theta}{2}}\eta\|_{L^{\frac{2\gamma}{\gamma-1}}}^2.
\end{equation}
Notice that
$$\frac{2\gamma}{\gamma-1}\in(2,\, 2\chi).$$
By interpolation inequality, we have
$$\left\|f^{\frac{\theta}{2}}\eta\right\|_{L^{\frac{2\gamma}{\gamma-1}}}\leq \varepsilon\left\|f^{\frac{\theta}{2}}\eta\right\|_{L^{2\chi}}+\varepsilon^{-\frac{\chi}{(\chi-1)\gamma-\chi}}\left\|f^{\frac{\theta}{2}}\eta\right\|_{L^2}.$$
Hence
\begin{equation*}
\int_\Omega \ric_-f^{\theta}\eta^2\leq 2\Lambda\varepsilon^2\left\|f^{\frac{\theta}{2}}\eta\right\|_{L^{2\chi}}^2+2\Lambda\varepsilon^{-\frac{2\chi}{(\chi-1)\gamma-\chi}}\left\|f^{\frac{\theta}{2}}\eta\right\|_{L^2}^2.\end{equation*}
Without loss of generality, we assume $\Lambda>0$. Now, Let
$$\varepsilon=\varepsilon(\alpha,p,t)=\frac{1}{2\theta}\left(\frac{a_2t\mathbb{S}_{\chi}(M)}{\Lambda}\right)^{\frac{1}{2}}.$$
Substituting the above into \eqref{1015}, we arrive at
\begin{align}\label{1016}
&\frac{\beta_{n,p,q,\alpha}}{2} \int_{B_r} f^{\theta+1}\eta^2 + \frac{a_2t}{2\theta^2}\mathbb{S}_{\chi}(M) \left\|f^{\theta} \eta^2\right\|_{L^{\chi}}\\ \nonumber
\leq\,&\gamma(\alpha,p,t)\int_{B_r} f^{\theta}|\nabla\eta|^2+2\Lambda\varepsilon^{-\frac{2\chi}{(\chi-1)\gamma-\chi}}\int_{B_r}f^{\theta}\eta^2.
\end{align}
By almost the same arguments as the proof of \lemref{lpbound}, we conclude that  when $r\leq1$, there exists $a_3 = a_3(n,\alpha,p,q,t,\Lambda)>0$ such that
\begin{align}\label{lpbound1}
\|f \|_{L^{\theta\chi}(B_{3r/4}(o))}\leq a_3\left(\frac{V}{r^{2(\theta+1)}}\right)^{\frac{1}{\theta}},
\end{align}
where $V$ denotes the volume of geodesic ball $B_r(o)$.

Now, we fix $\alpha=\alpha_0$ such that $\alpha_0$ satisfies the conditions in \lemref{lem31}. Once $\alpha$ has been fixed, $\theta$ can be regarded as a function with respect to $t$. That is to say, when $\alpha=\alpha_0$ is fixed,
$$\theta=\theta(t)=\alpha_0+t+\frac{p}{2}-1.$$
On the other hand,  when $r\leq1$, the test function $\eta$ constructed below satisfies
$$1=\|\eta\|_{L^{\infty}}\leq\|\nabla \eta\|_{L^{\infty}}.$$
Hence, \eqref{1016} can be rewritten as
\begin{equation}\label{1016*}
\frac{\beta_{n,p,q,\alpha}}{2} \int_\Omega f^{\theta+1}\eta^2 + \frac{a_2t}{2\theta^2}\mathbb{S}_{\chi}(M) \left\|f^{\theta} \eta^2\right\|_{L^{\chi}}
\leq\left(\gamma(\alpha,p,t)+2\Lambda\varepsilon^{-\frac{2\chi}{(\chi-1)\gamma-\chi}}\right)|\nabla\eta|_{L^{\infty}}^2\int_\Omega f^{\theta}.
\end{equation}

Now, denote by
$$\zeta(t)=\frac{2\theta^2\left(\gamma(\alpha_0,p,t)+2\Lambda\varepsilon^{-\frac{2\chi}{(\chi-1)\gamma-\chi}}\right)}{a_2t\mathcal{S}_\chi(M)}.$$
It follows from \eqref{1016*} that
\begin{equation}\label{931}
\left\|f^{\theta}\eta^2\right\|_{L^{\chi}}\leq \zeta(t)|\nabla\eta|_{L^{\infty}}^2\int_\Omega f^{\theta}.
\end{equation}
Let
$$\Omega_k = B_{r_k}(o) \quad \text{with} \quad r_k = \frac{r}{2}+\frac{r}{4^k},$$
 and choose $\eta_k\in C^{\infty}(\Omega_k )$ satisfying
\begin{align*}
\begin{cases}
0\leq \eta_k\leq1, \quad \eta_k\equiv 1\text{ in }B_{r_{k+1}}(o);\\
|\nabla\eta_k|\leq \frac{C4^k}{r}.
\end{cases}
\end{align*}

Now, for any $t_0$ satisfying the condition in \lemref{41}, we denote
$$\theta_0=\alpha_0+t_0+\frac{p}{2}-1.$$
Moreover, we let $\beta_k=\theta_0\chi^{k}$ and $t=t_k$ such that
$$t_k+\frac{p}{2}+\alpha_0-1=\beta_k.$$
By substituting $\theta=\theta_k$ and $ \eta$ by $\eta_k$ in \eqref{931}, we arrive at
$$\|f\|_{L^{\beta_{k+1}}(\Omega_{k+1})}\leq \zeta(t_k)^{\frac{1}{\beta_k}} 16 ^{\frac{k}{\beta_k}}r^{-\frac{2}{\beta_k}}\|f\|_{L^{\beta_k}(\Omega_k)}.$$
Hence,
\begin{equation}\label{1929}
\|f\|_{L^{\beta_{k+1}}(\Omega_{k+1})}
\leq
\prod_{i=1}^k\zeta(t_i)^{\frac{1}{\beta_i}} 16 ^{\sum_{i=1}^k\frac{i}{\beta_i}}r^{-\sum_{i=1}^k\frac{2}{\beta_i}}\|f\|_{L^{\beta_1}(\Omega_k)}.
\end{equation}
Notice that
$$\zeta(t)\leq c(p,\alpha_0,\Lambda, \mathbb{S}_{\chi}(M))t^{\frac{(\chi-1)\gamma}{(\chi-1)\gamma-\chi}}.$$
Straightforward calculation shows that
$$
\prod_{i=1}^\infty\zeta(t_i)^{\frac{1}{\beta_i}}<\infty,\quad\quad\sum_{k=1}^{\infty}\frac{1}{\beta_k}  =\frac{1}{\theta_0(\chi-1)},\quad\quad \sum_{k=1}^{\infty}\frac{k}{\beta_k} <\infty.
$$
By letting $k\rightarrow\infty$ in \eqref{1929}, we arrive at
\begin{align*}
 \|f\|_{L^{\infty}(B_{r/2}(o))}
\leq &
a_{4}r^{-\frac{2}{\theta_0(\chi-1)}} \|f\|_{L^{\beta}(B_{3r/4}(o))}.
\end{align*}
By substituting \eqref{lpbound1} into the above, we finally arrive at
\begin{equation*}\label{104}
\|f\|_{L^{\infty}(B_{r/2}(o))}
\leq
a_5(\alpha_0,t_0, p, q, \Lambda, \mathbb{S}_\chi(M))\left(\frac{V}{r^{2\left(\frac{\chi}{\chi-1}+\theta_0\right)}}\right)^{\frac{1}{\theta_0}}.
\end{equation*}
Thus, we complete the proof.
\end{proof}

\begin{proof}[Proof of \thmref{main3}:]
Since $\chi\leq n/(n-2)$, it is easy to see that $\chi/(\chi-1)\geq n/2$. Hence $\gamma>n/2$.
By the relative volume comparison theorem under the integral bounded Ricci curvature due to Peterson and Wei \cite{PW}, there holds
\begin{equation}
\vol(B_r)\leq \omega_nr^n+C(n,\gamma)\|\ric_-\|^{\gamma}_{L^\gamma} r^{2\gamma}.
\end{equation}
Hence, by substituting the above into \thmref{main1}, then letting $r=1$, we conclude the conclusion.
\end{proof}

\section{Laplace Case: Proof of Theorem \ref{theorem0}}
In the previous section (see \thmref{main}), we have shown that the conclusion of Theorem \ref{theorem0} holds for $q<\frac{n+3}{n-1}$.  In this section, we shall prove the remaining case of
$$q\in\left[\frac{n+3}{n-1},~\frac{n+2}{(n-2)_+}\right).$$

Throughout this section, we assume $v$ be a positive solution of \eqref{se} on $B(o,R)\subset M$. First, we recall some auxiliary functions and point-wise estimates developed by Lu in \cite{Lu}.  As categorized in \cite{Lu}, we need to consider dimensions greater than or equal to $4$ and less than $4$ respectively.

\subsection{\textbf{Case 1: $n\ge4 \quad\&\quad q\in\left[\frac{n+3}{n-1},~\frac{n+2}{n-2}\right)$\ }}\

\subsubsection{\textbf{First auxiliary function $F$ and  estimation of the leading coefficients. }}\

 For $\theta\neq0$, let $\omega=v^{-\theta}$.
A straightword calculation shows that
\begin{align}\label{2.2*}
    \Delta\omega=\left(1+\frac{1}{\theta}\right)\frac{|\nabla\omega|^2}{\omega}+\theta\omega v^{q-1}.
\end{align}
For undetermined real numbers $\varepsilon>0$ and $d>0$, define the first type auxiliary function:
\begin{align}\label{2.3*}
    F=(v+\varepsilon)^{-\theta}\left(\frac{|\nabla\omega|^2}{\omega^2}+dv^{q-1}\right).
\end{align}
\begin{lem}\label{lemma2.1*}
$(\mathrm{Lemma\,2.1}$ and Lemma 6.1 in \cite{Lu}$)$ There holds:
\begin{align}\label{2.4*}
\begin{split}
(v+\varepsilon)^\theta\Delta F=\,&2\omega^{-2}\left|\nabla^2\omega-\frac{\Delta \omega}{n}g\right|^2+2\omega^{-2}\mathrm{Ric}(\nabla\omega,\nabla\omega)\\
&+2\left(\frac{1}{\theta}-\frac{\varepsilon}{v+\varepsilon}\right)(v+\varepsilon)^\theta\langle\nabla F,\nabla\ln\omega\rangle\\
&+U\frac{|\nabla\omega|^4}{\omega^4}+V\frac{|\nabla\omega|^2}{\omega^2}v^{q-1}+Wv^{2(q-1)},
\end{split}
\end{align}
where
\begin{align*}
U=&\frac{2}{n}\left(1+\frac{1}{\theta}\right)^2+\left(\frac{1}{\theta}-1\right)\frac{v^2}{(v+\varepsilon)^2}+2\left(1-\frac{1}{\theta}\right)\frac{v}{v+\varepsilon}-2,\\
V=&\frac{4}{n}(1+\theta)+2(1-q)+\frac{d(q-1)}{\theta^2}(q-2\theta)+\frac{v}{v+\varepsilon}\left\{\theta-d\left(\frac{1}{\theta}-1\right)\left(1+\frac{\varepsilon}{v+\varepsilon}\right)\right\},\\
W=&\frac{2\theta^2}{n}+d\left(\frac{\theta v}{v+\varepsilon}+1-q\right).
\end{align*}
Moreover, for  $n\ge4$ and $q\in\left[\frac{n+3}{n-1},~\frac{n+2}{n-2}\right)$, there exist $\theta=\theta(n,q)\in\left(0,\frac{2}{n-2}\right)$, $d=d(n,q)>0$, $L=L(n,q)>0$ and $\widetilde{M}=\widetilde{M}(n,q)>0$ such that for any $\varepsilon>0$,
\begin{align*}
U&\ge U_0>0\\
V&\ge V_0-\widetilde{M}\bar\chi_{\left\{x\in B(o,R):\,v(x)<L\varepsilon\right\}},\\
W&\ge W_0-\widetilde{M}\bar\chi_{\left\{x\in B(o,R):\,v(x)<L\varepsilon\right\}},
\end{align*}
where $U_0$, $V_0$, $W_0$ are positive constants depending only on $n$ and $q$, and $\bar\chi$ denotes the characteristic function.
\end{lem}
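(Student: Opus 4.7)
The strategy is to derive the identity \eqref{2.4*} by a Bochner-type computation and then verify the positivity of the leading coefficients $U_0, V_0, W_0$ by a careful two-parameter tuning in $(\theta, d)$. The first step is to note that the substitution $\omega = v^{-\theta}$ combined with $\Delta v + v^q = 0$ yields the auxiliary PDE \eqref{2.2*} after a direct differentiation. Applying the Bochner formula $\frac{1}{2}\Delta |\nabla \omega|^2 = |\nabla^2 \omega|^2 + \langle \nabla \omega, \nabla \Delta \omega\rangle + \mathrm{Ric}(\nabla \omega, \nabla \omega)$ and substituting \eqref{2.2*} for $\Delta \omega$ produces $\Delta(|\nabla \omega|^2/\omega^2)$ in terms of $|\nabla^2 \omega|^2$, Ricci, and lower-order gradient terms. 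The traceless decomposition $|\nabla^2 \omega|^2 = |\nabla^2 \omega - (\Delta \omega/n)g|^2 + (\Delta \omega)^2/n$ isolates the nonnegative Hessian piece and feeds $(\Delta \omega)^2/n$, which is quadratic in $|\nabla \omega|^2/\omega$ and $v^{q-1}$, into the future coefficients $U, V, W$. Multiplying by $(v+\varepsilon)^{-\theta}$ and regrouping cross terms via the prefactor $\frac{1}{\theta} - \frac{\varepsilon}{v+\varepsilon}$ absorbs everything containing $\nabla F$ into the drift term, producing \eqref{2.4*}.

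For the coefficient analysis, I would first examine $U, V, W$ at $s := v/(v+\varepsilon) = 1$, the regime $v \gg \varepsilon$. Setting $\tau := 1/\theta$, the value $U|_{s=1}$ simplifies to $\frac{2}{n}(1+\tau)^2 - (1+\tau)$, strictly positive iff $\tau > (n-2)/2$, i.e.\ $\theta \in (0, 2/(n-2))$, which determines the admissible window for $\theta$. The value $W|_{s=1}$ equals $\frac{2\theta^2}{n} + d(\theta + 1 - q)$, and since $q \geq \frac{n+3}{n-1} > 1 + \theta$ for every $\theta$ in the admissible window (the elementary check $\frac{n+3}{n-1} > \frac{n}{n-2}$ holds for $n \geq 4$), positivity of $W_0$ constrains $d$ from above. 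The value $V|_{s=1}$ is a polynomial in $d$ of degree one whose leading coefficient involves $q - 2\theta$; its positivity produces matching bounds on $d$. The crux is to exhibit a nonempty open region of $(\theta, d)$ pairs making $U_0, V_0, W_0$ all positive, and to verify that this region persists exactly throughout the range $q \in [\frac{n+3}{n-1}, \frac{n+2}{n-2})$.

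Once the $(\theta, d)$-window is fixed, I would analyze each coefficient as a function of $s \in [0, 1]$. For $U$, completing the square in $s$ shows that the minimum over $s \in [0,1]$ is attained at $s = 1$ provided $\tau > 1$, which is automatic since $\theta < 2/(n-2) \leq 1$ for $n \geq 4$; hence $U \geq U_0$ uniformly in $\varepsilon$ and $v$, with no characteristic-function correction. For $V$ and $W$ the minimum may occur away from $s = 1$, so I would split the domain: on $\{v \geq L\varepsilon\}$ with $L = L(n,q)$ large, one has $\varepsilon/(v+\varepsilon) \leq 1/(L+1)$, and continuity of $V, W$ in $s$ yields $V, W \geq V_0, W_0 > 0$ after shrinking $V_0, W_0$ slightly; on the complement $\{v < L\varepsilon\}$ each coefficient is a rational function of a bounded quantity and is therefore uniformly bounded below by some $-\widetilde{M}(n,q)$, which produces the stated characteristic-function form.

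The main obstacle is the joint positivity of $U_0, V_0, W_0$: this reduces to a two-parameter feasibility problem in $(\theta, d)$ whose solvability is a delicate function of $q$ and $n$. The upper threshold $q < \frac{n+2}{n-2}$ is the borderline at which $V_0$ as a polynomial in $d$ ceases to have a positive branch, while the lower threshold $q \geq \frac{n+3}{n-1}$ ensures that the Hessian absorption $(\Delta \omega)^2/n$ supplies enough quartic term to dominate the indefinite $\langle \nabla \omega, \nabla v\rangle$-type contributions. Rather than rediscovering the calibration, I would follow the explicit parameter choices in \cite{Lu}, verifying in each of their three subregimes of $q$ that the resulting $(\theta, d)$ lies in the intersection of the three positivity regions.
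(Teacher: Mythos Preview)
The paper does not supply its own proof of this lemma; it is quoted verbatim from \cite{Lu} (Lemmas~2.1 and~6.1 there), so there is no in-paper argument to compare against. Your outline is consistent with the expected derivation: Bochner formula applied to $|\nabla\omega|^2/\omega^2$, traceless Hessian splitting to extract $(\Delta\omega)^2/n$, regrouping of cross terms into the drift $\langle\nabla F,\nabla\ln\omega\rangle$, and then a two-parameter feasibility analysis in $(\theta,d)$ at $s=v/(v+\varepsilon)=1$ followed by a continuity/cutoff argument for general $s$. Your check that $U(s)=U_0+(\tau-1)(s-1)^2$ with $\tau=1/\theta>1$, and your verification that $\frac{n+3}{n-1}>\frac{n}{n-2}$ for $n\ge4$, are both correct and are exactly the kind of pointwise observations needed.

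The only place your sketch is genuinely thin is the joint solvability of $U_0,V_0,W_0>0$ in $(\theta,d)$ across the full range $q\in[\frac{n+3}{n-1},\frac{n+2}{n-2})$. You correctly identify this as the crux and propose to import the explicit parameter choices from \cite{Lu}; that is acceptable here since the paper itself does the same. If you wanted to make the proposal self-contained, you would need to exhibit, for each $q$ in the range, an explicit $\theta(n,q)\in(0,\frac{2}{n-2})$ and $d(n,q)>0$ and verify the three inequalities directly---this is elementary but requires case splitting in $q$ (as \cite{Lu} does), not merely the qualitative remarks about which threshold controls which coefficient.
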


Next, we shall provide a precise estimate for $\Delta F$.
\begin{lem}\label{lemma2.3*}
Let $n\ge4$ and $q\in\left[\frac{n+3}{n-1},~\frac{n+2}{n-2}\right)$. Then there exist $\theta=\theta(n,q)\in\left(0,\frac{2}{n-2}\right)$, $d=d(n,q)>0$, $C_0=C_0(n,q)>0$ and $M_0=M_0(n,q)>0$,  such that the following holds point-wisely in $B(o,R)$:
\begin{align}\label{2.5*}
\Delta F\ge-2\mathrm{Ric}_-F-\frac{2}{\theta}|\nabla F||\nabla\ln\omega|+C_0(v+\varepsilon)^\theta F^2-M_0\varepsilon^{q-1}F.
\end{align}
\end{lem}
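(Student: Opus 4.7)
The plan is to turn the identity of \lemref{lemma2.1*} into the pointwise estimate by a term-by-term sign analysis, and then handle the exceptional set $\{v<L\varepsilon\}$ separately. Throughout, write $a=|\nabla\omega|^2/\omega^2$ and $b=v^{q-1}$, so that by \eqref{2.3*}
\[
(v+\varepsilon)^{\theta}F=a+db,\qquad (v+\varepsilon)^{2\theta}F^{2}=(a+db)^{2}.
\]
First I would discard the non-negative traceless-Hessian squared term $2\omega^{-2}|\nabla^{2}\omega-\Delta\omega/n\cdot g|^{2}$, and replace the Ricci term by $2\omega^{-2}\ric(\nabla\omega,\nabla\omega)\geq -2\ric_{-}\,a\geq -2\ric_{-}(v+\varepsilon)^{\theta}F$, which after dividing by $(v+\varepsilon)^{\theta}$ produces exactly the $-2\ric_{-}F$ term of \eqref{2.5*}.

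Next, for the drift term, I would observe that since $\theta\in(0,2/(n-2))\subset(0,1)$ for $n\geq 4$, one has $0<\tfrac1\theta-\tfrac{\varepsilon}{v+\varepsilon}\leq\tfrac1\theta$, so
\[
\Bigl|\,2\bigl(\tfrac{1}{\theta}-\tfrac{\varepsilon}{v+\varepsilon}\bigr)(v+\varepsilon)^{\theta}\langle\nabla F,\nabla\ln\omega\rangle\Bigr|\leq \tfrac{2}{\theta}(v+\varepsilon)^{\theta}|\nabla F||\nabla\ln\omega|,
\]
which gives precisely the term $-\tfrac{2}{\theta}|\nabla F||\nabla\ln\omega|$ after division by $(v+\varepsilon)^{\theta}$. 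The ``hard core'' of the proof is the coercivity of the quadratic form $Ua^{2}+Vab+Wb^{2}$ in $(a,b)$: I would use the bounds of \lemref{lemma2.1*} to write $U\geq U_{0}$, $V\geq V_{0}-\widetilde M\bar\chi$, $W\geq W_{0}-\widetilde M\bar\chi$, where $\bar\chi=\bar\chi_{\{v<L\varepsilon\}}$, and then argue that the explicit form of $U_{0},V_{0},W_{0}$ chosen in Lu \cite{Lu} satisfies the strict discriminant condition $V_{0}^{2}<4U_{0}W_{0}$. This is the algebraic obstacle that really drives the allowed range $q\in[(n+3)/(n-1),(n+2)/(n-2))$, and it is essentially a finite computation (of the same kind used to fix $\theta,d$ in the statement of \lemref{lemma2.1*}). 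Given such an inequality, for a sufficiently small $C_{0}=C_{0}(n,q)>0$ one obtains
\[
U_{0}a^{2}+V_{0}ab+W_{0}b^{2}\geq C_{0}(a+db)^{2},
\]
so that outside $\{v<L\varepsilon\}$ the full quadratic form dominates $C_{0}(v+\varepsilon)^{2\theta}F^{2}$, which after division by $(v+\varepsilon)^{\theta}$ yields the desired $C_{0}(v+\varepsilon)^{\theta}F^{2}$ term.

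Finally, I would absorb the exceptional contribution on $\{v<L\varepsilon\}$. There $v^{q-1}\leq (L\varepsilon)^{q-1}$, $(v+\varepsilon)^{\theta}\leq ((L+1)\varepsilon)^{\theta}$, and $a\leq (v+\varepsilon)^{\theta}F\leq ((L+1)\varepsilon)^{\theta}F$, so the error coming from the two $\widetilde M\bar\chi$ terms is controlled by
\[
\widetilde M\bigl(a\,v^{q-1}+v^{2(q-1)}\bigr)\leq C\,\varepsilon^{q-1+\theta}F\quad\text{on }\{v<L\varepsilon\},
\]
where in the $v^{2(q-1)}$ term one uses $b=v^{q-1}\leq(v+\varepsilon)^{\theta}F/d$ together with the pointwise bound on $v^{q-1}$. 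Dividing through by $(v+\varepsilon)^{\theta}\geq\varepsilon^{\theta}$ converts this into $C\varepsilon^{q-1}F=M_{0}\varepsilon^{q-1}F$, producing the last term in \eqref{2.5*}. The only genuine difficulty is the strict quadratic-form discriminant inequality $V_{0}^{2}<4U_{0}W_{0}$; once that is in hand, every other step is a straightforward sign check or Cauchy--Schwarz bound.
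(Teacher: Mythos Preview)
Your outline is correct and follows the paper's own proof almost step for step: drop the traceless Hessian, bound the Ricci term, control the drift coefficient by $1/\theta$, get coercivity of the quadratic form in $(a,b)$, and absorb the $\{v<L\varepsilon\}$ error into an $\varepsilon^{q-1}F$ term.

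The one point worth flagging is your handling of the quadratic form. You call the discriminant inequality $V_{0}^{2}<4U_{0}W_{0}$ ``the only genuine difficulty'' and propose to verify it from Lu's explicit choices. This is unnecessary: the variables $a=|\nabla\omega|^{2}/\omega^{2}$ and $b=v^{q-1}$ are both \emph{nonnegative}, and \lemref{lemma2.1*} already gives $U_{0},V_{0},W_{0}>0$. For nonnegative $a,b$ one has directly
\[
U_{0}a^{2}+V_{0}ab+W_{0}b^{2}\ \ge\ \min\Bigl\{U_{0},\tfrac{V_{0}}{2},W_{0}\Bigr\}(a+b)^{2}\ \ge\ C_{0}(a+db)^{2}=C_{0}(v+\varepsilon)^{2\theta}F^{2},
\]
with no discriminant condition required. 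This is precisely what the paper does. The hard algebra driving the range $q\in[(n+3)/(n-1),(n+2)/(n-2))$ is entirely contained in \lemref{lemma2.1*} (i.e.\ in the positivity of $U_{0},V_{0},W_{0}$), not in any further discriminant check here. Your detour is harmless if the discriminant inequality happens to hold, but it turns a one-line step into an unverified obligation.
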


\begin{proof}
By Lemma \ref{lemma2.1*}, we know that there exist constants
$$\theta=\theta(n,q)\in\left(0,\frac{2}{n-2}\right),\quad d=d(n,q)>0,\quad L=L(n,q)>0\quad \mbox{and}\quad \widetilde{M}=\widetilde{M}(n,q)>0$$
such that for any $\varepsilon>0$,
\begin{align}\label{2.6*}
\begin{split}
(v+\varepsilon)^\theta\Delta F\ge\,&2\omega^{-2}\mathrm{Ric}(\nabla\omega,\nabla\omega) +2\left(\frac{1}{\theta}-\frac{\varepsilon}{v+\varepsilon}\right)(v+\varepsilon)^\theta\langle\nabla F,\nabla\ln\omega\rangle\\
&+U_0\frac{|\nabla\omega|^4}{\omega^4}+V_0\frac{|\nabla\omega|^2}{\omega^2}v^{q-1}+W_0v^{2(q-1)}\\
&-\widetilde{M}v^{q-1}\left(\frac{|\nabla\omega|^2}{\omega^2}+v^{q-1}\right)\bar{\chi}_{\left\{x\in B(o,R): v(x)<L\varepsilon\right\}},
\end{split}
\end{align}
where $U_0$, $V_0$, $W_0$ are positive constants that depend only on $n$ and $q$.  Notice that
\begin{align}\label{2.7*}
2\left(\frac{1}{\theta}-1+\frac{v}{v+\varepsilon}\right)(v+\varepsilon)^\theta\langle\nabla F,\nabla\ln\omega\rangle \ge-\frac{2}{\theta}(v+\varepsilon)^\theta|\nabla F||\nabla\ln\omega|,
\end{align}

\begin{align}\label{2.8*}
\begin{split}
U_0\frac{|\nabla\omega|^4}{\omega^4}+V_0\frac{|\nabla\omega|^2}{\omega^2}v^{q-1}+ W_0v^{2(q-1)}\ge&\min\left\{U_0,\frac{V_0}{2},
W_0\right\}\left(\frac{|\nabla\omega|^2}{\omega^2}+v^{q-1}\right)^2\\
\ge&C_0(n,q)\left(\frac{|\nabla\omega|^2}{\omega^2}+dv^{q-1}\right)^2\\
=&C_0(n,q)(v+\varepsilon)^{2\theta}F^2,
\end{split}
\end{align}
and
\begin{align}\label{2.9*}
\begin{split}
-\widetilde{M}v^{q-1}\left(\frac{|\nabla\omega|^2}{\omega^2}+v^{q-1}\right)\chi_{\left\{x\in B(o,R):v(x)<L\varepsilon\right\}}\ge &-\widetilde{M}(L\varepsilon)^{q-1}\left(\frac{|\nabla\omega|^2}{\omega^2}+v^{q-1}\right)\\
\ge&-M_0(n,q)\varepsilon^{q-1}\left(\frac{|\nabla\omega|^2}{\omega^2}+dv^{q-1}\right)\\
=&-M_0(n,q)\varepsilon^{q-1}(v+\varepsilon)^\theta F,
\end{split}
\end{align}
where $C_0(n,q)$ and $M_0(n,q)$ are positive numbers and depend only on $n$ and $q$.

Now, substituting \eqref{2.7*}, \eqref{2.8*} and \eqref{2.9*} into \eqref{2.6*} and dividing the both sides by $(v+\varepsilon)^\theta$ yields
\begin{align}\label{2.10*}
\Delta F\ge \,&2(v+\varepsilon)^{-\theta}\omega^{-2}\mathrm{Ric}(\nabla\omega,\nabla\omega)-2\left(\frac{1}{\theta}+1\right)|\nabla F||\nabla\ln\omega| \nonumber\\
&+C_0(v+\varepsilon)^\theta F^2-M_0\varepsilon^{q-1}F.
\end{align}
On the other hand, from \eqref{2.3*} we obtain
\begin{align*}
2(v+\varepsilon)^{-\theta}\omega^{-2}\mathrm{Ric}(\nabla\omega,\nabla\omega)\ge&-2(v+\varepsilon)^{-\theta}\mathrm{Ric_-}\frac{|\nabla\omega|^2}{\omega^2}\ge-2\mathrm{Ric_-}F.
\end{align*}
Substituting the above inequality into \eqref{2.10*}, we finish the proof of Lemma \ref{lemma2.3*}.
\end{proof}

\subsubsection{\textbf{Integral estimate}}\

Now, we are going to establish a key integral inequality of $F$.
\begin{lem}\label{lemma2.4*}
Let $(M,g)$ be a complete manifold on which the $\chi$-type Sobolev inequality holds. Let $n\ge4$, $q\in\left[\frac{n+3}{n-1},~\frac{n+2}{n-2}\right)$ and $\Omega=B(o,R)$. Define $\theta$, $d$, $C_0$ and $M_0$ as in Lemma \ref{lemma2.3*}. Then, for
\begin{align}\label{2.11*}
t\in\left(\max\left\{\frac{8}{C_0\theta^2},1\right\},~+\infty\right),
\end{align}
the following holds
\begin{align}\label{2.12*}
\begin{split}
&C_0\varepsilon^\theta\int_\Omega F^{t+2}\eta^2+\frac{\mathbb{S}_\chi(M)}{2t}\left\|F^{t+1}\eta^2\right\|_{L^\chi(\Omega)}\\
\le&4\int_\Omega\mathrm{Ric}_-F^{t+1}\eta^2+2M_0\varepsilon^{q-1}\int_\Omega F^{t+1}\eta^2+\frac{12}{t}\int_\Omega F^{t+1}|\nabla\eta|^2,
\end{split}
\end{align}
where $\eta\ge0$ and $\eta\in C_0^\infty(\Omega)$.
\end{lem}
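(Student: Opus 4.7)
The plan is to test the pointwise inequality \eqref{2.5*} of Lemma \ref{lemma2.3*} against $F^t\eta^2$, integrate, and then close the gradient estimate with the $\chi$-type Sobolev inequality, following the template of \lemref{41} (the $p$-Laplace case). First I would fix a nonnegative $\eta\in C_0^\infty(\Omega)$, multiply \eqref{2.5*} by $F^t\eta^2$, integrate over $\Omega$, and use integration by parts on the Laplacian term:
\begin{equation*}
\int_\Omega F^t\eta^2\,\Delta F=-t\int_\Omega F^{t-1}|\nabla F|^2\eta^2-2\int_\Omega F^t\eta\langle\nabla F,\nabla\eta\rangle.
\end{equation*}
This yields
\begin{equation*}
C_0\int_\Omega (v+\varepsilon)^\theta F^{t+2}\eta^2+t\int_\Omega F^{t-1}|\nabla F|^2\eta^2\le 2\int_\Omega\mathrm{Ric}_-F^{t+1}\eta^2+\frac{2}{\theta}\int_\Omega F^t|\nabla F||\nabla\ln\omega|\eta^2+M_0\varepsilon^{q-1}\int_\Omega F^{t+1}\eta^2+2\int_\Omega F^t\eta|\nabla F||\nabla\eta|.
\end{equation*}

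Next I would absorb the two gradient cross terms using Young's inequality, exploiting the crucial structural fact $|\nabla\ln\omega|^2\le(v+\varepsilon)^\theta F$ that is immediate from the definition \eqref{2.3*} of $F$. Specifically,
\begin{equation*}
\frac{2}{\theta}\int_\Omega F^t|\nabla F||\nabla\ln\omega|\eta^2\le\frac{t}{4}\int_\Omega F^{t-1}|\nabla F|^2\eta^2+\frac{4}{\theta^2 t}\int_\Omega(v+\varepsilon)^\theta F^{t+2}\eta^2,
\end{equation*}
and, since the hypothesis \eqref{2.11*} gives $4/(\theta^2 t)<C_0/2$, the $(v+\varepsilon)^\theta F^{t+2}\eta^2$ contribution is absorbed into half of the corresponding LHS term. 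The second cross term is handled by $2\int F^t\eta|\nabla F||\nabla\eta|\le\tfrac{t}{4}\int F^{t-1}|\nabla F|^2\eta^2+\tfrac{4}{t}\int F^{t+1}|\nabla\eta|^2$. Using $(v+\varepsilon)^\theta\ge\varepsilon^\theta$ (valid since $\theta>0$ and $v\ge0$) and multiplying the resulting inequality through by $2$, one arrives at
\begin{equation*}
C_0\varepsilon^\theta\int_\Omega F^{t+2}\eta^2+t\int_\Omega F^{t-1}|\nabla F|^2\eta^2\le 4\int_\Omega\mathrm{Ric}_-F^{t+1}\eta^2+2M_0\varepsilon^{q-1}\int_\Omega F^{t+1}\eta^2+\frac{8}{t}\int_\Omega F^{t+1}|\nabla\eta|^2.
\end{equation*}

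Finally I would apply the $\chi$-type Sobolev inequality to $F^{(t+1)/2}\eta$. The elementary bound
\begin{equation*}
\int_\Omega\bigl|\nabla(F^{(t+1)/2}\eta)\bigr|^2\le\frac{(t+1)^2}{2}\int_\Omega F^{t-1}|\nabla F|^2\eta^2+2\int_\Omega F^{t+1}|\nabla\eta|^2
\end{equation*}
combined with $t\ge 1$ (which yields $(t+1)^2\le 4t^2$, hence $2t/(t+1)^2\ge 1/(2t)$ and $4t/(t+1)^2\le 4/t$) converts the $F^{t-1}|\nabla F|^2\eta^2$ integral into $\tfrac{\mathbb{S}_\chi(M)}{2t}\|F^{t+1}\eta^2\|_{L^\chi}$ at the cost of an extra $\tfrac{4}{t}\int F^{t+1}|\nabla\eta|^2$, giving the advertised $\tfrac{12}{t}=\tfrac{8}{t}+\tfrac{4}{t}$ on the right-hand side. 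The only delicate point is constant bookkeeping: the threshold $t>8/(C_0\theta^2)$ must be used precisely to absorb the $(v+\varepsilon)^\theta F^{t+2}\eta^2$ contribution coming from $|\nabla\ln\omega|^2$ without spoiling the leading coefficient, and the condition $t\ge 1$ is deployed only at the Sobolev step to convert $2t/(t+1)^2$ into the clean $1/(2t)$ in the statement.
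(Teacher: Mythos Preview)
Your proposal is correct and follows essentially the same approach as the paper's proof: multiply \eqref{2.5*} by $F^t\eta^2$, integrate by parts, absorb the two cross terms via Young's inequality using $|\nabla\ln\omega|^2\le(v+\varepsilon)^\theta F$ and the threshold $t>8/(C_0\theta^2)$, then apply the $\chi$-type Sobolev inequality to $F^{(t+1)/2}\eta$ and simplify with $t\ge1$. The only cosmetic difference is that the paper keeps the factor $(v+\varepsilon)^\theta$ through the Sobolev step and replaces it by $\varepsilon^\theta$ at the very end, whereas you do this replacement one step earlier; the constants and the final inequality are identical.
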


\begin{proof}
Let $\eta\in C_0^\infty(\Omega)$ be a nonnegative function. By multiplying $F^t\eta^2$ on the both sides of \eqref{2.5*} ($t>1$ will be determined later) and integration by parts, we arrive at
\begin{align}\label{2.13*}
\begin{split}
&2\int_\Omega\mathrm{Ric}_-F^{t+1}\eta^2+\frac{2}{\theta}\int_\Omega F^t|\nabla F||\nabla\ln\omega|\eta^2+M_0\varepsilon^{q-1}\int_\Omega F^{t+1}\eta^2-2\int_\Omega F^t\langle\nabla F,\nabla\eta\rangle\eta\\
\ge&t\int_\Omega F^{t-1}|\nabla F|^2\eta^2+C_0\int_\Omega(v+\varepsilon)^\theta F^{t+2}\eta^2.
\end{split}
\end{align}
Notice that
\begin{align*}
\frac{2}{\theta}\int_\Omega F^t|\nabla F||\nabla\ln\omega|\eta^2\le&\frac{t}{4}\int_\Omega F^{t-1}|\nabla F|^2\eta^2+\frac{4}{t\theta^2}\int_\Omega F^{t+1}|\nabla\ln\omega|^2\eta^2\\
\le&\frac{t}{4}\int_\Omega F^{t-1}|\nabla F|^2\eta^2+\frac{4}{t\theta^2}\int_\Omega(v+\varepsilon)^\theta  F^{t+2}\eta^2
\end{align*}
and
\begin{align*}
-2\int_\Omega F^t\langle\nabla F,\nabla\eta\rangle\eta\le\frac{t}{4}\int_\Omega F^{t-1}|\nabla F|^2\eta^2+\frac{4}{t}\int_\Omega F^{t+1}|\nabla\eta|^2.
\end{align*}
Substituting the above two inequalities into \eqref{2.13*}, we conclude
\begin{align*}
&\frac{t}{2}\int_\Omega F^{t-1}|\nabla F|^2\eta^2+\left(C_0-\frac{4}{t\theta^2}\right)\int_\Omega(v+\varepsilon)^\theta  F^{t+2}\eta^2\\
\le&2\int_\Omega\mathrm{Ric}_-F^{t+1}\eta^2+M_0\varepsilon^{q-1}\int_\Omega F^{t+1}\eta^2+\frac{4}{t}\int_\Omega F^{t+1}|\nabla\eta|^2.
\end{align*}
Let
\begin{align*}
t\in\left(\max\left\{\frac{8}{C_0\theta^2},1\right\},~+\infty\right),
\end{align*}
then we have
\begin{align}\label{2.14*}
\begin{split}
&t\int_\Omega F^{t-1}|\nabla F|^2\eta^2+C_0\int_\Omega(v+\varepsilon)^\theta  F^{t+2}\eta^2\\
\le&4\int_\Omega\mathrm{Ric}_-F^{t+1}\eta^2+2M_0\varepsilon^{q-1}\int_\Omega F^{t+1}\eta^2+\frac{8}{t}\int_\Omega F^{t+1}|\nabla\eta|^2.
\end{split}
\end{align}
By $\chi$-type Sobolev inequality, there holds
\begin{align*}
\mathbb{S}_\chi(M)\left\|F^{\frac{t+1}{2}}\eta\right\|_{L^{2\chi}(\Omega)}^2\le\int_\Omega\left|\nabla\left(F^{\frac{t+1}{2}}\eta\right)\right|^2.
\end{align*}
Hence,
\begin{align*}
\mathbb{S}_\chi(M)\left\|F^{t+1}\eta^2\right\|_{L^{\chi}(\Omega)}^2\le\frac{(t+1)^2}{2}\int_\Omega F^{t-1}|\nabla F|^2\eta^2+2\int_\Omega F^{t+1}|\nabla\eta|^2.
\end{align*}
Substituting the above inequality into \eqref{2.14*}, then we obtain
\begin{align*}
&\mathbb{S}_\chi(M)\frac{2t}{(t+1)^2}\left\|F^{t+1}\eta^2\right\|_{L^{\chi}(\Omega)}^2+C_0\int_\Omega(v+\varepsilon)^\theta  F^{t+2}\eta^2\\
\le&4\int_\Omega\mathrm{Ric}_-F^{t+1}\eta^2+2M_0\varepsilon^{q-1}\int_\Omega F^{t+1}\eta^2+\left[\frac{8}{t}+\frac{4t}{(t+1)^2}\right]\int_\Omega F^{t+1}|\nabla\eta|^2.
\end{align*}
Therefore, we obtain
\begin{align*}
&\frac{\mathbb{S}_\chi(M)}{2t}\left\|F^{t+1}\eta^2\right\|_{L^{\chi}(\Omega)}^2+C_0\varepsilon^\theta\int_\Omega F^{t+2}\eta^2\\
\le&4\int_\Omega\mathrm{Ric}_-F^{t+1}\eta^2+2M_0\varepsilon^{q-1}\int_\Omega F^{t+1}\eta^2+\frac{12}{t}\int_\Omega F^{t+1}|\nabla\eta|^2.
\end{align*}
Combining above, we finish the proof of Lemma \ref{lemma2.4*}.

\end{proof}

Next, we shall provide the following $L^{(t+1)\chi}$ bound of $(v+1)^{-\theta}v^{q-1}$.
\begin{lem}\label{lemma2.5*}
Let $(M,g)$ be a complete manifold on which the $\chi$-type Sobolev inequality holds. Assume $v$ is a positive solution to \eqref{se} on the geodesic ball $B(o,R)\subset M$. Furthermore, let $n\ge4$, $q\in\left[\frac{n+3}{n-1},~\frac{n+2}{n-2}\right)$ and $\Omega=B(o,R)$. Define $t$ and $\theta$ as in Lemma \ref{lemma2.4*}. Assume further that
\begin{align*}
H(t)=\frac{\mathbb{S}_\chi (M)}{2t}-4\left\|\mathrm{Ric_-}\right\|_{L^{\frac{\chi}{\chi-1}}}>0.
\end{align*}
Then there exists $C_3=C_3(n,q,t)>0$ such that
\begin{align}\label{2.15*}
H(t)\left\{\int_{B\left(o,\frac{3}{4}R\right)}\left[(v+1)^{-\theta}v^{q-1}\right]^{(t+1)\chi}\right\}^\frac{1}{\chi}\le C_3\left(\frac{1}{R^{2(t+2)}\varepsilon^{\theta(t+1)}}+\varepsilon^{(q-1)+(q-\theta-1)(t+1)}\right)V,
\end{align}
where $V$ denotes the volume of geodesic ball $B(o,R)$ and $\varepsilon\in(0,1)$ is any positive number.
\end{lem}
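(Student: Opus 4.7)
\medskip
\noindent\textbf{Proof proposal.} The starting point is the integral inequality from \lemref{lemma2.4*}. I will apply it to a carefully chosen cutoff of the form $\eta=\eta_1^{t+2}$, where $\eta_1\in C_0^\infty(B(o,R))$ satisfies $\eta_1\equiv 1$ on $B(o,3R/4)$ and $|\nabla\eta_1|\le C/R$. This choice ensures that the auxiliary quantity $|\nabla\eta|^{2(t+2)}/\eta^{2(t+1)}$ is pointwise bounded by $C_{n,q,t}/R^{2(t+2)}$, which will allow us to trade the $|\nabla\eta|^2$ term for the $F^{t+2}$ term on the left-hand side. First I apply Hölder's inequality with exponents $\chi/(\chi-1)$ and $\chi$ to the Ricci term,
\begin{equation*}
4\int_\Omega \mathrm{Ric}_- F^{t+1}\eta^2 \;\le\; 4\|\mathrm{Ric}_-\|_{L^{\chi/(\chi-1)}}\,\|F^{t+1}\eta^2\|_{L^\chi(\Omega)},
\end{equation*}
and move it to the left-hand side. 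Under the assumption $H(t)>0$, this produces the coefficient $H(t)$ in front of $\|F^{t+1}\eta^2\|_{L^\chi(\Omega)}$.

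Next I treat the two remaining terms on the right, $2M_0\varepsilon^{q-1}\int F^{t+1}\eta^2$ and $(12/t)\int F^{t+1}|\nabla\eta|^2$, by Hölder and Young with conjugate exponents $(t+2)/(t+1)$ and $t+2$. For the $|\nabla\eta|^2$ term,
\begin{equation*}
\int_\Omega F^{t+1}|\nabla\eta|^2 \le \left(\int_\Omega F^{t+2}\eta^2\right)^{\!\frac{t+1}{t+2}}\!\left(\int_\Omega \frac{|\nabla\eta|^{2(t+2)}}{\eta^{2(t+1)}}\right)^{\!\frac{1}{t+2}}\!\!\le\! C_{n,q,t}\!\left(\int_\Omega F^{t+2}\eta^2\right)^{\!\frac{t+1}{t+2}}\!\left(\frac{V}{R^{2(t+2)}}\right)^{\!\frac{1}{t+2}}.
\end{equation*}
Using Young's inequality with a parameter $\delta$ calibrated as $\delta\sim\varepsilon^\theta\cdot c(n,q,t)$, the $F^{t+2}$ piece becomes $\le (C_0/4)\varepsilon^\theta\int F^{t+2}\eta^2$ and so is absorbable into the first term of the LHS of Lemma \ref{lemma2.4*}; the remainder is of order $\delta^{-(t+1)}V/R^{2(t+2)}\sim V/(R^{2(t+2)}\varepsilon^{\theta(t+1)})$, which yields exactly the first term in the conclusion. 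The same scheme is applied to $\int F^{t+1}\eta^2$ with parameter $\delta'\sim\varepsilon^{\theta-(q-1)}$, chosen so that $2M_0\varepsilon^{q-1}\delta'\le (C_0/4)\varepsilon^\theta$; the remainder is $\varepsilon^{q-1}\delta'^{-(t+1)}V\sim\varepsilon^{(q-1)+(q-\theta-1)(t+1)}V$, matching the second term in the conclusion.

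After these two absorptions we obtain
\begin{equation*}
\tfrac{C_0}{2}\varepsilon^\theta\!\int_\Omega\! F^{t+2}\eta^2 + H(t)\|F^{t+1}\eta^2\|_{L^\chi(\Omega)} \le C_3(n,q,t)\!\left(\frac{1}{R^{2(t+2)}\varepsilon^{\theta(t+1)}} + \varepsilon^{(q-1)+(q-\theta-1)(t+1)}\right)\!V.
\end{equation*}
Discarding the first nonnegative term on the left and converting back to $v$ via the pointwise bound $F=(v+\varepsilon)^{-\theta}(\theta^2|\nabla v|^2/v^2+dv^{q-1})\ge d(v+\varepsilon)^{-\theta}v^{q-1}\ge d(v+1)^{-\theta}v^{q-1}$, where the last inequality uses $\varepsilon\in(0,1)$ and $\theta>0$, and restricting the LHS from $B(o,R)$ to $B(o,3R/4)$ where $\eta\equiv 1$, gives the desired estimate after absorbing the constant $d^{t+1}$ into $C_3$.

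\medskip
The main technical obstacle is the simultaneous bookkeeping of the two distinct $\varepsilon$-scales in the Young's inequality parameters $\delta$ and $\delta'$: each must be tuned so that the absorbed contributions sum to no more than the full $C_0\varepsilon^\theta\int F^{t+2}\eta^2$ available on the left, while simultaneously reproducing precisely the $\varepsilon$-exponents $-\theta(t+1)$ and $(q-1)+(q-\theta-1)(t+1)$ that appear in the statement. This balance is what forces the particular choice of cutoff $\eta=\eta_1^{t+2}$ (ensuring $|\nabla\eta|^{2(t+2)}/\eta^{2(t+1)}$ is bounded), and its correct calibration is the heart of the computation.
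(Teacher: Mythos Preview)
Your proof is correct and follows essentially the same route as the paper: H\"older on the Ricci term to produce the coefficient $H(t)$, the cutoff $\eta=\eta_1^{t+2}$ to make $|\nabla\eta|^{2(t+2)}/\eta^{2(t+1)}$ bounded by $C/R^{2(t+2)}$, and Young's inequality to absorb the $|\nabla\eta|^2$ contribution into the $C_0\varepsilon^\theta\int F^{t+2}\eta^2$ term with remainder $\sim V/(R^{2(t+2)}\varepsilon^{\theta(t+1)})$. The only noteworthy difference is your treatment of the $2M_0\varepsilon^{q-1}\int F^{t+1}\eta^2$ term: you use Young's inequality $F^{t+1}\le \delta' F^{t+2}+C(\delta')^{-(t+1)}$ with $\delta'\sim\varepsilon^{\theta-(q-1)}$, whereas the paper splits the domain at the level set $\{F\ge (4M_0/C_0)\varepsilon^{q-\theta-1}\}$, absorbing on the super-level set and bounding $F^{t+1}$ pointwise on the complement. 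Both devices yield the same remainder $C\varepsilon^{(q-1)+(q-\theta-1)(t+1)}V$ and are interchangeable here; your Young's inequality version is arguably cleaner since it avoids introducing an auxiliary set.
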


\begin{proof}
By H\"older's inequality, we have
\begin{align*}
\int_\Omega\mathrm{Ric}_-F^{t+1}\eta^2\le\left\|\mathrm{Ric_-}\right\|_{L^{\frac{\chi}{\chi-1}}}\left\|F^{t+1}\eta^2\right\|_{L^{\chi}(\Omega)}.
\end{align*}
Substituting this into \eqref{2.12*}, we conclude that
\begin{align}\label{2.16*}
C_0\varepsilon^\theta\int_\Omega F^{t+2}\eta^2+H(t)\left\|F^{t+1}\eta^2\right\|_{L^\chi(\Omega)}\le2M_0\varepsilon^{q-1}\int_\Omega F^{t+1}\eta^2+\frac{12}{t}\int_\Omega F^{t+1}|\nabla\eta|^2.
\end{align}
Now, choose $\eta_1\in C_0^\infty(\Omega)$ such that
\begin{align*}
\begin{cases}
0\le\eta_1\le1,\quad\eta_1\equiv 1~in~B\left(o,\frac{3}{4}R\right);\\[3mm]
|\nabla\eta_1|\le\frac{C(n)}{R}.
\end{cases}
\end{align*}
and let $\eta=\eta_1^{t+2}$. Direct calculation shows that
\begin{align*}
|\nabla\eta|^2\le\frac{C^2(n)}{R^2}(t+2)^2\eta^{\frac{2(t+1)}{t+2}}.
\end{align*}
By Young inequality, we obtain
\begin{align}\label{2.17*}
\begin{split}
\frac{12}{t}\int_\Omega F^{t+1}|\nabla\eta|^2\le&\frac{C^2(n)}{R^2}\frac{12(t+2)^2}{t}\int_\Omega F^{t+1}\eta^{\frac{2(t+1)}{t+2}}\\
\le&\frac{C_0\varepsilon^\theta}{2}\int_\Omega F^{t+2}\eta^2+C_1(n,q,t)\varepsilon^{-\theta(t+1)}\frac{V}{R^{2(t+2)}},
\end{split}
\end{align}
where $C_1(n,q,t)$ is positive and depends only on $n$, $q$ and $t$.

Set
\begin{align*}
\widetilde{\Omega}=\left\{x\in\Omega:F\ge\frac{4M_0}{C_0}\varepsilon^{q-\theta-1}\right\},
\end{align*}
then we have
\begin{align}\label{2.18*}
\begin{split}
2M_0\varepsilon^{q-1}\int_\Omega F^{t+1}\eta^2=&2M_0\varepsilon^{q-1}\int_{\widetilde{\Omega}} F^{t+1}\eta^2+2M_0\varepsilon^{q-1}\int_{\Omega\setminus\widetilde{\Omega}}F^{t+1}\eta^2\\
\le&\frac{C_0\varepsilon^\theta}{2}\int_{\widetilde{\Omega}} F^{t+2}\eta^2+2M_0\varepsilon^{q-1}\int_{\Omega\setminus\widetilde{\Omega}}\left(\frac{4M_0}{C_0}\varepsilon^{q-\theta-1}\right)^{t+1}\\
\le&\frac{C_0\varepsilon^\theta}{2}\int_\Omega F^{t+2}\eta^2+C_2(n,q,t)\varepsilon^{(q-1)+(q-\theta-1)(t+1)}V,
\end{split}
\end{align}
where $C_2(n,q,t)$ is a positive and depends only on $n$, $q$ and $t$.

Substituting \eqref{2.17*} and \eqref{2.18*} into \eqref{2.16*} yields
\begin{align}\label{2.19*}
\begin{split}
H(t)\left\|F^{t+1}\eta^2\right\|_{L^\chi(\Omega)}\le C_1(n,q,t)\varepsilon^{-\theta(t+1)}\frac{V}{R^{2(t+2)}}+C_2(n,q,t)\varepsilon^{(q-1)+(q-\theta-1)(t+1)}V.
\end{split}
\end{align}
From the definition of $F$,
we conclude that
\begin{align*}
&H(t)\left\{\int_{B\left(o,\frac{3}{4}R\right)}\left[(v+\varepsilon)^{-\theta}\left(\frac{|\nabla\omega|^2}{\omega^2}+dv^{q-1}\right)\right]^{(t+1)\chi}\right\}^\frac{1}{\chi}\\
\le&C_1(n,q,t)\varepsilon^{-\theta(t+1)}\frac{V}{R^{2(t+2)}}+C_2(n,q,t)\varepsilon^{(q-1)+(q-\theta-1)(t+1)}V.
\end{align*}
Hence,
\begin{align*}
H(t)\left\{\int_{B\left(o,\frac{3}{4}R\right)}\left[(v+\varepsilon)^{-\theta}v^{q-1}\right]^{(t+1)\chi}\right\}^\frac{1}{\chi}\le C_3(n,p,t)\left(\frac{1}{R^{2(t+2)}\varepsilon^{\theta(t+1)}}+\varepsilon^{(q-1)+(q-\theta-1)(t+1)}\right)V,
\end{align*}
where $C_3(n,q,t)$ is a positive number and depends only on $n$, $q$ and $t$. We finish the proof of Lemma \ref{lemma2.5*}.
\end{proof}

Now, we are ready to prove Theorem \ref{theorem0} in the case $n\ge 4$ and $\frac{n+3}{n-1}\le q<\frac{n+2}{n-2}$.

\begin{proof}[\bf Proof of Theorem \ref{theorem0} for the case $n\ge 4$ $\&$ $\frac{n+3}{n-1}\le q<\frac{n+2}{n-2}$:]
Since $\theta\in\left(0,\,\frac{2}{n-2}\right)$, we have
\begin{align*}
q-\theta-1>0.
\end{align*}
Next, we choose a large $t$ such that, as $\chi\in(1, \frac{n}{n-2})$,
\begin{align}\label{2.20*}
t+3-\frac{2\chi}{\chi-1}>0\quad\mbox{and}\quad \frac{1}{\theta}\left[(q-1)+(q-\theta-1)(t+1)\right]-\frac{2\chi}{\chi-1}>0,
\end{align}
or as $\chi= \frac{n}{n-2}$,
\begin{align}\label{2.20*}
t+3-\beta^*>0\quad\mbox{and}\quad \frac{1}{\theta}\left[(q-1)+(q-\theta-1)(t+1)\right]-\beta^*>0,
\end{align}
and the condition \eqref{2.11*} holds. Once these have been done, we see that there exists a positive constant $C(n,q,\beta^*)$ depending on $n$, $q$ and $\beta^*$ such that, if
\begin{align*}
\|\mathrm{Ric}_-\|_{L^\frac{\chi}{\chi-1}}\le C(n,q,\beta^*)\mathbb{S}_\chi(M),
\end{align*}
then Lemma \ref{lemma2.5*} holds.

By the fact that $\varepsilon\in(0,1)$, we infer from Lemma \ref{lemma2.5*} that
\begin{align*}
 H(t)\left\{\int_{B\left(o,\frac{3}{4}R\right)}\left[(v+1)^{-\theta}v^{q-1}\right]^{(t+1)\chi}\right\}^\frac{1}{\chi}\le C_3\left(\frac{1}{R^{2(t+2)}\varepsilon^{\theta(t+1)}}+\varepsilon^{(q-1)+(q-\theta-1)(t+1)}\right)V,
 \end{align*}
 where $\varepsilon$, $\theta$, $t$ and $H(t)$ are defined in Lemma \ref{lemma2.5*}.

Let
\begin{align*}
\varepsilon=R^{-\frac{1}{\theta}}\quad(R\ge1),
\end{align*}
then we have
\begin{align*}
H(t)\left\{\int_{B\left(o,\frac{3}{4}R\right)}\left[(v+1)^{-\theta}v^{q-1}\right]^{(t+1)\chi}\right\}^\frac{1}{\chi}\le C_3\left(\frac{1}{R^{t+3}}+\frac{1}{R^{\frac{1}{\theta}\left[(q-1)+(q-\theta-1)(t+1)\right]}}\right)V.
\end{align*}
Since 
$$\mathrm{vol}\left(B(o,R)\right)=O(R^{\frac{2\chi}{\chi-1}})\quad \mbox{or}\quad \mathrm{vol}\left(B(o,R)\right)=O(R^{\beta^*}),$$ 
by setting $\beta^*_\chi= \frac{2\chi}{\chi-1}$ as $1<\chi<\frac{n}{n-2}$ and $\beta^*_\chi=\beta^*$ as $\chi=\frac{n}{n-2}$, we obtain
\begin{align}\label{2.21*}
H(t)\left\{\int_{B\left(o,\frac{3}{4}R\right)}\left[(v+1)^{-\theta}v^{q-1}\right]^{(t+1)\chi}\right\}^\frac{1}{\chi}\le C_4\left(\frac{1}{R^{t+3-\beta^*_\chi}}+\frac{1}{R^{\frac{1}{\theta}\left[(q-1)+(q-\theta-1)(t+1)\right]-\beta^*_\chi}}\right).
\end{align}
Combining \eqref{2.20*} and \eqref{2.21*} together and letting $R\longrightarrow+\infty$, we arrive at
\begin{align*}
\left\{\int_M\left[(v+1)^{-\theta}v^{q-1}\right]^{(t+1)\chi}\right\}^\frac{1}{\chi}=0.
\end{align*}
Therefore, $v\equiv0$. This contradicts to the fact $v$ is a positive solution. We complete the proof.
\end{proof}

\subsection{\textbf{Case 2: $n\in\{2,3\} \quad\&\quad q\in\left[\frac{n+3}{n-1},~\frac{n+2}{(n-2)_+}\right)$}}\

\subsubsection{\textbf{Second auxiliary function $G$ and  estimation of the leading coefficients. }}\

 For $\theta\neq0$, let $\omega=(v+\varepsilon)^{-\theta}$, then we have,

\begin{align}\label{2.23*}
    \Delta\omega=\left(1+\frac{1}{\theta}\right)\frac{|\nabla\omega|^2}{\omega}+\theta\omega \frac{v^q}{v+\varepsilon}.
\end{align}
For undetermined real numbers $\varepsilon>0$ and $d>0$, define the second type auxiliary function:
\begin{align}\label{2.24*}
    G=(v+\varepsilon)^{-\theta}\left(\frac{|\nabla\omega|^2}{\omega^2}+dv^{q-1}\right).
\end{align}
\begin{lem}[Lemma 2.2 and Lemma 6.5 in \cite{Lu}]\label{lemma2.7*}
There holds:
\begin{align*}
\begin{split}
\omega^{-1}\Delta G=&2\omega^{-2}\left|\nabla^2\omega-\frac{\Delta \omega}{n}g\right|^2+2\omega^{-2}\mathrm{Ric}(\nabla\omega,\nabla\omega)+\frac{2}{\theta}\omega^{-1}\langle\nabla G,\nabla\ln\omega\rangle\\
&+U\frac{|\nabla\omega|^4}{\omega^4}+V\frac{|\nabla\omega|^2}{\omega^2}v^{q-1}+Wv^{2(q-1)},
\end{split}
\end{align*}
where
\begin{align*}
U=&\left[\frac{2}{n}\left(1+\frac{1}{\theta}\right)-1\right]\left(1+\frac{1}{\theta}\right),\\
V=&\left[\frac{4}{n}(1+\theta)+2+\theta\right]\frac{v}{v+\varepsilon}-2q+\frac{2d}{\theta}\left(\frac{q-1}{\theta}\frac{v+\varepsilon}{v}-1\right)\\
&+d\left[\frac{(q-1)(q-2)}{\theta^2}\frac{(v+\varepsilon)^2}{v^2}+1+\frac{1}{\theta}-\frac{2(q-1)}{\theta}\frac{v+\varepsilon}{v}\right],\\
W=&\frac{2\theta^2}{n}\frac{v^2}{(v+\varepsilon)^2}+d\left(\frac{\theta v}{v+\varepsilon}+1-q\right).
\end{align*}

Moreover, for $n\in\{2,3\}$ and $q\in\left[\frac{n+3}{n-1}, ~\frac{n+2}{(n-2)_+}\right)$, there exist constants $\theta=\theta(n,q)\in\left(0,\, q-1\right)$ if $n=2$ or $\theta=\theta(n,q)\in\left(0,\, \min\left\{2,q-1\right\}\right)$ if $n=3$, $d=d(n,q)>0$, $L=L(n,q)>0$ and $\widetilde{M}=\widetilde{M}(n,q)>0$ such that for any $\varepsilon>0$,
\begin{align*}
U&\ge U_0>0,\\
V&\ge V_0-\widetilde{M}\bar{\chi}_{\left\{x\in B(o,R):\, v(x)<L\varepsilon\right\}},\\
W&\ge W_0-\widetilde{M}\bar{\chi}_{\left\{x\in B(o,R):\, v(x)<L\varepsilon\right\}},
\end{align*}
where $U_0$, $V_0$ and $W_0$ are positive constants depending only on $n$ and $q$.
\end{lem}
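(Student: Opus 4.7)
The plan is to mirror the strategy of Lemma \ref{lemma2.1*}, but adapted to the translated substitution $\omega = (v+\varepsilon)^{-\theta}$. The $\varepsilon$-shift introduces the extra factors $v/(v+\varepsilon)$ and $(v+\varepsilon)/v$ in the leading coefficients of the Bochner expansion, and this is precisely the additional flexibility needed to push the argument into the low dimensions $n \in \{2,3\}$.

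For the pointwise identity, the first step is to verify \eqref{2.23*} by direct chain-rule computation from $\Delta v = -v^q$. Next, I would apply the Bochner formula
$$\tfrac{1}{2}\Delta|\nabla\omega|^2 = \left|\nabla^2\omega - \tfrac{\Delta\omega}{n}g\right|^2 + \tfrac{(\Delta\omega)^2}{n} + \langle\nabla\omega, \nabla\Delta\omega\rangle + \ric(\nabla\omega,\nabla\omega),$$
rewrite $G = \omega^{-1}|\nabla\omega|^2 + d\omega v^{q-1}$, and expand $\Delta G$ via the quotient and product rules. Substituting the Bochner identity, \eqref{2.23*}, and $\Delta v^{q-1} = (q-1)(q-2)v^{q-3}|\nabla v|^2 - (q-1)v^{2q-2}$, and then grouping coefficients of $|\nabla\omega|^4/\omega^4$, $|\nabla\omega|^2 v^{q-1}/\omega^2$ and $v^{2(q-1)}$, will produce the claimed identity. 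The drift term $\tfrac{2}{\theta}\omega^{-1}\langle\nabla G,\nabla\ln\omega\rangle$ emerges from recombining all cross terms using $\nabla\omega/\omega = -\theta\nabla v/(v+\varepsilon)$, which pairs cleanly against $\nabla G$ and in turn dictates what residual is assigned to $V$ and $W$.

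For the lower bounds, $U$ depends only on $n$ and $\theta$, and is strictly positive precisely when $\theta \in (0,\infty)$ for $n=2$ or $\theta \in (0,2)$ for $n=3$. The delicate step is $V$: since $\varepsilon$ enters only through the ratios $v/(v+\varepsilon)$ and $(v+\varepsilon)/v$, on $\{v \geq L\varepsilon\}$ these ratios are pinched in $[L/(L+1),1]$ and $[1,(L+1)/L]$ respectively, so $V$ differs from its $\varepsilon\to 0$ limit by $O(1/L)$. Lu's analysis of the $\varepsilon=0$ case in \cite{Lu} provides an explicit $\theta = \theta(n,q)$ (with $\theta < q-1$, and $\theta < 2$ when $n=3$) together with $d = d(n,q) > 0$ such that the limiting $V$ is strictly positive; for $L$ large enough one then has $V \geq V_0 > 0$ on $\{v \geq L\varepsilon\}$, and on the complementary set the ratios may blow up but $V$ itself remains pointwise bounded in absolute value by some $\widetilde{M} = \widetilde{M}(n,q,\theta,d,L)$, producing the $-\widetilde{M}\bar\chi_{\{v < L\varepsilon\}}$ error term. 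The coefficient $W$ is handled identically. The main obstacle is the simultaneous algebraic tuning of the pair $(\theta, d)$: unlike in Lemma \ref{lemma2.1*}, one cannot push $\theta$ up to $2/(n-2)$ when $n \in \{2,3\}$ because $U$ degenerates first, so one must exploit the quadratic-in-$q$ structure of $V$ and the new factor $v^2/(v+\varepsilon)^2$ sitting in front of $2\theta^2/n$ inside $W$. I would follow Lu's case split (Lemma 6.5 of \cite{Lu}) on the subintervals of $q$ to produce the explicit admissible choices.
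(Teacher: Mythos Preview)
The paper does not prove this lemma; it is quoted verbatim from Lu \cite{Lu} (Lemmas~2.2 and~6.5) with no argument supplied, exactly as was done for the companion Lemma~\ref{lemma2.1*}. Your sketch therefore already goes beyond what the paper offers, and the route you describe---Bochner formula for $\omega$, expansion of $\Delta G$ by the product and quotient rules, substitution of \eqref{2.23*} and of $\Delta v^{q-1}$, then grouping by powers of $|\nabla\omega|^2/\omega^2$ and $v^{q-1}$---is the standard one and matches Lu's computation.

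One point needs correction. You assert that on $\{v<L\varepsilon\}$ the coefficient $V$ ``remains pointwise bounded in absolute value by some $\widetilde M$.'' This is false: $V$ contains the term $d\,\frac{(q-1)(q-2)}{\theta^{2}}\,\frac{(v+\varepsilon)^{2}}{v^{2}}$, which blows up to $+\infty$ as $v\to 0^{+}$. What is true---and all that the lemma claims---is the one-sided bound $V\ge V_0-\widetilde M$ on that set. This survives because in the present range $q\ge\frac{n+3}{n-1}\ge 3$ one has $(q-1)(q-2)>0$, so as a function of $s=(v+\varepsilon)/v\in[1,\infty)$ the expression $V$ tends to $+\infty$ as $s\to\infty$ and is hence bounded \emph{below} by continuity. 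For $W$ your reasoning is fine as stated, since $W$ involves only the bounded ratio $v/(v+\varepsilon)$. With this adjustment to the argument for $V$, the approach is sound.
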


\begin{proof}[\bf {Proof of Theorem \ref{theorem0} for the case $n\in\{2,3\}$ $\&$ $\frac{n+3}{n-1}\le q<\frac{n+2}{(n-2)_+}$:}]\label{1282}
In the case $n\in\{2,3\}$ and $q$ satisfies
$$\frac{n+3}{n-1}\le q<\frac{n+2}{(n-2)_+},$$
the proof of Theorem \ref{theorem0} goes almost the same as that in the case $n\geq4$ and $\frac{n+3}{n-1}\le q<\frac{n+2}{n-2}$. Now, we sketch the proof here.

Following the lines of proof of Lemma \ref{lemma2.3*}, we obtain there exist $C_0=C_0(n,q)>0$ and $M_0=M_0(n,q)>0$,  such that  for any $\varepsilon>0$, there holds
\begin{align*}
\Delta G\ge-2\mathrm{Ric}_-G-\frac{2}{\theta}|\nabla G||\nabla\ln\omega|+C_0(v+\varepsilon)^\theta G^2-M_0\varepsilon^{q-1}G.
\end{align*}
Then, following the lines of proof of Lemma \ref{2.4*}, we obtain that,  for
\begin{align*}
t\in\left(\max\left\{\frac{8}{C_0\theta^2},1\right\},~+\infty\right),
\end{align*}
the following holds
\begin{align*}
\begin{split}
&C_0\varepsilon^\theta\int_\Omega G^{t+2}\eta^2+\frac{\mathbb{S}_\chi(M)}{2t}\left\|G^{t+1}\eta^2\right\|_{L^\chi(\Omega)}\\
\le&4\int_\Omega\mathrm{Ric}_-G^{t+1}\eta^2+2M_0\varepsilon^{q-1}\int_\Omega G^{t+1}\eta^2+\frac{12}{t}\int_\Omega G^{t+1}|\nabla\eta|^2,
\end{split}
\end{align*}
where $\eta\ge0$ and $\eta\in C_0^\infty(\Omega)$.

Once this has been done, it follows from the proof of Lemma \ref{2.5*} that, if
\begin{align*}
H(t)=\frac{\mathbb{S}_\chi (M)}{2t}-4\left\|\mathrm{Ric_-}\right\|_{L^{\frac{\chi}{\chi-1}}}>0,
\end{align*}
then there exists $C_3=C_3(n,q,t)>0$ such that
\begin{align*}
H(t)\left\{\int_{B\left(o,\frac{3}{4}R\right)}\left[(v+1)^{-\theta}v^{q-1}\right]^{(t+1)\chi}\right\}^\frac{1}{\chi}\le C_3\left(\frac{1}{R^{2(t+2)}\varepsilon^{\theta(t+1)}}+\varepsilon^{(q-1)+(q-\theta-1)(t+1)}\right)V,
\end{align*}
where $V$ denotes the volume of geodesic ball $B(o,R)$ and $\varepsilon\in(0,1)$ is any positive number.

Finally, following the lines of proof of Theorem \ref{theorem0} for the case $n\geq4$ and $\frac{n+3}{n-1}\le q<\frac{n+2}{n-2}$ we can finish the proof.\end{proof}

\begin{proof}[\bf {Proof of Theorem \ref{theorem0}:}]
Combining \thmref{main}, and the conclusions in this section,  we finish the proof of Theorem \ref{theorem0}.
\end{proof}

\section{Geometric applications}
Now we want to make use of some conclusions obtain in the previous sections to study the topological properties of a manifold which enjoy a Sobolev inequality. As mentioned in the previous, many manifolds, for instances Riemannian manifold with $\ric\geq0$ and $0 < \mathrm{AVR}_g \leq 1$, minimal hypersurfaces \cite{MS} and Ricci shrinker \cite{LW,	 WW}, support Sobolev inequalities. In particular, Brendle \cite{Bre23sobolev} obtained the sharp Sobolev inequality on complete noncompact Riemannian manifolds with nonnegative Ricci curvature using the so-called ABP method, addressing an open question posed by Cordero-Erausquin, Nazaret, and Villani \cite{CorNazVil04mass} for such manifold. Balogh and Krist\'aly \cite{Bal-Kris} provided an alternative proof of Brendle's rigidity result and confirmed that the Sobolev constant is sharp. Concretely, their results can be stated as follows:

\begin{quotation}
Let $\left(M^n, g\right)$ be a noncompact, complete $n$-dimensional Riemannian manifold with $\ric\geq0$ and $0 < \mathrm{AVR}_g \leq 1$, where $\ric$ denote the Ricci curvature of $(M, g)$. Then for all $v \in C_0^\infty\left(M^n\right)$, there holds
\begin{align*}
\norm{v}_{L^{2n/(n-2)}\left(M^n\right)} \leq \mathcal{S}\left(\mathbb{R}^n\right)\mathrm{AVR}_g^{-1/n}\norm{\nabla v}_{L^2\left(M^n\right)}.
\end{align*}
Furthermore, the constant $\mathcal{S}\left(\mathbb{R}^n\right)\mathrm{AVR}_g^{-1/n}$ is sharp.
\end{quotation}
Here, $\mathrm{AVR}_g$ is the asymptotic volume ratio of $\left(M^n, g\right)$, defined as
\begin{align*}
\mathrm{AVR}_g = \lim_{R \to \infty} \dfrac{\mathrm{Vol}(B_R(o))}{\omega_n R^n},
\end{align*}
and $\omega_n$ is the volume of the unit ball in $\mathbb{R}^n$ and $\mathcal{S}\left(\mathbb{R}^n\right)$ is the best Sobolev constant in $\mathbb{R}^n$.

Using harmonic function theory to study geometric and topological properties of manifolds has a long history, for more details we refer to \cite{LTam, LTam1, W} and references therein. Here, firstly we need to recall the definition of end in a complete noncompact manifold:
\begin{defn}
Let $(M,g)$ be a complete noncompact Riemannian manifold. We say that $M$ has $k$ ends $E_1 , \cdots , E_k$, if there exists a bounded open set $U$ so that
$$M\setminus \bar{U} =\cup_{i=1}^k E_i$$
with each $E_i$ being a noncompact connected component of $M\setminus\bar{U}$. Usually, we also call $E_1 , \cdots , E_k$ as the ends corresponding
to $U$.
\end{defn}
Denote usually the linear space spanned by bounded harmonic functions on $(M, g)$ by $H^\infty(M)$. Now let's recall some previous conclusions. The first named author of this paper has ever shown the following:

\begin{thm}[\cite{W}, Theorem 3.3]\label{thm4.1}
Let $(M, g)$ be a complete noncompact Riemannian manifold with $\dim(M)\geq 3$, Sobolev constant $\mathbb{S}_{\frac{n}{n-2}}(M)>0$ and $\ric(M) \geq 0$ outside some compact subset. Then $M$ has only finitely many ends $E_1, E_2, \cdots, E_k$ and $\dim H^\infty(M) = k$.
\end{thm}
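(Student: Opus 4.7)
My plan is to follow the Li--Tam harmonic function theory, combined with the Sobolev/volume machinery already developed in this paper. The proof splits into two parts: establishing that the number of ends is finite and at most $\dim H^\infty(M)$ (the lower bound $\dim H^\infty(M)\geq k$), and showing the opposite inequality $\dim H^\infty(M)\leq k$.

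First I would prove finiteness of the number of ends. By \thmref{ve}, the Sobolev inequality \eqref{chi-n} forces $\mathrm{Vol}(B_r)\gtrsim r^n$. Together with $\ric\geq 0$ outside a compact set $K$, a Cheeger--Gromoll/Cai-type splitting argument applied on each unbounded component of $M\setminus K$ bounds the number of ends uniformly in terms of $n$ and the ratio of the volume outside $K$ to the Bishop--Gromov model; the positive Sobolev constant gives the required noncollapsing so that this counting is finite. Write the ends as $E_1,\dots,E_k$.

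Next, for each end $E_i$ I would construct a bounded harmonic function $f_i$ by the Li--Tam exhaustion. Take an exhaustion $\{\Omega_R\}$ with $\partial\Omega_R\cap E_j$ nonempty for each $j$, and solve
\begin{equation*}
\Delta h_{i,R}=0 \quad\text{in } \Omega_R,\qquad h_{i,R}|_{\partial\Omega_R\cap E_j}=\delta_{ij}.
\end{equation*}
By the maximum principle $0\leq h_{i,R}\leq 1$. A gradient estimate obtained by Nash--Moser iteration against the Sobolev inequality \eqref{chi-n} (this is exactly the $a=0$, $p=2$ instance of the machinery in Section~3, which makes $h_{i,R}$ equicontinuous on compacta) lets me extract a subsequential limit $f_i\in H^\infty(M)$. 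The boundary behavior along $E_j$, together with the fact that each end is nonparabolic (the Sobolev inequality implies the existence of a positive Green's function with the required decay), yields
\begin{equation*}
\liminf_{x\to\infty,\,x\in E_i} f_i(x)=1,\qquad \limsup_{x\to\infty,\,x\in E_j} f_i(x)=0 \text{ for } j\neq i,
\end{equation*}
so the $f_i$ are linearly independent and $\dim H^\infty(M)\geq k$.

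For the reverse inequality, suppose $u_1,\dots,u_{k+1}\in H^\infty(M)$ are linearly independent. Restricted to each end $E_j$ they satisfy a harmonic Liouville principle: using that $\ric\geq 0$ on $E_j$ outside a compact set plus a Moser iteration (again powered by the Sobolev inequality), any bounded harmonic function on $E_j$ must have a limit $\ell_j(u)$ at infinity along $E_j$ (this is where the nonnegative Ricci hypothesis \emph{outside} a compact set plays the decisive role, promoting oscillation decay into a genuine limit). This produces a linear map $\Phi\colon H^\infty(M)\to \mathbb{R}^k$, $u\mapsto(\ell_1(u),\dots,\ell_k(u))$. If $\Phi(u)=0$ then $u$ tends to $0$ at every end, and the standard maximum principle on the exhaustion $\Omega_R$ forces $u\equiv 0$. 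Hence $\Phi$ is injective and $\dim H^\infty(M)\leq k$, completing the proof.

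The main obstacle is the limit-at-infinity statement $\ell_j(u)=\lim_{E_j\ni x\to\infty}u(x)$ for bounded harmonic $u$ on $M$. Establishing this requires more than a crude gradient estimate: one must show that the oscillation of $u$ over annuli in $E_j$ decays, and this combines (i) the Sobolev-inequality-based mean value/Harnack estimate for bounded harmonic functions and (ii) the nonnegative Ricci curvature on the end to close the iteration with a \emph{definite} decay rate rather than a bounded oscillation. Once this decay is in hand, the rest of the argument is a fairly standard application of Li--Tam theory.
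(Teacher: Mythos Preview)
The paper does not prove Theorem~\ref{thm4.1}; it is quoted verbatim from the first author's earlier work~\cite{W} (Theorem~3.3 there) and is recalled only to motivate the generalization in Theorem~\ref{thm5.2}. So there is no proof in the present paper to compare your attempt against.

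That said, your outline is essentially the correct Li--Tam strategy, and two of its three ingredients are proved in this paper in generalized form. The lower bound $\dim H^\infty(M)\geq k$ is exactly Theorem~\ref{thm5.2}, established by the same exhaustion-and-limit construction you sketch; note however that the paper does not prove pointwise limits $\lim_{E_j\ni x\to\infty}f_i(x)=\delta_{ij}$ but instead uses integral estimates \eqref{finn} together with the uniform volume lower bound from Theorem~\ref{ve} to show linear independence --- this sidesteps the oscillation-decay issue you flag as the main obstacle. The finiteness of ends is Theorem~\ref{term}, proved not by a Cheeger--Gromoll/Cai splitting as you suggest but by a direct covering count: the Sobolev lower bound $\vol(B_{r/4})\gtrsim r^n$ combined with the Bishop--Gromov upper bound $\vol(B_{2r})\lesssim r^n$ (available since $\ric\geq 0$ outside a compact set) uniformly bounds the number of disjoint $r/4$-balls centered on $\partial B_r$, hence the number of ends. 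The upper bound $\dim H^\infty(M)\leq k$ is not treated in this paper at all; your plan via asymptotic values $\ell_j(u)$ and the maximum principle is the standard route and is where the hypothesis $\ric\geq 0$ outside a compact set is genuinely needed.
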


In fact, this theorem and the Cheng-Yau's gradient estimate of positive harmonic functions (see \cite{CY}) imply that a complete noncompact Riemannian manifold, which satisfies $n=\dim(M)\geq 3$, Sobolev constant $\mathbb{S}_{\frac{n}{n-2}}(M)>0$ and $\ric(M) \geq 0$, has only an end.

The philosophy of the proof of the above theorem is: if $(M,g)$ has at least two ends and the Sobolev constant of $(M,g)$ is positive, then there exists a nonconstant, bounded, and positive harmonic function on $(M,g)$. Later on, this conclusion was also derived in \cite{CSZ}.

As an extension of \thmref{thm4.1}, we can take almost the same arguments as in \cite{DW} and \cite{W} to conclude the following theorem, i.e., \thmref{end}.

\begin{thm}\label{thm5.2}
Let $(M, g)$ be a complete noncompact Riemannian manifold with Sobolev constant $\mathbb{S}_{\chi}(M)>0$. If $(M, g)$ has at least $k$ ends, then $\dim(H^\infty(M))\geq k$.
\end{thm}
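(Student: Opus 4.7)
The plan is to follow the Li-Tam harmonic function construction \cite{LTam}, as adapted to the Sobolev setting by the first named author in \cite{W} and by Ding-Wang in \cite{DW}. The goal is to produce, for each of the $k$ ends $E_1,\ldots,E_k$, a bounded harmonic function $u_i\in H^\infty(M)$ that is ``close to $1$'' at infinity along $E_i$ and ``close to $0$'' at infinity along each $E_j$ with $j\neq i$, and then verify that the resulting $k$ functions are linearly independent.

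The first step is to establish that each end is non-parabolic, i.e.\ admits a positive Green's function vanishing at infinity. The $\chi$-type Sobolev inequality, being invariant under extension by zero, also holds when restricted to any end, and combined with the lower bound $\vol(B_r)\geq C(\chi,\mathbb{S}_\chi(M))\,r^{2\chi/(\chi-1)}$ from Theorem \ref{ve}, it yields a Faber-Krahn inequality on each $E_j$ that is strong enough to rule out parabolicity via standard capacity and heat-kernel arguments; here the strict inequality $2\chi/(\chi-1)>2$ is what matters.

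Next, fix a bounded open set $U$ with $M\setminus\bar U=\bigsqcup_{i=1}^k E_i$, and exhaust $M$ by geodesic balls $\{B_R\}$ with $U\subset B_R$. For each $i$ I solve the Dirichlet problem $\Delta u_{i,R}=0$ on $B_R$, with boundary data equal to $1$ on $\partial B_R\cap E_i$ and equal to $0$ on $\partial B_R\setminus\bar E_i$. The maximum principle gives $0\leq u_{i,R}\leq 1$, and a Nash-Moser iteration driven by the $\chi$-type Sobolev inequality (in the spirit of Lemma \ref{nm1}) provides uniform interior bounds on $|\nabla u_{i,R}|$ on compact subsets. A diagonal subsequence then converges locally uniformly to a bounded harmonic function $u_i:M\to[0,1]$.

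The final step, which is the principal obstacle, is to show
\begin{equation*}
\limsup_{x\to\infty\text{ in }E_i}u_i(x)=1,\qquad \liminf_{x\to\infty\text{ in }E_j}u_i(x)=0 \text{ for } j\neq i,
\end{equation*}
from which linear independence is immediate: any relation $\sum_i c_iu_i\equiv 0$ evaluated at infinity along each end forces all $c_i=0$. For $j\neq i$ one uses the Green's function on $E_j$ as a barrier to control the harmonic measure of the portion of $\partial B_R$ lying in $E_i$ as seen from a point far out in $E_j$, and the bound is stable in $R$ because $E_j$ is non-parabolic. A dual capacity-type lower bound prevents $u_i$ from vanishing on $E_i$. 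The delicate point is ensuring that these barriers remain effective in the limit $R\to\infty$, and this is precisely where Theorem \ref{ve}---rather than the Sobolev inequality alone---is decisive, since the volume lower bound is what produces the vanishing Green's function on each individual end.
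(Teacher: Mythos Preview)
Your outline is broadly correct but follows a genuinely different path from the paper. The paper adopts the Ding--Wang integral scheme more literally: it fixes a smooth comparison function $\phi_j$ (equal to $1$ on $E_j$, to $0$ on the other ends), sets $\rho_{j,i}=|u_j^i-\phi_j|$, uses the distributional inequality $\Delta\rho_{j,i}\geq -|\Delta\phi_j|$, tests against $\rho_{j,i}^{\alpha-1}$, and applies the $\chi$-Sobolev inequality directly to get $\|\rho_{j,i}\|_{L^{\alpha\chi}(\Omega_i)}$ bounded uniformly in $i$ by an $L^p$ norm of $\Delta\phi_j$. Passing to the limit yields $(1-u_j)\in L^{\alpha\chi}(E_j)$ and $u_j\in L^{\alpha\chi}(E_l)$ for $l\neq j$. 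Theorem~\ref{ve} then enters only at the very end, and not through non-parabolicity: the uniform lower bound $\vol(B_r(x))\geq c>0$ for all $x\in M$ means the $L^{\alpha\chi}$-finiteness forces the \emph{ball-averages} of $u_j$ and $1-u_j$ to tend to $0$ along the relevant ends, and linear independence is read off from these averaged limits rather than pointwise ones.

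Your route via non-parabolicity of each end and Green's-function barriers in the Li--Tam style is also legitimate, but heavier: you must verify non-parabolicity end by end (the Sobolev inequality does give this via a capacity lower bound---the volume growth is really a byproduct rather than the essential input here, so your emphasis on Theorem~\ref{ve} as ``decisive'' for the barriers is slightly misplaced), and then run harmonic-measure arguments that the paper bypasses entirely. The paper's method trades pointwise asymptotics for integral ones and needs only a single global consequence of Sobolev beyond the inequality itself, namely the uniform ball-volume lower bound. One small correction: Lemma~\ref{nm1} gives sup bounds for subsolutions, not gradient bounds; compactness of $\{u_{i,R}\}$ comes from standard local elliptic regularity (or Cheng--Yau on fixed compact sets), as the paper itself remarks.
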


\begin{proof} Actually, by taking almost the same arguments as in \cite{DW} and \cite{W}, we can show the theorem. For the sake of completeness, here we give the routine to construct bounded positive harmonic functions on such a manifold $(M,g)$ which has at least $k$ ends and the Sobolev constant $\mathbb{S}_\chi(M)$ of $(M,g)$ is of a positive lower bound.

We let $\Omega_i$ ($i=1, 2, \cdots$) be an exhaustion of $(M, g)$, i.e., each $\Omega_i$ is an open domain contained in $M$ and $\bar{\Omega}_i$ is compact, $\bar{\Omega}_i\subset\Omega_{i+1}$ for every $i\geq 1$, and $\cup_{i=1}^\infty\Omega_i=M$. For $i \geq i_0$ and $i_0$ sufficiently large, let
$$M \setminus \Omega_i = \cup^s_{j=1}E^{(i)}_j, \quad s \geq k$$
be the disjoint union of connected components. Pick $k$ ends of $\{E^{(i)}_j: 1\leq j\leq s\}$ and denote the $k$ ends of $(M,g)$ by $E_1^{(i_0)}, \cdots, E_{k-1}^{(i_0)}$ and $E_k^{(i_0)}$, and for the sake of convenience, by $E_1, \cdots, E_k$ without confusions.

Moreover, denote $E_j^i=E_j\cap\Omega_i$. Now we consider the following Dirichlet problems of harmonic functions:
$$\Delta u_j^i=0, \quad u= 0 \,\,\mbox{on}\,\,\partial\Omega_i\setminus\partial E_j^i\quad\mbox{and}\quad u_j^i=1\,\,\mbox{on}\,\,\partial E_j^i,$$
where $j=1, \cdots, k$. Thus we can obtain $k$ sequences of harmonic functions, denoted as $\{u_j^i\}$ ($j=1, 2, \cdots, k$).

As done in \cite{DW}, we let $\phi_j(x)$ be a real smooth function on $(M,g)$ such that
$$\phi_j(x)\equiv 1\quad\mbox{on}\,\, E_j^{(i_0)}\quad\mbox{and}\quad \phi_j(x)\equiv 0\quad\mbox{in} \,\, M\setminus(\Omega_{i_0}\cup E_j^{(i_0)});$$
and then define $\rho_{j,i}$ on $\Omega_i$ by
$$\rho_{j,i}=|u^i_j-\phi_j|\quad \mbox{for} \,\, i\geq i_0.$$
It is easy to see that $|\Delta\phi_j|\in L^p$ for any $p>0$ and in the sense of distribution (see \cite{DW})
$$\Delta\rho_{j,i}\geq -|\Delta\phi_j|.$$
We choose a test function $\rho_{j,i}^{\alpha-1}$ ($\alpha>1$) to multiply the above equation and integrate on $\Omega_i$
$$\int_{\Omega_i}\rho_{j,i}^{\alpha-1}\Delta\rho_{j,i}\geq -\int_{\Omega_i}\rho_{j,i}^{\alpha-1}|\Delta\phi_j|.$$
Then we have
$$\frac{4(\alpha-1)}{\alpha^2}\int_{\Omega_i}\left|\nabla\rho_{j,i}^{\frac{\alpha}{2}}\right|^2\leq \int_{\Omega_i}\rho_{j,i}^{\alpha-1}|\Delta\phi_j|.$$
Since $(M, g)$ enjoys a Sobolev inequality as follows
\begin{equation*}
\mathbb{S}_{\chi}(M)\left(\int_M f^{2\chi}dv\right)^{\frac{1}{\chi}}\leq\int_M |\nabla f|^2 dv,
\end{equation*}
we obtain
$$\left(\int_{\Omega_i}\rho_{j,i}^{\alpha\chi}\right)^{\frac{1}{\chi}}\leq \frac{\alpha^2}{4(\alpha-1)}\mathbb{S}_\chi(M)^{-1} \int_{\Omega_i}\rho_{j,i}^{\alpha-1}|\Delta\phi_j|.$$
Applying the H\"older inequality we can see
$$\left(\int_{\Omega_i}\rho_{j,i}^{\alpha\chi}\right)^{\frac{1}{\chi}}\leq C\left(\int_{\Omega_i}\rho_{j,i}^{\alpha\chi}\right)^{\frac{\alpha-1}{\alpha\chi}}
\left(\int_{\Omega_i}|\Delta\phi_j|^\frac{\alpha\chi}{\alpha\chi-\alpha+1}\right)^\frac{\alpha\chi-\alpha+1}{\alpha\chi}.$$
It follows that
\begin{equation}\label{first}
\left(\int_{\Omega_i}\rho_{j,i}^{\alpha\chi}\right)^{\frac{1}{\alpha\chi}}\leq C \left(\int_{\Omega_i}|\Delta\phi_j|^\frac{\alpha\chi}{\alpha\chi-\alpha+1}\right)^\frac{\alpha\chi-\alpha+1}{\alpha\chi}.
\end{equation}

By the same arguments as in \cite{W} and Corollary 4.3 of \cite{DW} we can see that there exists $k$ harmonic functions $u_1, \cdots, u_{k-1}$ and $u_k$ on $(M,g)$ such that by neglecting some subsequences $u_1^i, \cdots, u_{k-1}^i$ and $u_k^i$ converge respectively in the sense of $C^k$ ($k\geq 2$) to $u_1, \cdots, u_{k-1}$ and $u_k$ on any compact subset of $(M, g)$, where $j=1, \cdots, k$, i.e.,
$$\lim_{i\to\infty} \|u^i_j-u_j\|_{C^2(\mathcal{C})}=0\quad\quad j=1,\cdots, k$$
where $\mathcal{C}\in M$ is any compact subset. In fact, we can also use directly Cheng-Yau's local gradient estimate \cite{CY} to show the convergence of these sequences $\{u^i_j\}$ ($j=1, \cdots, k;\,\, i=1, 2, \cdots$).

Now, we claim that $u_1, \cdots, u_{k-1}$ and $u_k$ are linearly independent in the linear space spanned by bounded harmonic functions on $(M, g)$, i.e., $\dim(H^\infty(M))\geq k$.

In order to prove the above assertion, firstly we note that by maximum principle there hold true $0<u_j(x)<1$ for any $x\in M$, where $j=1, 2, \cdots, k$. Then, we let $i\to \infty$ in \eqref{first}and make use of a diagonal technique to obtain
\begin{equation}\label{finn}
\left(\int_{E_j}(1-u_j)^{\alpha\chi}\right)^{\frac{1}{\alpha\chi}}<\infty\quad\quad\mbox{and}\quad\quad
\left(\int_{E_l}u_j^{\alpha\chi}\right)^{\frac{1}{\alpha\chi}}<\infty\quad \mbox{for}\,\, l\neq j,
\end{equation}
since we have
$$\int_{M}|\Delta\phi_j|^\frac{\alpha\chi}{\alpha\chi-\alpha+1}<\infty.$$
Therefore, it is easy to verify from the above integral estimates that $u_1, \cdots, u_{k-1}$ and $u_k$ are not constant functions, since $\vol(E_j)=\infty$ for each $j=1, \cdots, k$.

The remaining is to show that $u_1, \cdots, u_{k-1}$ and $u_k$ are linearly independent. Since for any geodesic ball $B_r\subset M$ we have the volume estimate
$$\mathrm{Vol}\left(B_r\right)\geq C\left(\chi, \mathbb{S}_{\chi}(M)\right)r^{\frac{2\chi}{\chi-1}},$$
this tells us that $\mathrm{Vol}(B_r(x))$ is of a uniform lower bound. Hence, noting $\chi>1$ and $\alpha>1$ we have
$$\frac{1}{\mathrm{Vol}(B_r(x))}\int_{B_r(x)}u_j\leq \left(\frac{1}{\mathrm{Vol}(B_r(x))}\int_{B_r(x)}u_j^{\alpha\chi}\right)^\frac{1}{\alpha\chi}.$$
It follows from \eqref{finn} that for $B_r(x)\subset E_i$, where $r>0$ is a fixed number, and $j\neq i$ there hold true
$$\lim_{x\in E_i\to\infty}\mathrm{Vol}(B_r(x))^{-1}\int_{B_r(x)}u_j=\lim_{x\in E_i\to\infty}\mathrm{Vol}(B_r(x))^{-\frac{1}{\alpha\chi}} \left(\int_{B_r(x)}u_j^{\alpha\chi}\right)^\frac{1}{\alpha\chi}=0$$
and
$$\lim_{x\in E_j\to\infty}\mathrm{Vol}(B_r(x))^{-1}\int_{B_r(x)}(1-u_j)=\lim_{x\in E_j\to\infty}\mathrm{Vol}(B_r(x))^{-\frac{1}{\alpha\chi}} \left(\int_{B_r(x)}(1-u_j)^{\alpha\chi}\right)^\frac{1}{\alpha\chi}=0.$$

If there holds
$$\sum_{j=1}^ka_ju_j(x)\equiv 0\quad\mbox{on}\,\, M,$$
where $a_j$ ($j=1, 2, \cdots, k$) are some real numbers, then we have
$$\lim_{x\in E_1\to\infty}\mathrm{Vol}(B_r(x))^{-1}\int_{B_r(x)}\sum_{j=1}^ka_ju_j=0,$$
which implies
$$\lim_{x\in E_1\to\infty}\mathrm{Vol}(B_r(x))^{-1}\int_{B_r(x)}a_1u_1=0.$$
We conclude that $a_1=0$. Otherwise, we have
$$\lim_{x\in E_1\to\infty}\mathrm{Vol}(B_r(x))^{-1}\int_{B_r(x)}u_1=0.$$
On the other hand, by the integral estimates \eqref{finn} we have
$$\lim_{x\in E_1\to\infty}\mathrm{Vol}(B_r(x))^{-1}\int_{B_r(x)}(1-u_1)=0.$$
Hence, it follows
$$\lim_{x\in E_1\to\infty}\mathrm{Vol}(B_r(x))^{-1}\int_{B_r(x)}(1-u_1) + \lim_{x\in E_1\to\infty}\mathrm{Vol}(B_r(x))^{-1}\int_{B_r(x)} u_1=0.$$
However, we have that for any $B_r(x)\subset E_1$ there holds
$$1=\mathrm{Vol}(B_r(x))^{-1}\int_{B_r(x)}1=\mathrm{Vol}(B_r(x))^{-1}\int_{B_r(x)}\{(1-u_1) + u_1\}$$
which implies
$$\lim_{x\in E_1\to\infty}\mathrm{Vol}(B_r(x))^{-1}\int_{B_r(x)}(1-u_1) + \lim_{x\in E_1\to\infty}\mathrm{Vol}(B_r(x))^{-1}\int_{B_r(x)} u_1=1.$$
This is a contradiction. Similarly, we can also show that $a_2=0, \cdots, a_{k-1}=0$ and $a_k=0$. This indicates that $u_1, \cdots, u_{k-1}$ and $u_k$ are linearly independent, hence we have shown
$$\dim(H^\infty(M))\geq k.$$
Thus, we finish the proof.
\end{proof}

\begin{rem}
In fact, \thmref{thm5.2} can be extended to the following case $(M, g)$ enjoys a more general Sobolev inequality:
$$\mathbb{S}_{\chi}(M)\left(\int_M f^{2\chi}dv\right)^{\frac{1}{\chi}}\leq\int_M (|\nabla f|^2 + Rf^2) dv, \quad\mbox{for any}\,\, f\in C_0^\infty(M),$$
which appears in the geometry of submanifolds and Ricci solitons (see \cite{LW, MS}). Here $R(x)\geq 0$ is usually a smooth function related to the curvature. Obviously, it is easy to verify the inequality reduces to
$$\mathbb{C}_{\chi}(M)\left(\int_M f^{2\chi}dv\right)^{\frac{1}{\chi}}\leq\int_M |\nabla f|^2  dv, \quad\mbox{for any}\,\, f\in C_0^\infty(M),$$
provided $\|R\|_{L^{\frac{\chi}{\chi-1}}}<\mathbb{S}_{\chi}(M)$. Particularly, such inequality with $\chi=\frac{n}{n-2}$ holds true in a complete minimal hypersurface in an Euclidean space (see \cite{Bre23sobolev, MS}).
\end{rem}

\medskip
\noindent{\bf Proof of \thmref{main4}}: We prove this theorem by contradiction. If $(M, g)$ has at least two ends, then, by the assumption the $\frac{n}{n-2}$-type Sobolev constant $\mathbb{S}_{\frac{n}{n-2}}(M)$ is positive and \thmref{end} we conclude that there exists a nonconstant, bounded, and positive harmonic function on $(M, g)$ (see also Theorem B in \cite{W} and Corollary 4.3 in \cite{DW} for more details). However, Corollary \ref{main2} tells us that there exists a positive constant $C(n, \beta^*)$ depending on $n$ and $\beta^*$ such that, if
$$\|\ric_-\|_{L^{\frac{n}{2}}}\leq C(n,\beta^*)\mathbb{S}_{\frac{n}{n-2}}(M),$$
then there is no nonconstant, positive harmonic function on $(M, g)$. We obtain a contradiction. We complete the proof.

\begin{thm}\label{term}
Let $(M^n, g)$ be a complete noncompact Riemannian $n$-dimensional manifold with Sobolev constant $\mathbb{S}_{\chi}(M)>0$ where $1<\chi\leq \frac{n}{n-2}$. If $\vol(B_r(x_0))=O\left(r^{\frac{2\chi}{\chi-1}}\right)$ for some geodesic ball $B_r(x_0)\subset M$, then $M$ has only finitely many ends.

In particular, suppose $(M^n, g)$ supports a $\chi$-type Sobolev inequality for some $\chi\in(1,n/(n-2))$, and assume in addition that $\|\ric_-\|_{L^{\frac{\chi}{\chi-1}}}$ is bounded. Then $M$ has only finitely many ends.
\end{thm}

\begin{proof}
The basic idea of proof is contained in the arguments on Lemma 3.2 of \cite{W}. For the sake of completeness, here we give the details. First, for any $r>0$ large enough we pick a maximal family of points $x_i\in\partial B_r(x_0)$ such that $B_{\frac{r}{4}}(x_i)$ are disjoint. Then we have
$$\partial B_r(x_0)\subset\cup_{i=1}^q\partial B_{\frac{r}{2}}(x_i),$$
where $q = \#\{x_j\}$. In fact, if there exists some $\bar{x}\in \partial B_r(x_0)\setminus \cup_{i=1}^q B_{\frac{r}{2}}(x_i)$, then $B_{\frac{r}{4}}(\bar{x})$ is disjoint with $B_{\frac{r}{4}}(x_i)$ for $i = 1, \cdots, q$. This contradicts the fact that $\{x_i\}$ is a maximal family.

Obviously we have the following inequality:
$$\sum_{i=1}^q\vol(B_{\frac{r}{4}}(x_i))\leq\vol(B_{2r}(x_0)).$$
By the assumption of this theorem, we have
$$\vol(B_{2r}(x_0))\leq C(n)r^{\frac{2\chi}{\chi-1}}.$$
By \thmref{ve} we have
$$\vol(B_{\frac{r}{4}}(x_i)) \geq C\left(\chi, \mathbb{S}_{\chi}(M)\right)r^{\frac{2\chi}{\chi-1}},$$
Hence, combining the above inequalities, we obtain
$$qC\left(\chi, \mathbb{S}_{\chi}(M)\right)\leq C(n).$$
It follows that
$$q < \left[\frac{C(n)}{C\left(\chi, \mathbb{S}_{\chi}(M)\right)}\right] + 1\equiv\Lambda,$$
where $[C(n)/C\left(\chi, \mathbb{S}_{\chi}(M)\right)]$ is the integer part of $C(n)/C\left(\chi, \mathbb{S}_{\chi}(M)\right)$. This means that the counting number of the above maximal family denoted by $\#\{x_j\}$ is bounded by a constant depending only on $n$, $\chi$ and $\mathbb{S}_{\chi}(M)$.

We claim that $M$ has only finitely many ends denoted by $E_1, \cdots, E_{k-1}$ and $E_k$ and $k\leq \Lambda$. Otherwise, we choose $r_0>0$ such that
$$M \setminus B_{r_0}(x_0) = \cup^s_{j=1}E^{(r_0)}_j, \quad s \geq \Lambda +1$$
is the disjoint union of connected components.

Now choose points \(x_j'\in \partial B_{4r_0}(x_0)\cap E^{(r_0)}_j\) for
\(j=1,\dots,s\). It is then straightforward to see that the balls
\(B_{r_0}(x_j')\) \((j=1,\dots,s)\) are pairwise disjoint. Indeed, if
\(B_{r_0}(x_i')\cap B_{r_0}(x_j')\neq\varnothing\) for some \(i\neq j\), then
\(d(x_i',x_j')<2r_0\). On the other hand, since \(x_i'\) and \(x_j'\) lie in
different connected components, any minimizing geodesic joining them must
intersect \(\partial B_{r_0}(x_0)\). Hence $
d(x_i',x_j') \ge6r_0,
$
a contradiction. Therefore the family \(\{B_{r_0}(x_j')\}_{j=1}^s\) is disjoint,
which contradicts the fact that the cardinality of maximal disjoint
family is at most \(\Lambda\). We complete the proof.
\end{proof}
\medskip\medskip

{\bf Acknowledgments:} We would like to thank Professor Jun Yang for insightful discussions. Y. Wang is supported by National Natural Science Foundation of China (Grant No.12431003); G. Wei is supported by National Natural Science Foundation of China (Grants No.12101619 and 12141106).
\bigskip
	
\bibliographystyle{acm}%siam，plain, unsrt

\end{document}